\fi\pagestyle{plain}
\newtheorem{theorem}{Theorem}[section]
\newtheorem{lemma}[theorem]{Lemma}
\newtheorem{proposition}[theorem]{Proposition}
\newtheorem{corollary}[theorem]{Corollary}
{\theorembodyfont{\rmfamily}%
  \newtheorem{example}[theorem]{Example}
   }
\newenvironment{proof}{\noindent\textit{Proof.}}
{\QED\vskip\theorempostskipamount} 
\newenvironment{proofof}[1]{\noindent\textit{Proof
    \protect{#1}.}}
                       {\QED\vskip\theorempostskipamount}
\def\petitcarre{\vrule height4pt width 4pt depth0pt}
\def\QED{\relax\ifmmode\eqno{\hbox{\petitcarre}}\else{%
  \unskip\nobreak\hfil\penalty50\hskip2em\hbox{}\nobreak\hfil
  \petitcarre
  \parfillskip=0pt \finalhyphendemerits=0\par\smallskip}
  \fi}
\DeclareMathOperator{\Card}{Card}
\DeclareMathOperator{\ord}{ord}
\DeclareMathOperator{\End}{End}
\DeclareMathOperator{\Pal}{Pal}
\DeclareMathOperator{\rank}{rank}
\DeclareMathOperator{\Ker}{Ker}
\newcommand\A{\mathcal{A}}
\newcommand\B{\mathcal{B}}
\newcommand\T{\mathcal{T}}
\newcommand\RR{\mathcal{R}}
\newcommand\JJ{\mathcal{J}}
\newcommand\LL{\mathcal{L}}
\newcommand\HH{\mathcal{H}}
\newcommand\DD{\mathcal{D}}
\newcommand{\edge}[1]{\stackrel{#1}{\rightarrow}}
\newcommand\N{\mathbb {N}}
\newcommand\Z{\mathbb {Z}}
\newcommand{\F}{FG}
   \def\soft#1{\leavevmode\setbox0=\hbox{h}\dimen7=\ht0\advance
    \dimen7 by-1ex\relax\if t#1\relax\rlap{\raise.6\dimen7
    \hbox{\kern.3ex\char'47}}#1\relax\else\if T#1\relax
    \rlap{\raise.5\dimen7\hbox{\kern1.3ex\char'47}}#1\relax
    \else\if d#1\relax\rlap{\raise.5\dimen7\hbox{\kern.9ex
    \char'47}}#1\relax\else\if D#1\relax\rlap{\raise.5\dimen7
    \hbox{\kern1.4ex\char'47}}#1\relax\else\if l#1\relax
    \rlap{\raise.5\dimen7\hbox{\kern.4ex\char'47}}#1\relax
    \else\if L#1\relax\rlap{\raise.5\dimen7\hbox{\kern.7ex
    \char'47}}#1\relax\else\message{accent \string\soft
    \space #1 not defined!}#1\relax\fi\fi\fi\fi\fi\fi} 
\title{Profinite semigroups}
\author{Revekka Kyriakoglou, Dominique Perrin\\
Universit\'e Paris-Est, LIGM}
\begin{document}
\maketitle
\begin{abstract}
We present a survey of results on profinite semigroups and their
link with symbolic dynamics. We develop a series of results, mostly
due to Almeida and Costa and we also include some original results
on the Sch\"utzenberger groups associated to a uniformly recurrent set.
\end{abstract}

\tableofcontents

\section{Introduction}
\label{sec:intro}
The theory of profinite groups originates in the theory
of infinite Galois groups and of $p$-adic analysis
(see~\cite{RibesZalesskii2010}).
The corresponding theory for semigroups
 was considerably developed by Jorge Almeida (see~\cite{Almeida2005}
for an introduction). The initial motivation has been
the theory of varieties of languages and semigroups, put in correspondance
by Eilenberg's theorem. Later, Almeida has initiated the study of
the connexion
between free profinite semigroups and 
symbolic dynamics (see~\cite{Almeida2002}). He has shown 
in~\cite{Almeida2005b} that
minimal subshifts correspond to maximal $\JJ$-classes of the free
profinite monoid. Moreover, the Sch\"utzenberger group of such a $\JJ$-class
is a dynamical invariant of the subshift~\cite{AlmeidaCosta2013}.
Finally, it is shown in~\cite{AlmeidaCosta2015} that if a minimal
system satisfies the tree condition (as defined in~\cite{BertheDeFeliceDolceLeroyPerrinReutenauerRindone2013a}), the 
corresponding group is a free profinite group.

In these notes, we give a gentle introduction to the notions
used in profinite algebra and develop the link with 
minimal sets.

Our motivation to explore the profinite world is the following.
We are interested in the  situation where
we fix a  uniformly recurrent set $F$ on an alphabet $A$
(in general a non rational set, like the set of factors of the Fibonacci
word). We want to study  sets of the form
$F\cap L$ where $L$ is a rational set on $A$, that is the inverse
image through a morphism
$\varphi:A^*\rightarrow M$ of a subset of a finite monoid $M$.
 The aim is thus to develop
a theory of automata observing through the filter of
a non rational set. 

We are particularly interested in sets
$L$ of the form
\begin{enumerate}
\item $L=wA^*w$ for some word $w\in F$ (linked to the complete return words to $w$)
\item $L=h^{-1}(H)$ where $h:A^*\rightarrow G$ is a morphism
onto a finite group $G$ and $H$ is a subgroup of $G$.
\end{enumerate}
This question has been studied successfully in a number of 
cases starting with a Sturmian set $F$ in~\cite{BerstelDeFelicePerrinReutenauerRindone2012} and progessively generalizing to sets called
tree sets in~\cite{BertheDeFeliceDolceLeroyPerrinReutenauerRindone2013a}.

The framework of profinite semigroups allows one to work simultaneously
with all rational sets $L$. This is handled, as we shall see, both
through the definition of an inverse limit and through a topology
on words. This topology is defined by introducing a distance on words:
two words are close if one needs a morphism $\varphi$ on a large monoid $M$
to distiguish them. For example, for any word $x$, the powers $x^{n!}$ of $x$
will not be distinguished by a monoid with less than $n$ elements.
Thus one can consider a limit, called a pseudoword, and denoted
$x^\omega$ which has the same image by all morphisms $\varphi$ onto a finite
monoid $M$.

The moment of inspiration was the derivation
by Ameida and Costa \cite{AlmeidaCosta2015} of a result
on profinite groups (the Sch\"utzenberger group of a tree set is free)
which uses the main result of~\cite{BertheDeFeliceDolceLeroyPerrinReutenauerRindone2013a}. This result gives a global version
of properties like the Finite Index Basis Property, as defined
in~\cite{BertheDeFeliceDolceLeroyPerrinReutenauerRindone2015}.
One of the goals of this paper is to develop this connexion.

We begin with two motivating examples: the first one
concerns $p$-adic and profinite numbers (Section~\ref{sectionpAdic})
and the second one 
the Fibonacci morphism (Section~\ref{sectionFibonacci}).

We give in Section~\ref{sectionProfinite}
an introduction to the basic notions concerning
profinite semigroups. We have chosen a simplified presentation
which uses the class of all semigroups instead of working
inside a pseudovariety of semigroups. It simplifies the statements
but the proofs work essentially in the same way.

In Section~\ref{sectionProfiniteCodes} we describe results concerning
codes in profinite semigroups. The main result, 
from~\cite{MargolisSapirWeil1998}, is that any
finite code
is a profinite code (Theorem~\ref{theoremCodeProfinite}).

In Section~\ref{factorialSets} we introduce uniformly recurrent pseudowords. We
prove a result of Almeida (Theorem~\ref{theoremUniformRecurrence}) showing that uniformly
recurrent pseudowords can be characterized in algebraic terms,
as the $\JJ$-maximal elements of the free profinite monoid.

In Section~\ref{sectionSturmian}, we recall some basic properties of Sturmian
sets and their generalization, the tree sets, introduced in~\cite{BertheDeFeliceDolceLeroyPerrinReutenauerRindone2013a}.

In Section~\ref{sectionReturn},
we prove several results due
to Costa and Almeida concening the presentation of the Sch\"utzenberger group of
a uniformly recurrent set.

In Section~\ref{sectionSchutzenbergerGroups}, we prove a new result
(Thorem~\ref{newTheorem}) concerning the Sch\"utzenberger groups 
of uniformly recurrent sets.
\paragraph{Acknowledgements}
We would like to thank Jorge Almeida and Alfredo Costa for their
help in the preparation of this manuscript. It was written in connexion
with a workshop held in Marne la Vall\'ee on january 20-22 2016 and gathering
around them 
Marie-Pierre B\'eal, Val\'erie Berth\'e, Francesco Dolce, Pavel Heller,
Julien Leroy, Jean-Eric Pin and the autors.

\section{$p$-adic numbers}\label{sectionpAdic}
We begin with a motivating example (see~\cite{Koblitz1984}
for an introduction to this subject). Let $p$ be a prime
number and let $\Z_p$ denote the ring of $p$-adic integers,
namely the completion of $\Z$ under the $p$-adic metric.
This metric is defined by the norm
\begin{displaymath}
|x|_p=\begin{cases}p^{-\ord_p(x)}&\text{if $x\ne 0$}\\0&\text{otherwise}
\end{cases}
\end{displaymath}
where $\ord(x)_p(x)$ is the largest $n$ such that $p^n$ divides $x$.
For the topology defined by this metric, $\Z_p$ is compact.

Any element  $\gamma\in\Z_p$  has a unique $p$-adic expansion
\begin{displaymath}
\gamma=c_0+c_1p+c_2p^2+\ldots=(\ldots c_3c_2c_1c_0)_p
\end{displaymath}
Note that infinite expansions may represent ordinary integers.
For example
\begin{displaymath}
(\ldots 111)_2=-1. 
\end{displaymath}
We can express the expansion of the elements in $\Z_p$
as
\begin{eqnarray*}
\Z_p&=&\lim\Z/p^n\Z\\
&=&\{(a_n)_{n\ge 1}\in\prod_{n\ge 1}\Z/p^n\Z\mid \text{for all } n,a_{n+1}\equiv a_n
\bmod p^n\}
\end{eqnarray*}
This expresses the ring $\Z_p$ as a projective limit of the rings $\Z/p^n\Z$.
The direct product of all rings $\Z_p$ 
\begin{displaymath}
\hat{\Z}=\prod_p\Z_p
\end{displaymath}
over all prime numbers $p$ is the ring of \emph{profinite integers}\index{profinite!integer}.
Its topolology is that induced by the product. As a product of compact spaces,
$\hat{\Z}$ is itself compact. It does not depend on a particular number $p$
and shares with all rings $\Z_p$ the property of being a compact topological
space. Thus it is a compact covering of all rings $\Z_p$.

One may define it equivalently as the projective limit of all cyclic groups
\begin{eqnarray*}
\hat{\Z}&=&\prod_{n\ge 1}\Z/n\Z\\
&=&\{(a_n)_{n\ge 1}\in\prod_{n\ge 1}\Z/n\Z\mid \text{for all } n|m, a_m\equiv a_n
\bmod n\}
\end{eqnarray*}
or as the projective limit of the cyclic groups $\Z/n!\Z$
\begin{eqnarray*}
\hat{\Z}&=&\prod_{n\ge 1}\Z/n!\Z\\
&=&\{(a_n)_{n\ge 1}\in\prod_{n\ge 1}\Z/n!\Z\mid \text{for all } n, a_{n+1}\equiv a_n
\bmod n!\}
\end{eqnarray*}
The last representation corresponds to an expansion of the form
\begin{displaymath}
\gamma=c_1+c_22!+c_33!+\ldots=(\ldots c_3c_2c_1)_!
\end{displaymath}
with digits $0\le c_i\le i$. This expansion forms the \emph{factorial number
system}\index{factorial number system} (see~\cite{Knuth1998}). 

Note that this time
\begin{displaymath}
-1=(\ldots 321)_!
\end{displaymath}
which holds because $1+2.2!+\ldots+n.n!=(n+1)!-1$, as one may verify by induction on $n$. The profinite topology on $\hat{\Z}$ can also be defined directly
by the norm $|x|_!=2^{-r(x)}$ where $r(x)$ is the largest $n$ such that
$n!$ divides $x$. A sequence converges with respect to this topology if the expansions converge in the usual sense, that is the number of equal digits,
starting from
the right, tends to infinity.

It is possible to define profinite Fibonacci numbers
(see~\cite{Lenstra2005}). Indeed, 
 Fibonacci numbers are
defined by $F_0=0$, $F_1=1$ and $F_n=F_{n-1}+F_{n-2}$. The definition can be extended to negative $n$ by $F_n=(-1)^{n-1}F_{-n}$. Then one may extend the
function $n\mapsto F_n$  to a continuous function from $\hat{\Z}$ into
itself. 
We note that since $n!$ tends to $0$ in $\hat{\Z}$, the
sequence $F_{n!}$ tends to $F_0=0$. Similarly $F_ {n!+2}$ 
and $F_{n!+1}$ tend to $1$. We come back to this in the next section.
\section{The Fibonacci morphism}\label{sectionFibonacci}
In the last example, we have seen how a linear recurrent sequence
(the Fibonacci numbers) can interestingly be extended to a topological
limit. As well known, Fibonacci numbers are the lengths
of the Fibonacci words defined inductively by $w_0=b$,
$w_1=a$ and satisfying the recurrence relation
$w_{n+1}=w_n+w_{n-1}$ for $n\ge 1$. Then $|w_n|=F_{n+1}$.
In the same way as one may embed the ring of ordinary integers into the compact
ring of profinite integers, we will see that one may extend the Fibonacci
sequence to a converging sequence of pseudowords whose lengths are profinite
integers.

The \emph{Fibonacci morphism}\index{Fibonacci!morphism} is the morphism $\varphi:A^*\rightarrow A^*$
with $A=\{a,b\}$ defined by $\varphi(a)=ab$ and $\varphi(b)=a$.
The sequence of $\varphi^n(a)$
\begin{eqnarray*}
\varphi(a)&=&ab\\
\varphi^2(a)&=&aba\\
\varphi^3(a)&=&abaab\\
\varphi^4(a)&=&abaababa\\
&\cdots&
\end{eqnarray*}
is  the Fibonacci sequence of words of length equal to the Fibonacci numbers.
 One has $F_n=|\varphi^{n-2}(a)|$ for $n\ge 2$ (and for $n\in \Z$
with appropriate extensions).

The Fibonacci sequence of words converges in the space $A^\N$ to the
Fibonacci infinite word
\begin{displaymath}
x=abaababa\cdots
\end{displaymath}
It is a fixed-point of $\varphi$ in the sense that $\varphi(x)=x$.
From another point of view, this sequence is not convergent. 
Indeed, the terms of the sequence end alternately with $a$ or $b$
and thus can be distinguished by a morphism from $A^*$ into
a monoid with $3$ elements. A sequence converging in this
stronger sense is $\varphi^{n!}(a)$
\begin{eqnarray*}
\varphi(a)&=&ab\\
\varphi^2(a)&=&aba\\
\varphi^6(a)&=&abaababaabaababaababa\\
\varphi^{24}(a)&=&abaababa\cdots abaababa\\
&\cdots&
\end{eqnarray*}
The limit in the sense of profinite topology, to be defined below,
is a pseudoword denoted $\varphi^\omega(a)$ which begins
by the Fibonacci infinite word $(\varphi^n(a))_{n\ge 0}$ and ends with the left infinite word 
$(\varphi^{2n}(a))_{n\ge 0}$.

Its length  in $\hat{\Z}$ is the limit
of the Fibonacci numbers $F_{n!+2}$ which is $F_2=1$. 
We shall come back to this interesting property in Example~\ref{exampleFibonacciPseudo}.

\section{Topological spaces, groups and semigroups}
In this section, we  recall the basic definitions of topology
and the notion of topological semigroup.

\subsection{Topological spaces}
We begin with an introduction to
the basic notions of topology (see~\cite{Willard2004} for example).
A \emph{topological space}\index{topological!space}
\index{space!topological}
 is a set $S$ with a family $\cal F$ of subsets 
such that
\begin{enumerate}
\item[(i)] it contains $\emptyset$ and $S$,
\item[(ii)] it is closed under union,
\item[(iii)] it is closed under finite intersection
\end{enumerate}
The elements of $\cal F$ are called 
\emph{open} sets\index{open set}\index{set!open}.
The complement of an open set is called a 
\emph{closed} set\index{closed set}\index{set!closed}.
 A \emph{clopen}
set\index{clopen!set}\index{set!clopen} is both open and closed.

For any set $S$, the \emph{discrete topology}\index{discrete topology}
\index{topology!discrete} is the topology for which all subsets are open.

For any subset $X$ of a topological set $S$, the topology \emph{induced}
\index{induced topology}\index{topology!induced}
on $X$ by the topology of $S$ corresponds to the family of open sets
$X\cap Y$ for $Y$ open in $S$.

A \emph{limit point}\index{limit point} of a subset $X$ of a topological space $S$
is a point $x$ such that any open set containing $x$ contains a
point of $X$ distinct of $x$. As a particlular case,
it is said to be an \emph{accumulation}
point \index{accumulation point}
if every open set containing $x$ contains an infinite
number of points of $X$.

The \emph{closure}\index{closure} of a subset $X$ of $S$, denoted $\Bar{X}$
 is the union of $X$ and
all its limit points. It is also the smallest closed set
containing $X$. A set is closed if and only if $X=\bar{X}$.
The set $X$ is \emph{dense}\index{dense} in $S$ is $\bar{X}=S$.

The notion of limit point can also be defined for a sequence. A point
$x$ is said to be a \emph{limit point}\index{limit point} of the sequence $(x_n)_{n\ge 0}$
if any open set containing $x$ contains all but a finite number
of terms of the sequence. It is said to be an \emph{accumulation point}\index{accumulation point}
of the sequence if every open set containing $x$ contains
an infinite number of elements of the sequence.
A limit point of the sequence $(x_n)_{n\ge 0}$
is a limit point of the set $\{x_n\mid n\ge 0\}$.


A map $\varphi:X\rightarrow Y$ between topological spaces $X,Y$ is
\emph{continuous}\index{continuous map} if for any open set $U\subset Y$, the set
$\varphi^{-1}(U)$ is open in $X$.

A \emph{basis}\index{open set!basis of} of the family of open sets is a family $\cal B$ of sets
such that any open set is a union of elements of $\cal B$.

Given a family of topological spaces $X_i$ indexed by a set $I$,
the \emph{product topology}\index{product topology}\index{topology!product} on the
direct product $X=\prod_{i\in I}X_i$ is defined as the coarsest
topology such that the projections $\pi_i:X\rightarrow X_i$
are continuous. A basis of the family of open sets is the 
family of \emph{open boxes}\index{open box}\index{box!open}, that is  sets of the form $\prod_{i\in I}U_i$
where the $U_i$ are open sets such that
$U_i\ne X_i$ only for a finite number of indices $i$.

\begin{example}
The set $A^\N$ of infinite words over $A$ is a topological space
for the discrete topology on $A$. It is not a semigroup
(a big motivation for introducing pseudowords!). However, there
is a well defined product of finite words and infinite words.
The open sets are
the sets of the form $XA^\N$ for $X\subset A^*$.
\end{example}

Metric spaces form a vast family of topological spaces.
A \emph{metric space}\index{metric space}\index{space!metric} is a space $S$ with a function $d:S\times S\rightarrow
\mathbb R$, called a \emph{distance}\index{distance}, such that for all $x,y,z\in S$,
\begin{enumerate}
\item[(i)] $d(x,y)=0$ if and only if $x=y$,
\item[(ii)] $d(x,y)=d(y,x)$
\item[(iii)] $d(x,z)\le d(x,y)+d(y,z)$ (triangle inequality).
\end{enumerate}
Any metric space can be considered as a topological space, considering
as open sets the unions of \emph{open balls}\index{open ball}
 $B_\varepsilon(x)=\{y\in S\mid d(x,y)<\varepsilon\}$ for $x\in S$ and $\varepsilon\ge 0$.

For example, the set ${\mathbb R}^n$ is a metric space for the Euclidean distance.

A topological space is a \emph{Hausdorff space}\index{Hausdorff space}\index{space!Hausdorff} if
any two distinct points belong to disjoint open sets.
A metric space is a Hausdorff space. In a Hausdorff space, every limit
point of a set is an accumulation point.

A topological space is \emph{compact}\index{compact space}\index{space!compact} if it is a Hausdorff space and if
from any family of open sets whose union is $S$, one may extract
a finite subfamily with the same property.

A closed subset of a compact space is compact and, by Tychononoff's theorem,
any product of compact spaces
is compact.

One may verify that in a compact space every infinite set
has an accumulation point (the converse is true in metric spaces).
Indeed, if $X$ is an infinite subset of a compact space $S$
without accumulation point,
there is for every $x\in S$ an open set containing $x$ which contains only
a finite number of elements of $X$. For a finite set $F\subset X$,
denote by $O_F$ the union of the $O_x$ such that $O_x\cap X=F$.
Then the sets $O_F$ form a family of open sets whose union is $S$
every finite subfamily of which intersects at most finitely
many points in $X$. Thus no finite subfamily may cover $S$, a contradiction.

The clopen sets in a product of compact spaces $S_i$ are the
finite unions of \emph{clopen boxes}\index{clopen!box}\index{box!clopen},
that is the sets of the form $\Pi_{i\in I}K_i$ where each $K_i$
is clopen in $S_i$ and $K_i=S_i$ for all but a finite number
of indices $i$.

\subsection{Topological semigroups}
A \emph{semigroup}\index{semigroup} is a set with an associative operation.
A \emph{monoid}\index{monoid} is a semigroup with a neutral element.

A \emph{topological semigroup}\index{topological!semigroup}\index{semigroup!topological} is a semigroup $S$ endowed with a topology
such that the semigroup operation $S\times S\rightarrow S$ is
continuous. A topological monoid is a topological semigroup with
identity.
\begin{example}
A finite semigroup can always be viewed as a topological semigroup
under the discrete topology.
\end{example}
\begin{example}
As a less trivial example, the set $\mathbb R$ of nonnegative real numbers is a topological semigroup for the addition and the interval $[0,1]$ is a topological
semigroup for the multiplication.
\end{example}
A compact monoid is a topological monoid which is compact (as a topogical
space). Note that we assume a compact space to satisfy Hausdorff separation axiom
(any two distinct points belong to disjoint open sets).
Note also the following elementary property of compact monoids.
Recall that, is a monoid $M$, $u\in M$ is a \emph{factor}\index{factor}
of $v\in M$ if $v\in MuM$.


\subsection{Topological groups}
A \emph{topological group}\index{topological!group}\index{group!topological}
 is a group with a topology such that the multiplication
and taking the inverse are continuous operations. It is in particular
a topological semigroup.

\begin{example}
The set $\mathbb R$ of real numbers
with the usual topology is a topological group under addition.
\end{example}

Any closed subgroup is a topological group for the induced topology.
Moreover, since multiplication is continuous, the cosets 
$Hg$ of an open (resp. closed) subgroup are open (resp. closed).

Every open subgroup $H$ of a topological group $G$ is also closed
since its complement is the union of all cosets $Hg$ for $g\in G\setminus H$
which are open.

The following is~\cite[Lemma 2.1.2]{RibesZalesskii2010}.
\begin{proposition}
In a compact group, a subgroup is open if and only if it is
closed and of finite index.
\end{proposition}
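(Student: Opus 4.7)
The plan is to prove both directions using only the elementary fact already noted in the excerpt, namely that in a topological group the cosets $Hg$ of an open (resp.\ closed) subgroup are open (resp.\ closed), since right translation $x\mapsto xg$ is a homeomorphism.

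For the forward direction, I would assume $H$ is open. The paper already observes that an open subgroup is automatically closed, because its complement $G\setminus H$ is a union of cosets $Hg$ with $g\notin H$, each of which is open. To get finite index, I would consider the family of all cosets of $H$. Each coset is open, so they form an open cover of $G$. Since $G$ is compact, this cover admits a finite subcover. But distinct cosets are disjoint, so every coset must appear in the subcover, meaning there are only finitely many cosets. Hence $[G:H]<\infty$.

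For the converse, assume $H$ is closed and of finite index. Let $g_1,\dots,g_n$ be coset representatives with $g_1=e$, so that $G = \bigsqcup_{i=1}^n Hg_i$. Each $Hg_i$ is closed because $H$ is closed and right translation is a homeomorphism. Therefore $G\setminus H = \bigcup_{i=2}^n Hg_i$ is a finite union of closed sets, hence closed, and consequently $H$ is open.

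There is no real obstacle here: both implications reduce to the observation that in a topological group every coset is a translate of $H$ under a homeomorphism, combined with compactness (for the forward direction) or finiteness of the complement decomposition (for the converse). The only thing to be slightly careful about is invoking the Hausdorff separation axiom implicit in the paper's convention on compactness, which is not actually needed for this particular argument.
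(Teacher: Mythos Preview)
Your proof is correct and follows essentially the same route as the paper: both directions use that cosets of an open (resp.\ closed) subgroup are open (resp.\ closed), with compactness yielding a finite subcover of cosets for the forward direction, and the complement being a finite union of closed cosets for the converse. Your version is slightly more explicit (noting that disjointness of cosets forces the finite subcover to contain all of them), but the argument is the same.
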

\begin{proof}
Assume that $H$ is an open subgroup of $G$. We have already
seen that $H$ is also closed.  The union
of the cosets of $H$ form a covering by open sets. Since $G$
is compact, there is a finite subfamily covering $G$ and thus 
$H$ has finite index. 

Conversely, if $H$ is a closed subgroup of finite index, then the complement
of $H$ is the union of the cosets $Hg$ for $g\notin H$ and thus
$H$ is open.
\end{proof}
\section{Profinite semigroups}\label{sectionProfinite}
In this section, we introduce the notions of profinite semigroup
and of profinite group. We begin with the notion of projective limit.
\subsection{Projective limits}
We want to define profinite semigroups as some kind of limit of finite
semigroups in such a way that properties true in all finite semigroups
will remain true in profinite semigroups. For this we need the notion
of projective limit.

A \emph{projective system}\index{projective!system}
(or \emph{inverse system}\index{inverse!system})
of semigroups  is given by 
\begin{enumerate}
\item[(i)] a directed set $I$, that is a poset in which any two elements
have a common upper bound,
\item[(ii)] for each $i\in I$, a topological
semigroup $S_i$,

\item[(iii)] for each pair $i,j\in I$ with $i\ge j$, a \emph{connecting morphism}\index{morphism!connecting} $\psi_{i,j}:S_i\rightarrow S_j$ such that $\psi_{i,i}$ is the identity
on $S_i$ and for $i\ge j\ge k$, $\psi_{i,k}=\psi_{i,j}\circ\psi_{j,k}$.
\end{enumerate}
\begin{example}\label{exampleCyclic}
Let $I$ be the set of natural integers ordered by divisibility: $n\ge m$
if $m|n$.
The family of cyclic groups $(\Z/n\Z)_{n\in I}$ 
forms a projective system for the morphisms $\psi_{n,m}$ defined
by $\psi_{n,m}(x)=x\bmod m$. 

In the same way, the family of cyclic groups $(\Z/n!\Z)_{n\ge 0}$
indexed by the set $I$ of natural integers with the natural order
is a projective system. 
\end{example}

The \emph{projective limit}\index{projective!limit}
(or \emph{inverse limit}\index{inverse!limit}) of this projective system is a topological
semigroup  $S$ 
together with  morphisms $\Phi_i: S\rightarrow S_i$
such that  for all $i,j\in I$ with
$i\ge j$, $\psi_{i,j}\circ \Phi_i=\Phi_j$, and
 for any topological semigroup  $T$
and  morphisms $\Psi_i: T\rightarrow S_i$
such that for all $i,j\in I$ with
$i\ge j$, $\psi_{i,j}\circ \Psi_i=\Psi_j$, there exists a morphism
$\theta:T\rightarrow S$ such that $\Phi_i\circ \theta=\Psi_i$
for all $i\in I$. 
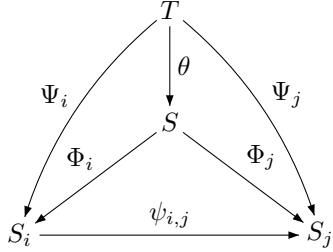
\begin{figure}[hbt]
\centering
\gasset{Nframe=n,Nadjust=wh}
\begin{picture}(40,45)
\node(phi_i)(0,0){$S_i$}
\node(Phi)(20,15){$S$}
\node(Psi)(20,30){$T$}
\node(phi_j)(40,0){$S_j$}

\drawedge(phi_i,phi_j){$\psi_{i,j}$}
\drawedge[ELside=r](Phi,phi_i){$\Phi_i$}
\drawedge(Phi,phi_j){$\Phi_j$}
\drawedge(Psi,Phi){$\theta$}
\drawedge[curvedepth=-3,ELside=r](Psi,phi_i){$\Psi_i$}
\drawedge[curvedepth=3](Psi,phi_j){$\Psi_j$}
\end{picture}
\caption{The projective limit.}
\end{figure}

The uniqueness of the projective limit can be verified (``as a standard diagram
chasing exercise''~\cite{Almeida2005}). The existence can be proved by considering the subsemigroup $S$ of the product $\prod_{i\in I}S_i$ consisting of all
$(s_i)_{i\in I}$ such that, for all $i,j\in I$ with $i\ge j$,
\begin{displaymath}
\psi_{i,j}(s_i)=s_j
\end{displaymath}
endowed with the product topology. The maps $\Phi_i:S\rightarrow S_i$
 are the projections, that is, if $s=(s_i)_{i\in I}$, then $\Phi_i(s)=s_i$. 

One defines in the same way a projective system of monoids or groups and
a projective limit of monoids or groups. For a projective system
of monoids, one has to take all morphisms as monoid morphisms
and similarly for groups (actually a monoid morphism between
groups is already a group morphism).
\begin{example}
The projective limit of the family of cyclic groups (Example~\ref{exampleCyclic})
is the group of profinite integers.
\end{example}
A variant of this construction allows to specify a fixed generating
set for all semigroups.
An $A$-\emph{generated topological semigroup}
\index{topological!$A$-generated semigroup} is a 
topological semigroup $S$ together with a
mapping $\varphi:A\rightarrow S$
 whose image generates a subsemigroup dense
in $S$. A morphism between $A$-generated topological semigroups
$\varphi:A\rightarrow S$ and $\psi:A\rightarrow T$ is a 
continuous morphism $\theta:S\rightarrow T$ such that $\theta\circ\varphi=\psi$.
We denote $\theta:\varphi\rightarrow \psi$ such a morphism.
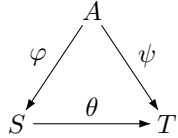
\begin{figure}[hbt]
\centering
\gasset{Nframe=n,Nadjust=wh}
\begin{picture}(20,15)
\node(S)(0,0){$S$}\node(T)(20,0){$T$}
\node(A)(10,15){$A$}

\drawedge[ELside=r](A,S){$\varphi$}
\drawedge(A,T){$\psi$}
\drawedge(S,T){$\theta$}
\end{picture}
\caption{A morphism of $A$-generated semigroups}
\end{figure}

A projective system
in this category of objects is given by 
 a directed set $I$ and
 for each $i\in I$, an $A$-generated topological
semigroup $\varphi_i:A\rightarrow S_i$,
and for each pair $i,j\in I$ with $i\ge j$, a connecting morphism
 $\psi_{i,j}:\varphi_i\rightarrow \varphi_j$ such that $\psi_{i,i}$ is the identity
on $S_i$ and for $i\ge j\ge k$, $\psi_{i,k}=\psi_{i,j}\circ\psi_{j,k}$.

The projective limit
of this projective system is an
$A$-generated topological
semigroup  $\Phi:A\rightarrow S$ 
together with  morphisms $\Phi_i: \Phi\rightarrow \varphi_i$
such that  for all $i,j\in I$ with
$i\ge j$, $\psi_{i,j}\circ \Phi_i=\Phi_j$, and
 for any $A$-generated topological semigroup  $\Psi:A\rightarrow T$
and  morphisms $\Psi_i: \Psi\rightarrow \varphi_i$
such that for all $i,j\in I$ with
$i\ge j$, $\psi_{i,j}\circ \Psi_i=\Psi_j$,  there is a morphism
$\theta:\Psi\rightarrow \Phi$ such that $\Phi_i\circ \theta=\Psi_i$
for all $i\in I$.

\begin{figure}[hbt]
\centering
\gasset{Nframe=n,Nadjust=wh}
\begin{picture}(80,45)
\put(0,0){
\node(phi_i)(0,0){$\varphi_i$}
\node(Phi)(20,15){$\Phi$}
\node(Psi)(20,30){$\Psi$}
\node(phi_j)(40,0){$\varphi_j$}

\drawedge(phi_i,phi_j){$\psi_{i,j}$}
\drawedge[ELside=r](Phi,phi_i){$\Phi_i$}
\drawedge(Phi,phi_j){$\Phi_j$}
\drawedge(Psi,Phi){$\theta$}
\drawedge[curvedepth=-3,ELside=r](Psi,phi_i){$\Psi_i$}
\drawedge[curvedepth=3](Psi,phi_j){$\Psi_j$}
}
\put(50,0){
\node(S_i)(0,0){$S_i$}
\node(S)(20,15){$S$}
\node(T)(20,30){$T$}
\node(S_j)(40,0){$S_j$}
\node(A)(20,45){$A$}

\drawedge(S_i,S_j){$\psi_{i,j}$}
\drawedge[ELside=r](S,S_i){$\Phi_i$}
\drawedge(S,S_j){$\Phi_j$}
\drawedge(T,S){$\theta$}
\drawedge[curvedepth=-3,ELside=r](T,S_i){$\Psi_i$}
\drawedge[curvedepth=3](T,S_j){$\Psi_j$}
\drawedge(A,T){$\Psi$}
\drawedge[curvedepth=-5,ELside=r](A,S_i){$\varphi_i$}
\drawedge[curvedepth=5](A,S_j){$\varphi_j$}
}
\end{picture}

\caption{The projective limit of a family of $A$-generated semigroups.}
\end{figure}
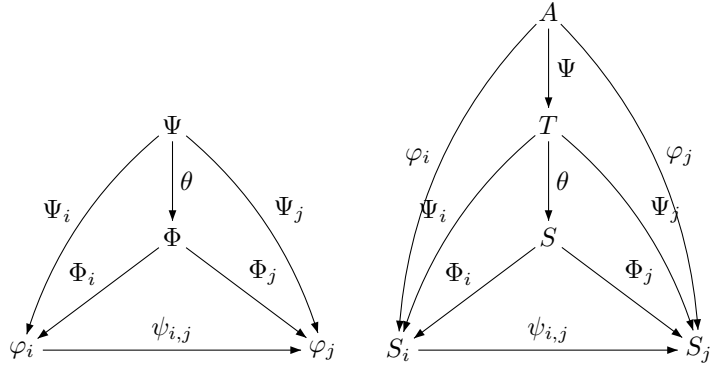

\subsection{Profinite semigroups}
A \emph{profinite semigroup}\index{profinite!semigroup}\index{semigroup!profinite} is a projective limit of a projective system
of finite semigroups. 
\begin{example}
The ring of profinite integers is a profinite group.
\end{example}
A profinite semigroup is compact. Indeed, let $S$ be the
projective limit of a family $S_i$ of finite semigroups.
Then $S$ is a closed submonoid of a direct product of the finite
and thus
compact semigroups $S_i$ and thus is compact.

A topological space is \emph{connected}\index{connected space}\index{space!connected} if it is not the union of two
disjoint open sets. A subset of a topological space is connected if
it is connected as a subspace, that is it cannot be covered by the
union of two disjoint open sets. Every topological space decomposes as
a union of disjoint connected subsets, called its \emph{conneted components}.

A topological space is
\begin{enumerate}
\item[(i)] 
\emph{totally disconnected}\index{totally disconnected}\index{space!totally disconnected}
if its connected components are singletons, 
\item[(iii)] \emph{zero-dimensional}\index{zero-dimensional}\index{space!zero-dimensional} if it admits a basis consisting of clopen
sets. 
\end{enumerate}
The term zero-dimensional is by reference to a notion of dimension in topological spaces (the Lebesgue dimension).
The following result, from~\cite{Almeida2005} gives
a possible direct definition of profinite semigroup without
using projective limits. A topological semigroup is
\emph{residually finite}\index{residually finite!semigroup}\index{semigroup!residually finite} if for any $u,v\in S$ there
exists a continuous morphism $\varphi:S\rightarrow M$ into
a finite semigroup $M$ such that $\varphi(u)\ne\varphi(v)$.

\begin{theorem}\label{theoremProfiniteSemigroups}
The following conditions are equivalent for a compact semigroup $S$.
\begin{enumerate}
\item[\rm (i)] $S$ is profinite,
\item[\rm (ii)] $S$ is residually finite as a topological semigroup,
\item[\rm (iii)] $S$ is a closed subsemigroup of a direct
 product of finite semigroups,
\item[\rm (iv)] $S$ is totally disconnected,
\item[\rm (v)] $S$ is zero-dimensional.
\end{enumerate}
\end{theorem}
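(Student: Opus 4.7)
The plan is to establish the cycle (i) $\Rightarrow$ (iii) $\Rightarrow$ (ii) $\Rightarrow$ (i) and then link the remaining conditions by (i) $\Rightarrow$ (iv) $\Leftrightarrow$ (v) $\Rightarrow$ (ii). The implication (v) $\Rightarrow$ (ii) is the only step with real content; the rest are either structural unwinding of definitions or classical facts about compact Hausdorff spaces.

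The easy steps come first. For (i) $\Rightarrow$ (iii) I simply invoke the construction of the projective limit as the closed subsemigroup of $\prod_{i\in I}S_i$ cut out by the compatibility equations $\psi_{i,j}(s_i)=s_j$. For (iii) $\Rightarrow$ (ii), given distinct $u,v$ in a closed subsemigroup of $\prod_i M_i$ with each $M_i$ finite, some projection $\pi_i$ to a finite factor separates them and, restricted to $S$, is a continuous morphism onto a finite semigroup. For (i) $\Rightarrow$ (iv) I use that each finite discrete $S_i$ is totally disconnected, that products of totally disconnected spaces are totally disconnected (any connected subset projects to a point in each coordinate), and that this property passes to closed subspaces. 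The equivalence (iv) $\Leftrightarrow$ (v) is the classical fact that in a compact Hausdorff space the quasi-component of a point (intersection of all clopen neighborhoods) coincides with its connected component, so singleton components force a basis of clopen sets and vice versa.

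For (ii) $\Rightarrow$ (i), I order by refinement the set $\mathcal{R}$ of closed congruences $\theta$ on $S$ of finite index; it is directed because the intersection of two such congruences is again one. The quotients $S/\theta$ form a projective system of finite semigroups under the natural connecting morphisms. Residual finiteness guarantees that the canonical continuous morphism $S\to \varprojlim_{\theta\in\mathcal{R}} S/\theta$ is injective, and compactness of $S$ together with density of its image makes it a topological isomorphism, identifying $S$ with the projective limit.

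The main obstacle is (v) $\Rightarrow$ (ii). Given $u\neq v$ in $S$, pick a clopen set $U\subset S$ with $u\in U$, $v\notin U$; after adjoining an identity if necessary (the one-point extension $S^1$ remains compact and zero-dimensional), consider the continuous ternary multiplication $m_3\colon S^1\times S\times S^1\to S$, $(x,a,y)\mapsto xay$. The preimage $m_3^{-1}(U)$ is clopen in a product of compact zero-dimensional spaces, so it has a basis of clopen boxes; being itself compact, it is a \emph{finite} union $\bigcup_{i=1}^n X_i\times Y_i\times Z_i$ of clopen boxes. Define $a\sim_U b$ iff $xay\in U\Leftrightarrow xby\in U$ for all $x,y\in S^1$. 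A direct computation shows $\sim_U$ is a congruence. Moreover, whether $(x,y)\in V_a:=\{(x,y):xay\in U\}$ depends on $a$ only through the subset $I(a)=\{i:a\in Y_i\}\subset\{1,\dots,n\}$, so $\sim_U$ has at most $2^n$ classes, each of which is an intersection of the clopen sets $Y_i$ and their complements, hence clopen. The quotient map $S\to S/{\sim_U}$ is therefore a continuous morphism onto a finite semigroup, and taking $x=y=1$ shows it separates $u$ from $v$. This yields residual finiteness and completes the cycle.
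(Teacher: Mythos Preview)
Your argument is correct. The overall architecture matches the paper's: the same two cycles (i)--(ii)--(iii) and (iv)$\Leftrightarrow$(v) linked through (v)$\Rightarrow$(ii), with only a cosmetic reordering of the easy implications (you run (i)$\Rightarrow$(iii)$\Rightarrow$(ii)$\Rightarrow$(i) while the paper runs (i)$\Rightarrow$(ii)$\Rightarrow$(iii)$\Rightarrow$(i) and defers (iii)$\Rightarrow$(i) to \cite{Almeida2005}).

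The genuine difference is in the hard step (v)$\Rightarrow$(ii). The paper simply invokes Hunter's Lemma from~\cite{Almeida2005}, whereas you supply a self-contained argument via the syntactic congruence of a clopen set: writing $m_3^{-1}(U)$ as a finite union of clopen boxes and observing that the $\sim_U$-class of $a$ is determined by the finite index set $I(a)=\{i:a\in Y_i\}$. This is an elegant and elementary replacement for the citation; it makes the proof independent of the external reference and exposes exactly where compactness and zero-dimensionality are used. Your (ii)$\Rightarrow$(i) is likewise more explicit than the paper's treatment, which routes through (iii). One small remark: in that step you appeal to ``density of the image'' of $S$ in the inverse limit, but in fact the image is automatically closed (continuous image of a compact space in a Hausdorff space), so density immediately gives surjectivity---you might just say this directly via the finite-intersection property of the fibers $\pi_\theta^{-1}(x_\theta)$.
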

The explicit construction of the projective limit shows that
(i)$\Rightarrow$ (ii) and (ii)$\Rightarrow$ (iii) results from the definitions.
For (iii)$\Rightarrow$ (i), see~\cite{Almeida2005}.
Since a product of totally disconnected spaces is totally disconnected,
we have (iii)$\Rightarrow$ (iv). The equivalence (iv)$\Leftrightarrow$ (v)
holds for any compact space. Finally, the implication (v)$\Rightarrow$ (ii)
results from Hunter's Lemma (see~\cite{Almeida2005}).

\begin{corollary}\label{corollaryClosedSubsemigroup}
The class of profinite semigroups is closed under taking
 closed subsemigroups and direct product.
\end{corollary}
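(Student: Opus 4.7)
The plan is to exploit the equivalence (i)$\Leftrightarrow$(iii) of Theorem~\ref{theoremProfiniteSemigroups}: a compact semigroup is profinite if and only if it embeds as a closed subsemigroup of a direct product of finite semigroups. Both parts of the corollary then reduce to purely topological observations about closed subspaces of products.

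For closed subsemigroups, suppose $S$ is profinite and $T$ is a closed subsemigroup of $S$. By (iii), there is a topological embedding $\iota:S\hookrightarrow\prod_{i\in I}F_i$ identifying $S$ with a closed subsemigroup of a product of finite semigroups $F_i$. Since $T$ is closed in $S$ and $S$ is closed in $\prod_i F_i$, transitivity of the closed-subspace relation gives that $\iota(T)$ is closed in $\prod_i F_i$. It is obviously a subsemigroup, so by (iii) again $T$ is profinite.

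For direct products, let $(S_j)_{j\in J}$ be a family of profinite semigroups, each realized via (iii) as a closed subsemigroup $S_j\subseteq\prod_{i\in I_j}F_{j,i}$. Rearranging, $\prod_{j\in J}S_j$ sits naturally inside $\prod_{(j,i)}F_{j,i}$, which is again a direct product of finite semigroups. The key point is that a product of closed sets is closed in the product topology, since $\prod_j S_j=\bigcap_{j}\pi_j^{-1}(S_j)$ where $\pi_j$ is the projection onto the $j$-th factor of $\prod_{(j,i)}F_{j,i}$, and each $\pi_j^{-1}(S_j)$ is closed by continuity of $\pi_j$. Thus $\prod_j S_j$ is a closed subsemigroup of a product of finite semigroups, hence profinite by (iii).

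The one prerequisite worth noting is compactness, which is needed to apply Theorem~\ref{theoremProfiniteSemigroups}: a closed subspace of a compact space is compact, and an arbitrary product of compact (Hausdorff) spaces is compact by Tychonoff's theorem recalled in the previous section. So both constructions remain within the class of compact semigroups, and no additional obstacle appears; the only mildly delicate point is the rearrangement of indices and the verification that the diagonal embedding into the combined product has closed image, which is exactly the standard fact about products of closed sets.
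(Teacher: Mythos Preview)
Your proof is correct and is precisely the intended argument: the paper states the corollary immediately after Theorem~\ref{theoremProfiniteSemigroups} without giving an explicit proof, and your use of characterization~(iii) together with the standard facts about closed subspaces and Tychonoff's theorem is exactly how the corollary is meant to follow.
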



The following is \cite[Proposition 3.5]{Almeida2005}.
A subset $K$ of a semigroup $S$ is \emph{recognized}\index{set!recognized}
 by a morphism
$\varphi:S\rightarrow M$ if $K=\varphi^{-1}\varphi(K)$.
\begin{proposition}\label{propositionClopen}
Let $S$ be a profinite semigroup. A subset $K\subset S$ is clopen if and only
if it is recognized by a continuous morphism $\varphi:S\rightarrow T$ into a finite
semigroup $T$.
\end{proposition}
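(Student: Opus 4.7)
The plan is to prove both directions separately, with the reverse direction being by far the substantial one; it will use residual finiteness (from Theorem~\ref{theoremProfiniteSemigroups}) together with two successive compactness arguments, one on $S\setminus K$ and one on $K$.

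The forward direction ($\Leftarrow$) is immediate: if $K=\varphi^{-1}\varphi(K)$ for a continuous morphism $\varphi\colon S\rightarrow T$ with $T$ finite, then $T$ carries the discrete topology, so $\varphi(K)$ is automatically clopen in $T$, and $K$ is the preimage of a clopen set under a continuous map, hence clopen.

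For the reverse direction ($\Rightarrow$), assume $K$ is clopen in $S$. By Theorem~\ref{theoremProfiniteSemigroups}, $S$ is residually finite as a topological semigroup, so for every pair $(u,v)$ with $u\in K$ and $v\in S\setminus K$ there is a continuous morphism $\varphi_{u,v}\colon S\rightarrow M_{u,v}$ into a finite semigroup with $\varphi_{u,v}(u)\ne \varphi_{u,v}(v)$. Set
\[
U_{u,v}:=\varphi_{u,v}^{-1}\bigl(\varphi_{u,v}(u)\bigr),
\]
which is clopen, contains $u$, and misses $v$. Fix $u\in K$. As $v$ ranges over $S\setminus K$, the complements $S\setminus U_{u,v}$ form an open cover of $S\setminus K$; since $S\setminus K$ is closed in the compact space $S$, hence compact, there is a finite subcover indexed by $v_1,\dots,v_{n(u)}$. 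Then
\[
W_u:=\bigcap_{j=1}^{n(u)}U_{u,v_j}
\]
is clopen, contains $u$, and is contained in $K$.

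The family $\{W_u:u\in K\}$ is an open cover of the compact set $K$, so a finite subfamily $W_{u_1},\dots,W_{u_m}$ already covers $K$. I then take the product of the finitely many morphisms $\varphi_{u_i,v_j}$ appearing above, giving a continuous morphism $\varphi\colon S\rightarrow T$ into a finite semigroup $T$. To verify $K=\varphi^{-1}\varphi(K)$: if $\varphi(s)=\varphi(t)$ with $t\in K$, then $t\in W_{u_i}$ for some $i$, hence $t\in U_{u_i,v_j}$ for every $j$, and because the components $\varphi_{u_i,v_j}$ of $\varphi$ agree on $s$ and $t$ we get $s\in U_{u_i,v_j}$ for every $j$, i.e.\ $s\in W_{u_i}\subseteq K$. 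The main subtlety is precisely this verification: one has to keep track of the two-step indexing so that a single fibre of $\varphi$ lies entirely inside one of the $W_{u_i}$, which is what ensures recognition rather than just pointwise separation. Note that alternatively one could exploit clause~(iii) of Theorem~\ref{theoremProfiniteSemigroups} together with the description of clopen sets in a product of finite spaces as finite unions of clopen boxes, but the argument above is self-contained once residual finiteness is available.
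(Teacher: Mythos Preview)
Your proof is correct, and you already identified the paper's route in your closing remark. The paper does exactly what you suggest at the end: it invokes clause~(iii) of Theorem~\ref{theoremProfiniteSemigroups} to view $S$ as a closed subsemigroup of a product $\prod_{i\in I}S_i$ of finite semigroups, uses that a clopen subset of such a product is a finite union of clopen boxes, and concludes that membership in $K$ depends only on finitely many coordinates $F\subset I$; the projection onto $\prod_{i\in F}S_i$ then recognizes $K$.

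Your argument instead relies on clause~(ii) (residual finiteness) plus two compactness extractions. This is genuinely different in flavour: the paper's proof is shorter and more structural once one accepts the description of clopen sets in a product of compact spaces, whereas your argument is a self-contained separation-and-compactness proof that avoids any appeal to the box description. Your approach also makes the role of compactness more explicit, and generalizes cleanly to settings where one has residual finiteness but no convenient product embedding at hand. Both are standard; the paper's is the one most often quoted in the profinite literature because it immediately exhibits the recognizing morphism as a coordinate projection.
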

\begin{proof}
The condition is sufficient since the set $K$ is the inverse image under a continuous function of a clopen set. Conversely assume that $K$ is clopen.
Since $S$ is profinite, it is by Theorem\ref{theoremProfiniteSemigroups}
  a closed subsemigroup of a direct product $\prod_{i\in I}S_i$
of finite semigroups $S_i$. Since $K$ is clopen, it is a finite
union of clopen boxes, that is a finite union
of sets $\Pi_{i\in I}K_i$ with $K_i=S_i$ for all but a finite number
of indices $i$. Thus there is a finite set
$F\subset I$ such that being in $K$ only depends on the coordinates 
in $F$. Then the projection $\varphi:S\rightarrow \Pi_{i\in F}S_i$ is a continuous
morphism into a finite semigroup which recognizes $K$. Indeed, for $s\in\varphi^{-1}\varphi(K)$, there is $t\in K$ such that  $\varphi(s)=\varphi(t)$.
Since $s$ and $t$ agree on their $F$-coordinates, we have $s\in K$.
\end{proof}
\subsection{Profinite groups}
A \emph{profinite group}\index{profinite!group}\index{group!profinite} is a projective limit
of a projective system of finite groups.

A topological group which is a profinite semigroup is actually a
profinite group. Indeed, let $S$ be
the projective limit of 
a family $S_i$ of finite semigroups. We may assume
that the morphisms $\Phi_i:S\rightarrow S_i$
are surjective. If $S$ is a group, since the image of a group by a semigroup
morphism is a group, each $S_i$ is a finite group and thus $S$ is
a profinite group.

 Any profinite group is a compact group. Indeed, it is a closed subgroup
of a direct product of finite and thus compact groups.

A simple example of a compact group which is not profinite
is the multiplicative group $[0,1]$, which has no nontrivial
image which is a finite group.

 A closed  subgroup
of a profinite group is profinite.
Indeed, let $H$ be a closed subgroup
of a profinite group $G$.
Then $H$ is a closed subsemigroup of $G$ and thus it is a profinite
semigroup by Corollary~\ref{corollaryClosedSubsemigroup}, whence a profinite group.

A group $G$ is \emph{Hopfian}\index{group!Hopfian}\index{Hopfian group}
if every endomorphism of $G$ which is onto is an isomorphism. The
following is~\cite[Proposition 2.5.2]{RibesZalesskii2010}.
It is an analogue property for  profinite groups.

\begin{proposition}\label{propositionHopfian}
Let $G$ be a finitely generated profinite group. and let
$\varphi:G\rightarrow G$ be a continuous surjective morphism.
Then $\varphi$ is an isomorphism.
\end{proposition}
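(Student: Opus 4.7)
The plan is to exploit the fact that a topologically finitely generated profinite group has only finitely many open subgroups of each index, and to show that a continuous surjective endomorphism induces a permutation on these finite sets. This will force the kernel into every open normal subgroup, and residual finiteness will do the rest.

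First, for each integer $n\ge 1$, let $\mathcal{N}_n$ denote the set of open normal subgroups of $G$ of index $n$. Each such $N$ is the kernel of a continuous surjective morphism from $G$ onto a group of order $n$. Since $G$ is topologically generated by a finite set $A$, any continuous morphism from $G$ into a finite group is determined by its restriction to $A$, so there are only finitely many possibilities, hence $\mathcal{N}_n$ is finite.

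Next, for any $N\in\mathcal{N}_n$ the preimage $\varphi^{-1}(N)$ is open and normal in $G$, and surjectivity of $\varphi$ yields $G/\varphi^{-1}(N)\cong \varphi(G)/N=G/N$, so $\varphi^{-1}(N)\in\mathcal{N}_n$. Thus $N\mapsto \varphi^{-1}(N)$ sends $\mathcal{N}_n$ into itself. This map is injective, because surjectivity of $\varphi$ gives $\varphi(\varphi^{-1}(N))=N$. A self-injection of a finite set is a bijection, so $\varphi^{-1}$ permutes $\mathcal{N}_n$.

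Consequently, since $\ker\varphi\subset\varphi^{-1}(N)$ for every $N\in\mathcal{N}_n$, and $\{\varphi^{-1}(N)\mid N\in\mathcal{N}_n\}=\mathcal{N}_n$, we obtain
\begin{displaymath}
\ker\varphi\subset\bigcap_{N\in\mathcal{N}_n} N.
\end{displaymath}
Letting $n$ vary and invoking Theorem~\ref{theoremProfiniteSemigroups} (a profinite group is residually finite, so the intersection of all its open normal subgroups is trivial), we conclude $\ker\varphi=\{1\}$. Hence $\varphi$ is a continuous bijection from the compact space $G$ to the Hausdorff space $G$; such a map is automatically a homeomorphism, so $\varphi$ is an isomorphism of topological groups.

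The main obstacle is the first step, the finiteness of $\mathcal{N}_n$, which is where the topological finite generation hypothesis enters. Once that is in hand, the combinatorial permutation argument and the residual finiteness of profinite groups combine very cleanly.
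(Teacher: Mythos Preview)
Your proof is correct and follows essentially the same route as the paper: both show that $\mathcal{N}_n$ is finite using finite generation, that $N\mapsto\varphi^{-1}(N)$ is an injective self-map of $\mathcal{N}_n$ and hence a bijection, and conclude that $\ker\varphi$ lies in every open normal subgroup. Your version is in fact slightly more careful than the paper's, since you work explicitly with \emph{open} normal subgroups and you add the final step (continuous bijection from compact to Hausdorff is a homeomorphism) that the paper leaves implicit.
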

\begin{proof}
We show that $\Ker(\varphi)$ is contained in any subgroup of finite
index of $G$. Since $G$ is profinite, it will imply that $\Ker(\varphi)=\{1\}$.

For each finite group $F$, since $G$ is finitely generated, there
is a finite number of morphisms from $G$ into $F$. Thus there
is only a finite number of morphisms from $G$ into a finite group
of order $n$ and thus also a finite number of normal subgroups of index $n$
of $G$.

Let ${\mathcal U}_n$ be the family of normal subgroups of $G$ of index $n$.
Let $\Phi:{\mathcal U}_n\rightarrow {\mathcal U}_n$ be defined by
$\Phi(U)=\varphi^{-1}(U)$. Then $\Phi$ is an injective map
from a finite set into itself and thus it is a bijection.

Let $U$ be a normal subgroup of index $n$ of $G$. Then, since $\Phi$ is
surjective, we have $U=\varphi^{-1}(V)$ for som $V\in{\mathcal U}_n$.
This implies that $\Ker(\varphi)\subset U$, which was to be proved. 
\end{proof}
\subsection{Endomorphisms of profinite semigroups}
For a topological semigroup, we denote by $\End(S)$ the monoid of all its
continuous endomorphisms. We consider $\End(S)$ as a topological monoid
for the pointwise convergence.

The following result is~\cite[Theorem 4.14]{Almeida2005}.
\begin{theorem}\label{theoremEndomorphismMonoid}
Let $S$ be a finitely generated profinite semigroup. Then 
$\End(S)$ is a profinite monoid.
\end{theorem}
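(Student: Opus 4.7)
The strategy is to verify the hypotheses of Theorem~\ref{theoremProfiniteSemigroups} for $\End(S)$ equipped with the pointwise convergence topology, showing it is a compact residually finite topological monoid. The crucial consequence of finite generation, used repeatedly, is the finiteness of $\mathrm{Hom}(S,F)$ for every finite semigroup $F$: any continuous morphism $S \to F$ is determined by its values on a fixed finite generating set $A \subset S$, and there are at most $|F|^{|A|}$ such value assignments.

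First I would check that composition is jointly continuous in the pointwise topology, so that $\End(S)$ is a topological monoid. Given nets $\alpha_i \to \alpha$ and $\beta_i \to \beta$ in $\End(S)$ and $s \in S$, the aim is $\alpha_i(\beta_i(s)) \to \alpha(\beta(s))$. Since $S$ is zero-dimensional and by Proposition~\ref{propositionClopen} every clopen in $S$ has the form $\pi^{-1}(L)$ for some continuous $\pi : S \to F$ with $F$ finite, it suffices to check $\pi(\alpha_i(\beta_i(s))) = \pi(\alpha(\beta(s)))$ eventually. Because $\mathrm{Hom}(S,F)$ is finite with morphisms determined by values on $A$, and $\pi(\alpha_i(a)) = \pi(\alpha(a))$ eventually for every $a \in A$, one obtains $\pi \circ \alpha_i = \pi \circ \alpha$ as morphisms eventually; continuity of $\pi \circ \alpha$ together with $\beta_i(s) \to \beta(s)$ then finishes the argument.

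Next I would prove compactness by showing $\End(S)$ is closed in $S^S$, which is compact by Tychonoff. The homomorphism conditions $f(xy) = f(x)f(y)$ cut out a closed subset. If $f \in S^S$ is a pointwise limit of $\varphi_i \in \End(S)$, then the same ``eventually equal'' device supplies, for each continuous $\pi : S \to F$ with $F$ finite, a $\psi \in \mathrm{Hom}(S,F)$ such that $\pi \circ \varphi_i = \psi$ eventually, and pointwise convergence then forces $\pi \circ f = \psi$. Thus $\pi \circ f$ is continuous for every such $\pi$. By Proposition~\ref{propositionClopen}, every clopen $K \subset S$ equals $\pi^{-1}(L)$, so $f^{-1}(K) = (\pi \circ f)^{-1}(L)$ is clopen, and $f$ is continuous. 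This step is the main obstacle: finite generation is essential, since otherwise $\mathrm{Hom}(S,F)$ could be infinite and the finite-discrete stabilization would fail.

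Finally, for residual finiteness, for each finite semigroup $F$ I would introduce $\Phi_F : \End(S) \to M_F$, where $M_F$ denotes the opposite of the finite monoid $\mathrm{Func}(\mathrm{Hom}(S,F), \mathrm{Hom}(S,F))$ under composition, defined by $\Phi_F(\alpha)(\tau) = \tau \circ \alpha$. The map $\Phi_F$ is a monoid morphism into a finite monoid, and is continuous because preimages of singletons decompose as finite intersections of clopens of the form $\{\alpha : \alpha(a) \in \tau^{-1}(y)\}$. Given $\varphi \ne \psi$ in $\End(S)$, some $s$ has $\varphi(s) \ne \psi(s)$; residual finiteness of $S$ yields a continuous $\pi : S \to F$ with $\pi(\varphi(s)) \ne \pi(\psi(s))$, giving $\pi \circ \varphi \ne \pi \circ \psi$ and hence $\Phi_F(\varphi) \ne \Phi_F(\psi)$. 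A final appeal to Theorem~\ref{theoremProfiniteSemigroups} then shows $\End(S)$ is profinite.
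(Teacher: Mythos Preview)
The paper does not supply its own proof of this theorem; it merely cites \cite[Theorem 4.14]{Almeida2005}. Your argument is therefore not being compared against anything in the present text, but it is essentially the standard route and is correct.

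A couple of remarks. In the compactness step you should say explicitly that $S^S$ with the product topology is Hausdorff (so that ``closed in a compact Hausdorff space'' really gives a compact space in the sense used in the paper, which builds Hausdorff into the definition). This is immediate since $S$ is Hausdorff, but it should be stated. Your stabilization argument (``eventually $\pi\circ\varphi_i=\psi$'') is the heart of the matter and relies cleanly on two facts: continuous morphisms to a finite semigroup are determined by their values on the finite generating set $A$, and a finite intersection of ``eventually'' conditions over $a\in A$ is again ``eventually'' for nets. Both are used correctly.

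For residual finiteness, your map $\Phi_F$ into the opposite of the full transformation monoid on $\mathrm{Hom}(S,F)$ is exactly the device used in the literature (sometimes phrased as the action of $\End(S)$ on the finite set of congruences of index at most $n$, which amounts to the same thing). The continuity check via clopen preimages of evaluations at generators is fine. Altogether your proof would stand in for the missing argument without difficulty.
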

\subsection{The $\omega$ operator}
Recall that an \emph{idempotent}\index{idempotent} in a semigroup $S$
is an element $e\in S$ such that $e^2=e$.

In a finite semigroup $S$, the semigroup generated by $s\in S$
can be represented as in Figure~\ref{fig0_01} (the frying pan)
with the
\emph{index}\index{index} $i$ and the \emph{period}\index{period} $p$
such that $s^{i+p}=s^i$.
It contains a unique idempotent, which is of the form $s^{np}$
with $n$ such that $np\ge i$. Thus, $\Z/p\Z$ is a maximal subroup
of the semigroup generated
by $s$, which coincides with its minimal ideal.
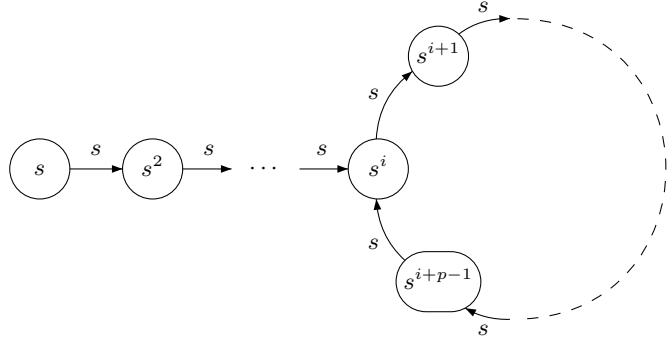
\begin{figure}[hbt]
\centering
\begin{picture}(100,45)(0,-20)
\small
  \node(2)(15,0){$\vphantom(s$}
  \node(3)(30,0){$\vphantom(s^2$}
  \node[Nw=9,Nframe=n](4)(45,0){$\cdots$}
  \node(5)(60,0){$\vphantom(s^i$}
  \node(6)(68,15){$\vphantom(s^{i+1}$}
  \node[Nframe=n,Nw=1](7)(78,20){}
  \node[Nframe=n,Nw=1](8)(78,-20){}
  \node[Nadjust=w](9)(68,-15){$\vphantom(s^{i+p-1}$}
  \drawedge(2,3){$\vphantom(s$}
  \drawedge(3,4){$\vphantom(s$}
  \drawedge(4,5){$\vphantom(s$}
  \drawedge[curvedepth=3.3](5,6){$s$}
  \drawedge[curvedepth=1.6,ELpos=70](6,7){$s$}
  \drawedge[curvedepth=1.6,ELpos=30](8,9){$s$}
  \drawedge[curvedepth=3.3](9,5){$s$}
\drawbpedge[AHnb=0,dash={1.5}0](7,0,27,8,0,27){}
\end{picture}
\caption{The semigroup generated by $s$.}
\label{fig0_01}
\end{figure}

In a compact semigroup $S$, just as in a finite semigroup,
the closure of the semigroup generated by an element
$s\in S$ contains a unique idempotent, denoted $s^\omega$. If $S$ is profinite, it is the limit
of the sequence $s^{n!}$.

We note that in any profinite group, one has $x^\omega=1$ since 
the neutral element is the only idempotent of $G$.

In general, the index can be a finite integer $i$, in which case
we have $s^{\omega+i}=s^\omega$. It can also be
infinite and equal to $\omega$, as in the semigroup $\hat{\N}$.
 Independently, the period can also be finite or infinite.

We also note that for any endomorphism $\varphi$ of a profinite monoid $S$,
the endomorphism $\varphi^\omega$ is a well defined endomorphism of $S$.
\subsection{The free profinite monoid}
Consider the projective system
formed by representatives of isomorphism classes of 
all $A$-generated finite monoids (the finite monoids
are considered as topological monoids for the discrete topology). For $\varphi:A\rightarrow M$
and $\psi:A\rightarrow N$, one has $\varphi\ge \psi$ if
there is a morphism $\mu:M\rightarrow N$ such that $\mu\circ\varphi=\psi$.
Note that $\varphi,\psi,\mu$ have to be surjective.

The \emph{free profinite monoid}\index{free profinite!monoid}\index{monoid!free profinite} on a finite alphabet $A$, denoted $\widehat{A^*}$ is the projective limit of this family. It has the following universal
property (see Figure~\ref{figureUniversal}).

\begin{proposition}
The natural mapping $\iota:A\rightarrow \widehat{A^*}$ is such that
for any map $\varphi:A\rightarrow M$ into a profinite monoid there
exists a unique continuous morphism $\hat{\varphi}:\widehat{A^*}\rightarrow M$
such that $\hat{\varphi}\circ\iota=\varphi$.
\end{proposition}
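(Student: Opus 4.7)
The plan is to build $\hat\varphi$ by using both the projective‐limit description of $\widehat{A^*}$ (which packages together all $A$‐generated finite monoids) and the profiniteness of the target $M$ (which lets us test equality in $M$ by mapping to finite monoids). The heart of the argument is that an $A$‐indexed family $\varphi:A\to M$ canonically determines, for every continuous morphism $\rho$ from $M$ to a finite monoid, an element of the projective system defining $\widehat{A^*}$.

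Concretely, I would first use Theorem~\ref{theoremProfiniteSemigroups} to write $M$ as the projective limit of a system of finite monoids $(M_j)_{j\in J}$ with continuous (surjective) connecting morphisms $\rho_{j,k}:M_j\to M_k$ and projections $\rho_j:M\to M_j$. For each $j$, the composite $\rho_j\circ\varphi:A\to M_j$ lands in the $A$-generated finite submonoid $N_j=\langle \rho_j\varphi(A)\rangle$ of $M_j$. This $\rho_j\circ\varphi:A\to N_j$ is an object of the projective system of $A$-generated finite monoids whose limit defines $\widehat{A^*}$, so by the universal property of that limit there is a canonical continuous morphism $\Phi_j:\widehat{A^*}\to N_j$ satisfying $\Phi_j\circ\iota=\rho_j\circ\varphi$. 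Composing with the inclusion $N_j\hookrightarrow M_j$, I get a continuous morphism $\widehat{A^*}\to M_j$ which I will still call $\Phi_j$.

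Next I would check that the family $(\Phi_j)_{j\in J}$ is compatible with the connecting morphisms of $M$: for $j\ge k$ the two continuous morphisms $\rho_{j,k}\circ\Phi_j$ and $\Phi_k$ from $\widehat{A^*}$ to $M_k$ agree on the image $\iota(A)$ (both send $a$ to $\rho_k\varphi(a)$), hence on the dense subsemigroup generated by $\iota(A)$, hence everywhere by continuity and Hausdorffness of $M_k$. The universal property of $M$ as a projective limit then yields a unique continuous morphism $\hat\varphi:\widehat{A^*}\to M$ with $\rho_j\circ\hat\varphi=\Phi_j$ for all $j$. To verify $\hat\varphi\circ\iota=\varphi$, I project through each $\rho_j$ and use $\rho_j\circ\hat\varphi\circ\iota=\Phi_j\circ\iota=\rho_j\circ\varphi$; since the $\rho_j$ separate points of $M$ (they are the projections of a projective limit), the equality holds in $M$.

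For uniqueness, any two continuous morphisms $\widehat{A^*}\to M$ that agree on $\iota(A)$ agree on the subsemigroup generated by $\iota(A)$; this subsemigroup is dense in $\widehat{A^*}$ (it is the image of the generating map, which is dense by construction of the projective limit), so by continuity and Hausdorffness of $M$ they coincide. The main conceptual point — and where I would be most careful — is the compatibility check and the fact that the finite quotient $N_j=\langle\rho_j\varphi(A)\rangle$ really is an object of the defining system of $\widehat{A^*}$; once these are in place the rest is a formal application of the two universal properties.
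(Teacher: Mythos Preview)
The paper states this proposition without proof, so there is no argument to compare against. Your proof is correct and is the standard way to establish the universal property of the free profinite monoid: reduce to finite quotients of the target $M$, observe that each such quotient (restricted to the image of $A$) is an $A$-generated finite monoid and hence receives one of the structure projections of $\widehat{A^*}$, check compatibility by density, and assemble using the limit description of $M$.

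One small terminological quibble: when you write ``by the universal property of that limit there is a canonical continuous morphism $\Phi_j:\widehat{A^*}\to N_j$,'' what you are actually invoking is the \emph{structure map} (projection) that comes as part of the data of the projective limit $\widehat{A^*}$, not its universal property. The universal property produces morphisms \emph{into} the limit from compatible cones; the outgoing projections to each $A$-generated finite monoid in the system are simply given. This does not affect the correctness of the argument. Also note that strictly speaking only a representative of the isomorphism class of $(\rho_j\circ\varphi:A\to N_j)$ lies in the indexing system, so one silently composes with an isomorphism of $A$-generated monoids; again harmless, but worth a word if you write this up carefully.
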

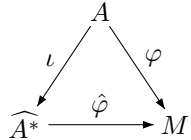
\begin{figure}[hbt]
\centering
\gasset{Nframe=n,Nadjust=wh}
\begin{picture}(20,15)
\node(S)(20,0){$M$}\node(T)(0,0){$\widehat{A^*}$}
\node(A)(10,15){$A$}

\drawedge(A,S){$\varphi$}
\drawedge[ELside=r](A,T){$\iota$}
\drawedge(T,S){$\hat{\varphi}$}
\end{picture}
\caption{The universal property of $\widehat{A^*}$.}
\label{figureUniversal}
\end{figure}
The elements of $\widehat{A^*}$ are called
\emph{pseudowords}\index{pseudoword} and the elements of $\widehat{A^*}\setminus A^*$ are called
\emph{infinite pseudowords}.

The free profinite monoid on one generator is commutative.
Its image  by the
map $a^n\mapsto n$ is the monoid of \emph{profinite natural numbers}
\index{profinite!natural number},
denoted $\hat{\N}$.

The topology induced $A^*$ by the profinite topology on $\widehat{A^*}$
is discrete. Indeed,
if $u\in A^*$ is a word of length $n$, the quotient of $A^*$
by the ideal formed by the words of length greater than $n$ is a finite
monoid. The congruence class of $u$ for the corresponding quotient is 
reduced to $u$.

The \emph{length} $|x|$ of a pseudoword $x\in \widehat{A^*}$
is a profinite natural
number. The map $x\in \widehat{A^*}\rightarrow |x|\in \hat{\N}$ is the continuous
morphism $\lambda$ such that $\lambda(a)=1$ for every $a\in A$.
\begin{example}
The length of $xy^\omega$ is $|x|+\omega$.
\end{example}
\subsection{The free profinite group}
Likewise, the \emph{free profinite group}\index{free profinite!group}
\index{group!free profinite}, denoted $\widehat{\F(A)}$
 is the projective limit of the projective system formed by the
isomorphism classes of $A$-generated finite groups.

The topology on the free group $\F(A)$ induced by the topology
of $\widehat{\F(A)}$ is not discrete. Indeed, for any
$x$ in $\F(A)$, the sequence $x^{n!}$ tends to $1$. Thus $A^*$
is dense in $\widehat{\F(A)}$ and there is an onto homomorphism
from $\widehat{A^*}$ onto $\widehat{\F(A)}$.

The topology induced on $\F(A)$ by the topology of  $\widehat{\F(A)}$
is also called the \emph{Hall topology}\index{Hall topology}. It has been indeed
introduced by M. Hall in~\cite{Hall1950}. Note that, since $A^*$
is embedded in $\F(A)$, we actually
have two topologies on $A^*$ respectively induced by the topologies
of $\widehat{A^*}$ and $\widehat{\F(A)}$. To distinguish them,
the first one is called the \emph{pro-$M$ topology}\index{topology!pro-$M$} and the
second one the \emph{pro-$G$ topology}\index{topology!pro-$G$}. The first one is strictly
stronger than the second one. The pro-$G$ topology on $A^*$
was introduced by Reutenauer in~\cite{Reutenauer1979}.

The image of the free profinite group on one generator $a$
by the map $a^n\mapsto n$ is the group $\hat{\Z}$ of profinite
integers (see Section~\ref{sectionpAdic}). The \emph{length} $|x|$
of an element $x$ of $\widehat{\F(A)}$ is a profinite integer.
The map $x\in \widehat{\F(A)}\rightarrow |x|\in \hat{\Z}$
is the unique continuous morphism such that $|a|=1$
for every $a\in A$. In particular $|a^{-1}|=-1$.

The following result is from~\cite[p. 131]{Hall1950}. The property is not
true for non finitely generated subgroups. The classical example
is the commutator subgroup $F'$ of a finitely generated free group
$F$, which is known to be a free group of countable infinite rank.
 In the topology induced on $F'$ by the profinite topology on $F$,
there are only countably many open subgroups while the number of
open subgroups in the profinite topology of $F'$ is uncountable.
\begin{proposition}\label{propositionFGClosed}
Any finitely generated subgroup $H$ of $\F(A)$ is closed for the topology
induced on $\F(A)$ by the pro-$G$ topology.
\end{proposition}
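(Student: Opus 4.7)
The plan is to reformulate closedness in the pro-$G$ topology as a separation property: $H$ is closed in $\F(A)$ precisely when, for every $g\in\F(A)\setminus H$, there is a subgroup $K_g$ of finite index of $\F(A)$ containing $H$ but not $g$. Indeed, any subgroup of finite index is clopen in the pro-$G$ topology, since it contains its normal core (a basic open neighborhood of $1$) and is a union of finitely many cosets of that core. Hence each such $K_g$ provides an open neighborhood $\F(A)\setminus K_g$ of $g$ disjoint from $H$, and the intersection of all the $K_g$ collapses onto $H$.

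To produce $K_g$, I would use the Stallings graph machinery. Associate to the finitely generated subgroup $H=\langle h_1,\ldots,h_k\rangle$ its \emph{Stallings graph} $\Gamma_H$: the finite folded graph with edges labeled by $A$ (inverses correspond to reading edges backwards) obtained from the bouquet of loops spelling the reduced forms of the $h_i$ at a distinguished base vertex $v_0$, by iteratively identifying edges that violate folding. The defining property is that a reduced word $w$ lies in $H$ if and only if the $w$-path starting at $v_0$ is defined throughout $\Gamma_H$ and returns to $v_0$. Given $g\notin H$, extend $\Gamma_H$ by attaching the reduced path of $g$ starting at $v_0$, and refolding; this yields a finite labeled graph $\Gamma_{H,g}$ in which the $g$-path is fully defined and terminates at some vertex $v_g\ne v_0$.

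I would then invoke the Hall--Stallings completion lemma: any finite folded $A$-labeled graph embeds into a finite $A$-labeled graph $\hat\Gamma$ in which every letter $a\in A$ acts as a total permutation of the vertex set. The completion is performed letter by letter, by pairing up vertices missing an outgoing $a$-edge with those missing an incoming $a$-edge, adjoining auxiliary vertices outside $\Gamma_{H,g}$ only when needed. This delivers a homomorphism $\varphi:\F(A)\to\mathrm{Sym}(\hat\Gamma)$ into a finite group, and the stabilizer $K_g:=\varphi^{-1}\bigl(\mathrm{Stab}(v_0)\bigr)$ is a finite-index subgroup of $\F(A)$ that contains $H$ (each $h_i$ fixes $v_0$ by construction of $\Gamma_H$) and excludes $g$ (the $g$-path from $v_0$ already lands at $v_g\ne v_0$ in $\Gamma_{H,g}$, hence also in $\hat\Gamma$). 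This is the required $K_g$.

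The combinatorial heart of the argument, and the main obstacle, is the completion lemma together with the requirement that edges added during completion do not accidentally reroute the $g$-path back to $v_0$. The trick is to incorporate the full $g$-path into $\Gamma_{H,g}$ \emph{before} completing, so that every letter appearing in $g$ is already defined at every vertex visited by that path, and completion only modifies the action of letters at previously undefined positions. Finite generation of $H$ is essential throughout: it is precisely what makes $\Gamma_H$ finite, and, as the example of the commutator subgroup of a finitely generated free group alluded to before the statement shows, the conclusion genuinely fails without it.
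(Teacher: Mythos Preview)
Your argument is correct and shares the paper's high-level strategy: closedness is reduced to separating each $g\notin H$ from $H$ by a finite-index subgroup. The paper obtains this separating subgroup by invoking Hall's theorem (stated in the text as Theorem~\ref{theoremHall}) as a black box, which actually gives more---a finite-index $K$ in which $H$ sits as a free factor. You instead prove the needed special case directly via Stallings graphs and the completion-to-a-covering lemma. This is in fact the standard modern proof of Hall's theorem, so you have essentially inlined the argument the paper cites; your version is more self-contained and avoids the unneeded free-factor conclusion, at the cost of introducing the Stallings-graph machinery that the paper sidesteps by citation.
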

The proof relies on the following result (see~\cite[Theorem 5.1]{Hall1949} or \cite[Proposition 3.10]{LyndonSchupp2001}). Recall that a \emph{free factor}
\index{group!free factor}of a group $G$ is a subgroup $H$ such that $G$ is a free product
of $H$ and a subgroup $K$ of $G$. When $G$ is a free group, this is
equivalent to the following property: for some basis $X$ of $H$
there is a subset $Y$ of $G$ such that $X\cup Y$
is a basis of $G$.
\begin{theorem}[Hall]\label{theoremHall}
 For any finitely generated subgroup $H$
of $\F(A)$ and any $x\in \F(A)\setminus H$,
there is a subgroup of finite index $K$ such that $H$ is a free
factor of $K$ and
$x\notin K$ .
\end{theorem}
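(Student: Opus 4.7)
The plan is to argue via Stallings' folded $A$-labeled graphs (equivalently, Schreier coset graphs). To the finitely generated subgroup $H \subset \F(A)$ I associate its \emph{Stallings graph} $\Gamma_H$: a finite connected $(A\cup A^{-1})$-labeled graph with base point $*$, obtained by folding the wedge of loops corresponding to a finite generating set of $H$. By the standard theory, the closed paths at $*$ read precisely the words representing elements of $H$, and for every spanning tree $T$ of $\Gamma_H$ the loops associated to the edges outside $T$ form a free basis of $H$. Finite generation of $H$ is exactly what forces $\Gamma_H$ to be finite.

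Since $x\notin H$, the path labelled $x$ starting at $*$ in $\Gamma_H$ does not return to $*$. This path may traverse edges missing from $\Gamma_H$, but I can prolong the graph by attaching finitely many new vertices and edges along the reading of $x$, producing a finite folded graph $\Gamma'$ in which the $x$-path is entirely defined and ends at some vertex $v\neq *$. I then complete $\Gamma'$ into a finite graph $\Gamma$ on which, for every letter $a\in A$, the $a$-edges form a total bijection of the vertex set; this is done by adding auxiliary vertices to balance the partial bijections and pairing unmatched half-edges, without touching any edge used by the $x$-path. Consequently, in $\Gamma$ the $x$-path from $*$ still ends at $v\neq *$.

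The graph $\Gamma$ is then the Schreier coset graph of $K:=\pi_1(\Gamma,*)$, so $K$ has finite index in $\F(A)$ equal to the number of vertices of $\Gamma$, and $x\notin K$ by construction. The inclusion $\Gamma_H \hookrightarrow \Gamma$ induces $H\subset K$. To see that $H$ is a free factor of $K$, I extend a spanning tree $T$ of $\Gamma_H$ to a spanning tree $T'$ of $\Gamma$: the standard basis of $K$ read off from the edges outside $T'$ contains, as a subset, the standard basis of $H$ read off from the edges of $\Gamma_H$ outside $T$. Thus a basis of $H$ extends to a basis of $K$, which by the characterisation recalled just before the theorem is precisely the condition for $H$ to be a free factor of $K$. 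The main obstacle is the completion step: one must check that the partial bijections induced by the letters in $\Gamma'$ can simultaneously be completed to full bijections on a finite enlarged vertex set (this may force the introduction of "dummy" vertices to balance sources and sinks of each letter) while not accidentally redirecting the $x$-path back to $*$. Ordering the constructions correctly — first enlarge $\Gamma_H$ to $\Gamma'$ to absorb the entire $x$-path, then complete — makes the second point automatic and reduces the difficulty to the purely combinatorial balancing problem.
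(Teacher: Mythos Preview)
The paper does not give its own proof of this theorem: it is quoted as a classical result, with references to Hall's 1949 paper and to Lyndon--Schupp, and is then used to derive Proposition~\ref{propositionFGClosed}. So there is nothing to compare against at the level of argument.

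Your Stallings-graph argument is the standard modern proof and is correct. Two small remarks. First, your worry about needing ``dummy'' vertices in the completion step is unnecessary: once $\Gamma'$ is built, each letter $a\in A$ already defines a partial bijection on the finite vertex set of $\Gamma'$, so the set of vertices lacking an outgoing $a$-edge and the set lacking an incoming $a$-edge have the same cardinality; matching them arbitrarily completes the letter to a permutation without enlarging the vertex set. Second, the reason the endpoint $v$ of the $x$-path in $\Gamma'$ is guaranteed to differ from $*$ deserves one line of justification: either the $x$-path stays entirely in $\Gamma_H$, in which case $v=*$ would give $x\in H$; or at some step it leaves $\Gamma_H$ into a newly created vertex, and since $x$ is a reduced word it never backtracks, so it remains among new vertices and ends at one of them. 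With these points made explicit, the proof is complete. Hall's original proof proceeds via Schreier transversals and coset enumeration; the Stallings approach is a geometric repackaging of the same construction.
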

To deduce Proposition~\ref{propositionFGClosed} from Hall's Theorem,
consider a sequence $(x_n)$ of elements of $H$ converging to
some $x\in \F(A)$. Suppose that $x\notin H$. By Theorem~\ref{theoremHall},
there is a subgroup $K$ of finite index in $\F(A)$ containing
$H$ such that
$x\notin K$. Thus $(x_n)$ cannot converge to $x$.

It follows from Theorem~\ref{theoremHall} that one has
the following result~\cite[Corollary 2.2]{CoulboisSapirWeil2003}
\begin{corollary}\label{corollaryCoulbois}
Any injective morphism
$\varphi:\F(B)\rightarrow \F(A)$ between finitely
generated free groups extends to an injective
continuous morphism $\hat{\varphi}:\widehat{\F(B)}\rightarrow \widehat{\F(A)}$
\end{corollary}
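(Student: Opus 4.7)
The plan is to first obtain the extension by the universal property, and then verify injectivity by reducing everything to a statement about finite quotients that can be handled directly via Hall's theorem.

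For the existence of $\hat{\varphi}$, I would view $\F(A)$ as canonically embedded in $\widehat{\F(A)}$ and apply the universal property of the free profinite group to the composite map $B\to \F(B)\to \F(A)\hookrightarrow\widehat{\F(A)}$. This yields a unique continuous morphism $\hat{\varphi}:\widehat{\F(B)}\to\widehat{\F(A)}$ extending $\varphi$. No work is needed here.

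For injectivity, the key intermediate claim I would establish is the following: for every subgroup $N$ of finite index in $\F(B)$, there exists a normal subgroup $M$ of finite index in $\F(A)$ with $\varphi^{-1}(M)\subseteq N$. Granting this, injectivity follows from a standard inverse-limit argument. Indeed, identify $\widehat{\F(B)}$ with the system of compatible tuples $(y_N)$ indexed by normal finite-index subgroups $N\lhd \F(B)$, and similarly for $\widehat{\F(A)}$. Since $\varphi$ factors through injective maps $\F(B)/\varphi^{-1}(M)\hookrightarrow \F(A)/M$, the condition $\hat{\varphi}(y)=1$ forces $y_{\varphi^{-1}(M)}=1$ for every normal finite-index $M\lhd \F(A)$. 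By the claim, every normal finite-index subgroup $N\lhd \F(B)$ contains some such $\varphi^{-1}(M)$, and therefore $y_N=1$ for all $N$, giving $y=1$.

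The main obstacle, and the place where Hall's theorem is used, is the claim itself. To prove it, fix a finite-index $N\leq \F(B)$. The subgroup $N$ is finitely generated by Nielsen--Schreier, so $\varphi(N)$ is a finitely generated subgroup of $\F(A)$. Pick coset representatives $n_1,\dots,n_k$ of $N$ in $\F(B)$ with $n_1\in N$. For each $i\geq 2$, injectivity of $\varphi$ gives $\varphi(n_i)\notin \varphi(N)$, so by Theorem~\ref{theoremHall} applied to the finitely generated subgroup $\varphi(N)$ and the element $\varphi(n_i)$, there exists a finite-index subgroup $K_i\leq \F(A)$ containing $\varphi(N)$ but not $\varphi(n_i)$. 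Set $M_0=\bigcap_{i\geq 2} K_i$; this is finite-index, contains $\varphi(N)$, and excludes each $\varphi(n_i)$ with $i\geq 2$. A direct coset computation then shows $\varphi^{-1}(M_0)=N$: the inclusion $N\subseteq\varphi^{-1}(M_0)$ is clear, while if $x\notin N$ is written as $x=n_i n$ with $i\geq 2$ and $n\in N$, then $\varphi(n)\in\varphi(N)\subseteq M_0$ and $\varphi(n_i)\notin M_0$, hence $\varphi(x)\notin M_0$. Finally, replacing $M_0$ by its normal core $M$ in $\F(A)$ (still of finite index) preserves the inclusion $\varphi^{-1}(M)\subseteq N$, completing the proof.
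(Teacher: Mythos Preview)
Your argument is correct. The paper takes a more topological route: it sets $H=\varphi(\F(B))$, observes that $\varphi$ is an isomorphism $\F(B)\to H$ and hence extends to an isomorphism of profinite completions $\widehat{\F(B)}\cong\widehat{H}$, and then invokes Proposition~\ref{propositionFGClosed} (that $H$ is closed in $\F(A)$) to identify $\widehat{H}$ with the closure $\bar{H}$ inside $\widehat{\F(A)}$. Your approach instead works directly with the inverse-limit description, proving the explicit cofinality statement that every finite-index $N\le\F(B)$ contains some $\varphi^{-1}(M)$ with $M$ of finite index in $\F(A)$, via a coset-separation argument using Theorem~\ref{theoremHall}. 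The two arguments encode the same fact---that the profinite topology on $\F(A)$ induces the full profinite topology on $H$---but yours makes this step explicit, whereas in the paper it is hidden in the passage from ``$H$ is closed'' to ``$\widehat{H}=\bar{H}$'' (closure alone is weaker than what is needed there; one really uses that $H$ is a virtual free factor). Your version is longer but more self-contained; the paper's is terser but leans more heavily on the reader.
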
 
\begin{proof}
Let $H=\varphi(\F(B))$. Then $\varphi$ is an isomorphism between
$\F(B)$ and $H$ which extends to an isomorphism between
$\widehat{\F(B)}$ and $\widehat{H}$, the completion of $H$ with
respect to the profinite metric. But, by Proposition~\ref{propositionFGClosed},
the subgroup $H$ is closed in $\F(A)$ and thus $\widehat{H}$ is
the same as the closure $\bar{H}$ in $\widehat{FG(A)}$, which shows
that $\widehat{\varphi}$ is an injective morphism from
$\widehat{\F(B)}$ into $\widehat{\F(A)}$.
\end{proof}

\subsection{Recognizable sets}
A subset $X$ of a monoid $M$ is \emph{recognizable}\index{recognizable!set} if
it is recognized by a morphism into a finite monoid, that is, if
 there is a morphism
$\varphi:M\rightarrow N$ into a finite monoid $N$ which recognizes $X$.
We also say that $X$ is recognized by $\varphi$.
\begin{proposition}\label{propositionRecognizableClopen}
The following conditions are equivalent for a set $X\subset A^*$.
\begin{enumerate}
\item[\rm(i)] $X$ is recognizable.
\item[\rm(ii)] the closure $\bar{X}$ of $X$ in $\widehat{A^*}$
is open and $X=\bar{X}\cap A^*$.
\item[\rm(ii)] $X=K\cap A^*$ for some clopen set $K\subset \widehat{A^*}$.
\end{enumerate} 
\end{proposition}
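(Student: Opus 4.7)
The plan is to prove the cycle (i)$\Rightarrow$(ii)$\Rightarrow$(iii)$\Rightarrow$(i), using the universal property of $\widehat{A^*}$ for one direction and Proposition~\ref{propositionClopen} for the other.

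For (i)$\Rightarrow$(ii), start from a morphism $\varphi:A^*\to N$ into a finite monoid recognizing $X$, and set $P=\varphi(X)$, so that $X=\varphi^{-1}(P)$. The universal property of $\widehat{A^*}$ yields a unique continuous extension $\hat{\varphi}:\widehat{A^*}\to N$, and since $N$ is finite and discrete, $K:=\hat{\varphi}^{-1}(P)$ is clopen in $\widehat{A^*}$. Obviously $K\cap A^*=\varphi^{-1}(P)=X$. The key point is to show $\bar{X}=K$. The inclusion $\bar{X}\subset K$ is immediate since $K$ is closed and contains $X$. For the reverse inclusion, take $k\in K$; since $A^*$ is dense in $\widehat{A^*}$ and $K$ is open, every neighborhood of $k$ meets $K\cap A^*=X$, so $k\in\bar{X}$. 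Hence $\bar{X}=K$ is clopen, in particular open, and $\bar{X}\cap A^*=X$.

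The implication (ii)$\Rightarrow$(iii) is trivial, taking $K=\bar{X}$. For (iii)$\Rightarrow$(i), apply Proposition~\ref{propositionClopen} to the profinite monoid $\widehat{A^*}$: the clopen set $K$ is recognized by a continuous morphism $\varphi:\widehat{A^*}\to T$ into a finite monoid $T$, meaning $K=\varphi^{-1}\varphi(K)$. Let $\psi$ denote the restriction of $\varphi$ to $A^*$; it is a morphism into the finite monoid $T$. To see $X=\psi^{-1}\psi(X)$, take $x\in A^*$ with $\psi(x)=\psi(y)$ for some $y\in X\subset K$. Then $\varphi(x)\in\varphi(K)$, so $x\in\varphi^{-1}\varphi(K)=K$, and therefore $x\in K\cap A^*=X$. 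Thus $X$ is recognized by $\psi$.

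The main obstacle is the careful verification in (i)$\Rightarrow$(ii) that the clopen preimage $\hat{\varphi}^{-1}(P)$ coincides with $\bar{X}$ and not merely contains it; this uses both the density of $A^*$ in $\widehat{A^*}$ and the openness of $\hat{\varphi}^{-1}(P)$. Everything else is essentially a translation between the two languages (recognizability by a finite monoid morphism versus clopen sets in the profinite completion), facilitated by the universal property of $\widehat{A^*}$ and by Proposition~\ref{propositionClopen}.
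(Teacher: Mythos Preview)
Your proof is correct and follows essentially the same route as the paper's: the cycle (i)$\Rightarrow$(ii)$\Rightarrow$(iii)$\Rightarrow$(i) via the universal property of $\widehat{A^*}$ for the first implication and Proposition~\ref{propositionClopen} for the last. Your argument is in fact more careful than the paper's in the step (i)$\Rightarrow$(ii), where you explicitly verify $\bar{X}=\hat{\varphi}^{-1}(P)$ using the density of $A^*$ and the openness of the preimage; the paper states this rather tersely.
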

\begin{proof}
Assume that $X$ is recognized by a morphism $\varphi:A^*\rightarrow S$
from $A^*$ into a finite monoid $S$. By the universal property of 
$\widehat{A^*}$, there is a unique continuous morphism $\hat{\varphi}$
extending $\varphi$. Then $X=\hat{\varphi}^{-1}\varphi(X)$ is open
and satisfies $X=\bar{X}\cap A^*$. Thus (i)$\Rightarrow$ (ii).
The implication (ii)$\Rightarrow$ (iii) is trivial. Finally,
assume that (iii) holds. By Proposition~\ref{propositionClopen} there
exists a continuous morphism $\psi:\widehat{A^*}\rightarrow S$
into a finite monoid $S$ which recognizes $K$. Let $\varphi$
be the restriction of $\psi$ to $A^*$. Then $X=A^*\cap K=A^*\cap \psi^{-1}\psi(K)$
and so $X$ is recognizable.
\end{proof}
As an example, the sets of the form $\widehat{A^*}w\widehat{A^*}=\overline{A^*wA^*}$
are clopen sets and so are the sets $w\widehat{A^*}=\overline{wA^*}$. This shows
that a pseudoword has a well-defined set of finite factors
and a well-defined prefix of every finite length.

The analogue of Proposition~\ref{propositionClopen} for the pro-$G$ topology
is also true (it actually holds in the pro-$V$ topology for any
pseudovariety $V$). Thus a set $X$ is recognizable by a morphim
on a finite group if and only if $X=K\cap A^*$ for some clopen set
$K\subset\widehat{\F(A)}$. In condition (ii), one has to add
that $\bar{X}\cap A^*=X$, a condition always satisfied for the
closure with respect to the pro-$M$ topology.

\subsection{The natural metric}

The \emph{natural metric}\index{profinite!monoid!natural metric} on a profinite monoid $M$ is 
defined by 
\begin{displaymath}
d(u,v)=\begin{cases}2^{-r(u,v)}&\text{if $u\ne v$}\\0&\text{otherwise}\end{cases}
\end{displaymath} where $r(u,v)$ is the minimal
cardinality of a monoid $N$ for which there is a continuous morphism
$\varphi:M\rightarrow N$ such that $\varphi(u)\ne\varphi(v)$.

It is actually an \emph{ultrametric}\index{ultrametric} since it satisfies the condition
\begin{displaymath}
d(u,w)\le\min(d(u,v),d(v,w))
\end{displaymath}
stronger than the triangle inequality.

 \begin{proposition}
For a finitely generated profinite semigroup $S$, the topology is induced
by the natural metric.
\end{proposition}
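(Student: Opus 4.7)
The plan is to first verify that $d$ is really a metric, then show that the identity map from $(S,\tau_{\mathrm{prof}})$ to $(S,\tau_d)$ is continuous, and finally conclude by a standard compact-Hausdorff argument.

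First I would check that $d$ is well defined on $S$. Symmetry and the ultrametric inequality $d(u,w)\le\min(d(u,v),d(v,w))$ are immediate from the definition (if a morphism $\varphi$ separates $u$ and $w$, it must separate $u$ from $v$ or $v$ from $w$). The only substantive point is non-degeneracy: $d(u,v)=0$ forces $u=v$. This is exactly residual finiteness of $S$ as a topological semigroup, which holds by Theorem~\ref{theoremProfiniteSemigroups} since $S$ is profinite. In particular the metric topology $\tau_d$ is Hausdorff.

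Next I would show that every $\tau_d$-open ball $B_{2^{-n}}(u)=\{v\in S\mid r(u,v)>n\}$ is clopen in $\tau_{\mathrm{prof}}$. Fix a finite generating set $A\subseteq S$ whose subsemigroup is dense in $S$. For any finite (discrete) monoid $N$ with $|N|\le n$, a continuous morphism $\varphi:S\to N$ is completely determined by the values $\varphi|_A$, because two continuous maps into a Hausdorff space that agree on a dense set are equal. Hence for each such $N$ there are at most $|N|^{|A|}$ continuous morphisms $S\to N$, and up to isomorphism there are only finitely many monoids of cardinality $\le n$, so altogether only finitely many morphisms $\varphi$ are relevant. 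For each of them, $\varphi^{-1}(\varphi(u))$ is clopen in $\tau_{\mathrm{prof}}$, being the preimage under a continuous map of an isolated point of $N$. Intersecting over this finite collection yields
\begin{displaymath}
B_{2^{-n}}(u)=\bigcap_{\varphi\colon S\to N,\ |N|\le n}\varphi^{-1}(\varphi(u)),
\end{displaymath}
which is therefore clopen in $\tau_{\mathrm{prof}}$. Since balls form a basis of $\tau_d$, this shows $\tau_d\subseteq\tau_{\mathrm{prof}}$, i.e.\ the identity $(S,\tau_{\mathrm{prof}})\to(S,\tau_d)$ is continuous.

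Finally, $(S,\tau_{\mathrm{prof}})$ is compact (as a profinite semigroup) and $(S,\tau_d)$ is Hausdorff, so a continuous bijection between them is automatically a homeomorphism. Hence $\tau_d=\tau_{\mathrm{prof}}$. The main obstacle is the second step, and specifically the use of finite generation: without the hypothesis that $A$ is finite, there could be infinitely many morphisms $S\to N$ for a given finite $N$, and the intersection above would only be guaranteed closed, not open, leaving the containment $\tau_d\subseteq\tau_{\mathrm{prof}}$ unestablished.
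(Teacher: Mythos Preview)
Your proof is correct, and your second step (showing that metric balls are open in the profinite topology via finitely many morphisms determined on a finite generating set) is essentially identical to the paper's ``conversely'' paragraph.

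Where you differ is in the other inclusion. The paper proves $\tau_{\mathrm{prof}}\subseteq\tau_d$ directly: given a clopen $K$, it invokes Proposition~\ref{propositionClopen} to get a continuous morphism $\varphi:S\to T$ into a finite semigroup recognizing $K$, sets $\varepsilon=2^{-\Card(T)}$, and checks that $B_\varepsilon(s)\subset\varphi^{-1}\varphi(s)\subset K$ for each $s\in K$, so that $K$ is a union of metric balls. You instead bypass this computation entirely with the compact--Hausdorff trick: having shown $\tau_d\subseteq\tau_{\mathrm{prof}}$, you note that the identity $(S,\tau_{\mathrm{prof}})\to(S,\tau_d)$ is a continuous bijection from a compact space to a Hausdorff space, hence a homeomorphism. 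This is a legitimate and slightly more economical route; the paper's direct argument has the minor advantage of making explicit the quantitative relation between the size of a recognizing semigroup for a clopen set and the radius of balls it contains, but nothing in the sequel depends on that.
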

\begin{proof}
Denote $B_\varepsilon(u)=\{v\in S\mid d(u,v)<\varepsilon\}$. Let $K$ be a
clopen set in $S$. By Proposition~\ref{propositionClopen} there is a
continuous morphism $\varphi:S\rightarrow T$ into a finite semigroup $T$
which recognizes $K$. Let $n=\Card(T)$ and set $\varepsilon=2^{-n}$.
For $s\in S$ and $t\in B_\varepsilon(s)$, we have $d(s,t)<\varepsilon$
and thus $r(s,t)>n$. It implies that $\varphi(s)=\varphi(t)$.
Since $\varphi$ recognizes $K$, we conclude that $t\in K$.
Thus  the ball $B_\varepsilon(s)$ is
contained in $\varphi^{-1}(t)$ and thus $K$ is a union of open balls.
Thus, since the clopen sets form a basis of the topology, any open set is a union of open balls.

Conversely, consider the open ball $B=B_{2^{-n}}(u)$. Since there is a finite
number of isomorphism types of semigroups with at most $n$ elements, and since
$S$ is finitely generated, there
are finitely many kernels of continuous morphisms into such semigroups
and so their intersection is a clopen congruence on $S$. It follows that
there exists a continuous morphism $\varphi:S\rightarrow T$ into a finite
semigroup such that $\varphi(u)=\varphi(v)$ if and only if $r(u,v)>n$.
Hence $B=\varphi^{-1}\varphi(B)$ so that $B$ is open.
\end{proof}
This leads to an alternative definition of the free profinite monoid.
\begin{theorem}
For a finite set $A$, the completion of  $A^*$ for the natural
metric is the free profinite monoid $\widehat{A^*}$.
\end{theorem}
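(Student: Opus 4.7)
The plan is to verify the three defining properties of a metric completion for the pair $(A^*, \widehat{A^*})$ equipped with the natural metric: namely, that $A^*$ embeds isometrically into $\widehat{A^*}$, that $A^*$ is dense in $\widehat{A^*}$, and that $\widehat{A^*}$ is complete with respect to the natural metric. Since the completion of a metric space is uniquely determined up to isometry by these conditions, this identifies $\widehat{A^*}$ with the completion of $A^*$.

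First I would observe that the natural metric on $A^*$ defined intrinsically (using morphisms $A^*\to N$ into finite monoids) agrees with the restriction to $A^*$ of the natural metric on $\widehat{A^*}$ (defined using continuous morphisms $\widehat{A^*}\to N$). This is essentially the universal property of $\widehat{A^*}$: every morphism $\varphi:A^*\to N$ into a finite monoid extends uniquely to a continuous morphism $\hat{\varphi}:\widehat{A^*}\to N$, and conversely any continuous morphism $\widehat{A^*}\to N$ restricts to such a $\varphi$. Hence the quantity $r(u,v)$ is the same whether computed inside $A^*$ or inside $\widehat{A^*}$, so the inclusion $A^*\hookrightarrow \widehat{A^*}$ is isometric.

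Next, $A^*$ is dense in $\widehat{A^*}$ by construction: $\widehat{A^*}$ is an $A$-generated profinite monoid, so the subsemigroup generated by the image of $A$ (which is exactly $A^*$, since the embedding $A^*\hookrightarrow \widehat{A^*}$ is injective as the profinite topology separates words) is dense. Moreover, $\widehat{A^*}$ is compact, being a profinite semigroup, and by the preceding proposition its topology is induced by the natural metric (since it is finitely generated by $A$). A compact metric space is automatically complete, so $\widehat{A^*}$ is complete for the natural metric.

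Combining these three points, $\widehat{A^*}$ is a complete metric space that contains $A^*$ as a dense isometric subspace, hence it is (isometric to) the completion of $A^*$ for the natural metric. I expect the main subtlety to be the first step, namely the careful verification that the intrinsic natural metric on $A^*$ coincides with the one induced from $\widehat{A^*}$; everything else (density, compactness, and the consequent completeness) is essentially built into the definition of $\widehat{A^*}$ and the previous proposition.
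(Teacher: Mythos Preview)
Your proposal is correct. The paper does not actually give a proof of this theorem: it simply states it as an ``alternative definition of the free profinite monoid'' immediately after the proposition asserting that the topology on a finitely generated profinite semigroup is induced by the natural metric, and then moves on. Your three-step argument (isometric embedding via the universal property, density of $A^*$ by construction, completeness via compactness and the preceding proposition) is precisely the standard way to fill in this gap, and it is consistent with what the paper evidently has in mind when it says ``This leads to an alternative definition\ldots''.
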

\paragraph{Presentations of profinite semigroups}
A congruence of a profinite semigroup is called \emph{admissible}\index{admissible congruence}
if its classes are closed and the quotient is profinite.
In other terms,  admissible congruences are the kernels of
continuous homomorphisms into profinite monoids.

Given a set $X$ and a binary relation $R$ on the monoid
$\widehat{X^*}$, the profinite semigroup $\langle X\mid R\rangle$
is the quotient of $\widehat{X^*}$ by the admissible
congruence generated by $R$. It is also said to have the
\emph{presentation}\index{monoid!presentation} $\langle X\mid R\rangle$.

The same notion holds for groups instead of semigroups
and use the notation $\langle X\mid R\rangle_S$
or $\langle X\mid R\rangle_G$ to specify if the
presentation is as a profinite semigroup or as a profinite group.

The following is \cite[Lemma 2.2]{AlmeidaCosta2013}.
\begin{proposition}\label{lemma2.2}
Let $S$ be a profinite semigroup and let $\varphi$ be an automorphism
of $S$. Let $\pi$ be a continuous homomorphism from $\widehat{A^*}$
onto $S$ and let $\Phi$ be a continuous endomorphism of $\widehat{A^*}$
such that the diagram below commutes.
\begin{figure}[hbt]
\centering
\gasset{Nframe=n,Nadjust=wh}
\begin{picture}(15,15)
\node(Sl)(0,0){$S$}\node(Sr)(15,0){$S$}
\node(A^*l)(0,15){$\widehat{A^*}$}\node(A^*r)(15,15){$\widehat{A^*}$}

\drawedge(Sl,Sr){$\varphi$}\drawedge(A^*l,A^*r){$\Phi$}
\drawedge(A^*l,Sl){$\pi$}\drawedge(A^*r,Sr){$\pi$}
\end{picture}
\end{figure}
Then $S$ has the presentation $\langle A\mid R\rangle$ with
$R=\{(\Phi^\omega(a),a)\mid a\in A\}$.
\end{proposition}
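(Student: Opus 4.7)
The strategy is to prove that $\ker\pi$ coincides with the admissible congruence $\rho_0$ generated by $R$.

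For the inclusion $\rho_0\subseteq\ker\pi$, I would iterate the commuting square to get $\pi\circ\Phi^n=\varphi^n\circ\pi$ for every $n\ge 1$, then pass to the $\omega$-limit inside the profinite endomorphism monoids $\End(\widehat{A^*})$ and $\End(S)$, which are profinite by Theorem~\ref{theoremEndomorphismMonoid} (as $\widehat{A^*}$ and $S=\pi(\widehat{A^*})$ are finitely generated). This yields $\pi\circ\Phi^\omega=\varphi^\omega\circ\pi$. The decisive step is that $\varphi^\omega=\mathrm{id}_S$: the group of units of a compact monoid is always closed (both the left- and right-invertibility conditions cut out closed sets by a standard cluster-point argument using compactness), hence a compact subgroup of $\End(S)$, and the only idempotent in a compact group is its identity. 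Since $\varphi$ is a unit, this forces $\varphi^\omega=\mathrm{id}_S$, so $\pi=\pi\circ\Phi^\omega$, giving $R\subseteq\ker\pi$ and thus $\rho_0\subseteq\ker\pi$.

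For the reverse inclusion, put $T:=\widehat{A^*}/\rho_0$ with quotient $q$ and let $\tilde\pi:T\to S$ be the induced continuous surjection, so $\tilde\pi\circ q=\pi$; it suffices to show $\tilde\pi$ is injective. As a preliminary, I would verify that $\Phi$ descends to a continuous endomorphism $\bar\Phi$ of $T$. By continuity and density of $A$ in $\widehat{A^*}$, the identity $q\circ\Phi^\omega=q$ holds, so $q(\Phi(\Phi^\omega(a)))=q(\Phi^\omega(\Phi(a)))=q(\Phi(a))$, showing that the admissible congruence $\{(u,v):q(\Phi u)=q(\Phi v)\}$ contains $R$ and hence $\rho_0$. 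The same density argument gives $\bar\Phi^\omega=\mathrm{id}_T$, which forces $\bar\Phi$ to be an automorphism of $T$: any cluster point in the profinite monoid $\End(T)$ of the sequence $\bar\Phi^{n!-1}$ is a two-sided inverse of $\bar\Phi$. The descended diagram reads $\tilde\pi\circ\bar\Phi=\varphi\circ\tilde\pi$.

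The main obstacle is then to prove $\tilde\pi$ injective. The closed congruence $\ker\tilde\pi$ on $T$ is $(\bar\Phi\times\bar\Phi)$-invariant, and my plan is to rule out its nontriviality by testing against every finite-index admissible congruence $\sigma$ of $\widehat{A^*}$ with $\rho_0\subseteq\sigma$. On the finite quotient $N=\widehat{A^*}/\sigma$ the sequence $\Phi^{n!}$ must stabilize at the identity (being forced to converge there by $q\circ\Phi^\omega=q$), so $\Phi$ acts as an automorphism of finite order on $N$; after replacing $\sigma$ by the finite intersection $\bigcap_{k<n_0!}\Phi^{-k}(\sigma)$, which still has finite index, one may assume $\sigma$ is $\Phi$-invariant. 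Converting the resulting finite equivariant quotient into a factorization through $(S,\pi,\varphi)$---which would yield $\ker\pi\subseteq\sigma$---is the delicate point, and I would expect to close the gap by an argument in the spirit of Proposition~\ref{propositionHopfian} applied to the equivariant surjection $\tilde\pi$ that intertwines the finite-order automorphism on $N$ with $\varphi$ on $S$.
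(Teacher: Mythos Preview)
The paper does not actually prove this proposition; it merely cites \cite[Lemma 2.2]{AlmeidaCosta2013}. So there is no proof in the paper to compare your attempt against, and I can only assess your argument on its own merits.

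Your first direction ($\rho_0\subseteq\ker\pi$) is correct and cleanly argued: iterating the square, passing to $\omega$ in the profinite endomorphism monoid, and using that $\varphi^\omega=\mathrm{id}_S$ because $\varphi$ lies in the (closed) group of units, all work exactly as you say. The reduction $T\cong \widehat{A^*}/\ker(\Phi^\omega)\cong \Phi^\omega(\widehat{A^*})$ that your setup implicitly yields (via $q\circ\Phi^\omega=q$ together with $\rho_0\subseteq\ker\Phi^\omega$) is also correct and useful.

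The second direction, however, is genuinely incomplete---and you are right to flag it. You reduce to showing that $\tilde\pi:T\to S$ is injective, set up the descended automorphism $\bar\Phi$, and then propose to test against $\Phi$-invariant finite-index congruences; but the step ``convert the finite equivariant quotient into a factorization through $(S,\pi,\varphi)$'' is exactly the heart of the matter, and no argument is given. A Hopfian-type argument cannot rescue this in general: Proposition~\ref{propositionHopfian} concerns a surjection from a finitely generated profinite object \emph{to itself}, whereas here $T$ and $S$ are a priori different. In fact, the statement as literally written in the paper is false without further hypotheses: take $S=\{e\}$ trivial, $\varphi=\mathrm{id}_S$, $\Phi=\mathrm{id}_{\widehat{A^*}}$; the diagram commutes trivially, yet $\Phi^\omega=\mathrm{id}$ gives $R=\{(a,a):a\in A\}$ and $\langle A\mid R\rangle=\widehat{A^*}\not\cong\{e\}$. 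So the gap you identified cannot be closed as stated; the original lemma in \cite{AlmeidaCosta2013} carries an additional structural hypothesis (essentially that $\pi$ is itself $\Phi^\omega$, as in the application to Theorem~\ref{theoremPresentationProper}) that the survey has dropped.
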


\section{Profinite codes}\label{sectionProfiniteCodes}
We will expore the notion of a code in 
the free profinite
monoid. 
\subsection{Finite codes}
Let $A,B$ be finite alphabets.
Any morphism $\beta:B^*\rightarrow A^*$ extends uniquely by continuity
to  a continuous morphism  $\hat{\beta}:\widehat{B^*}\rightarrow \widehat{A^*}$.
A finite set $X\subset \widehat{A^*}$ is called a \emph{profinite code}\index{profinite!code}\index{code!profinite}
if the continuous extension $\hat{\beta}$
of any morphism $\beta:B^*\rightarrow A^*$
inducing a bijection from $B$ onto $X$ 
is injective.

The following statement is from~\cite{MargolisSapirWeil1998}.
\begin{theorem}\label{theoremCodeProfinite}
Any finite code $X\subset A^+$ is a profinite code.
\end{theorem}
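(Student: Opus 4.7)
The plan is to prove that $\hat{\beta}:\widehat{B^*}\to\widehat{A^*}$ is injective by the standard separation argument: given $u\neq v$ in $\widehat{B^*}$, I exhibit a continuous morphism $\hat{\psi}:\widehat{A^*}\to M$ into a finite monoid such that $\hat{\psi}(\hat{\beta}(u))\neq\hat{\psi}(\hat{\beta}(v))$. By Theorem~\ref{theoremProfiniteSemigroups}, $\widehat{B^*}$ is residually finite, so there is a continuous morphism $\hat{\varphi}:\widehat{B^*}\to N$ onto a finite monoid with $\hat{\varphi}(u)\neq\hat{\varphi}(v)$; write $\varphi$ for its restriction to $B^*$. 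It will suffice to build a morphism $\psi:A^*\to M$ into a finite monoid together with a map $\rho:\psi(\beta(B^*))\to N$ satisfying $\rho\circ\psi\circ\beta=\varphi$, for then $\rho\circ\hat{\psi}\circ\hat{\beta}$ and $\hat{\varphi}$ are continuous maps $\widehat{B^*}\to N$ agreeing on the dense subset $B^*$, hence equal, forcing the desired separation.

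The heart of the construction is to use that $\beta$ preserves rationality. For each $n\in N$ the fiber $\varphi^{-1}(n)\subseteq B^*$ is recognizable, hence rational by Kleene's theorem, so its image $R_n:=\beta(\varphi^{-1}(n))\subseteq A^*$ is rational (as a morphic image of a rational set) and therefore recognizable in the free monoid $A^*$ by some morphism $\psi_n$ into a finite monoid. I would take $\psi:A^*\to M$ to be the product of the $\psi_n$, so that $M$ is finite and $\psi$ simultaneously recognizes each $R_n$. The hypothesis that $X$ is a code is used exactly at this point: since $\beta$ is injective, for $w\in B^*$ one has $\beta(w)\in R_n$ if and only if $w\in\varphi^{-1}(n)$. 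Consequently, if $\psi(\beta(w_1))=\psi(\beta(w_2))$ then $\beta(w_1)$ and $\beta(w_2)$ lie in the same $R_n$, which forces $\varphi(w_1)=\varphi(w_2)$. This yields a well-defined map $\rho$ on $\psi(X^*)$ with the required factorization, where $X^*=\beta(B^*)$.

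To transfer the identity to the completions, observe that $\hat{\beta}(\widehat{B^*})\subseteq\overline{X^*}$ inside $\widehat{A^*}$, and that $\psi(X^*)$, being a finite subset of the discrete monoid $M$, is closed; hence $\hat{\psi}(\hat{\beta}(\widehat{B^*}))\subseteq\overline{\psi(X^*)}=\psi(X^*)$, so $\rho$ is defined on the image and is trivially continuous. By density of $B^*$ and continuity of all the maps involved, $\rho\circ\hat{\psi}\circ\hat{\beta}=\hat{\varphi}$ globally on $\widehat{B^*}$, giving $\hat{\beta}(u)\neq\hat{\beta}(v)$.

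The only substantive ingredient in this plan is the preservation of rationality by monoid morphisms together with Kleene's equality of rational and recognizable subsets in a free monoid; the remainder is formal manipulation with projective limits. The step I expect to be the main obstacle, if one is not careful, is justifying that $\rho$ is well defined on the larger set $\hat{\psi}(\hat{\beta}(\widehat{B^*}))$ rather than merely on $\psi(X^*)$; but the finiteness of $M$ collapses this issue, since the image of the compact $\widehat{B^*}$ lands inside the already-controlled finite set $\psi(X^*)$.
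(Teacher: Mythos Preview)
Your argument is correct, and it is genuinely different from the paper's. The paper separates $u$ and $v$ by an explicit construction: it builds the prefix transducer of the code, forms the monoid of $P\times P$-matrices over $N\cup\{0\}$ (with $P$ the proper prefixes of $X$), and reads $\psi(y)$ off as the $(1,1)$-entry of the matrix associated to $\beta(y)$. You instead invoke Kleene's theorem abstractly: push the recognizable fibers $\varphi^{-1}(n)$ through $\beta$, recognize the resulting rational sets $R_n$ simultaneously by a single $\psi$, and use injectivity of $\beta$ to see that the $R_n$ partition $X^*$, so $\psi(\beta(w))$ determines $\varphi(w)$.

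What each buys: the paper's route is constructive and self-contained (no appeal to Kleene), and it makes visible the combinatorics of decoding --- the matrix monoid is essentially the syntactic object of the transducer. Your route is shorter and uses nothing about codes beyond the injectivity of $\beta$; in effect you are proving the general fact that any injective morphism between finitely generated free monoids extends to an injective map between their profinite completions. The only point worth making explicit in a write-up is that the $R_n$ are pairwise disjoint (this is where injectivity of $\beta$ enters), so that the $\psi(R_n)$ partition $\psi(X^*)$ and $\rho$ is well defined; you have this, but it is the crux and deserves a sentence of its own.
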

\begin{proof} Let $\beta:B^*\rightarrow A^*$ be a coding morphism for $X$.
We have to show that for any pair $u,v\in \widehat{B^*}$
of  distinct elements, we have $\hat{\beta}(u)\ne\hat{\beta}(v)$,
that is, there
is a continuous morphism $\hat{\alpha}:\widehat{A^*}\rightarrow M$
into a finite monoid $M$
such that $\hat{\alpha}\hat{\beta}(u)\ne\hat{\alpha}\hat{\beta}(v)$.
For this, let $\psi:\widehat{B^*}\rightarrow N$ be a continuous morphism
into a finite monoid $N$ such that $\psi(u)\ne\psi(v)$.
Let $P$ be the set of proper prefixes of $X$ and 
let $\T$ be the prefix transducer associated to $\beta$
(see~\cite{BerstelPerrinReutenauer2009}). Let
$\alpha$ be the morphism from $A^*$ into the monoid of $P\times P$-matrices
with elements in $N\cup 0$ defined as follows. For $x\in A^*$ and $p,q\in P$,
we have 
\begin{displaymath}
\alpha(x)_{p,q}=\begin{cases}\psi(y)&\text{if there is a path $p\edge{x|y}q$}\\
0&\text{otherwise}.
\end{cases}
\end{displaymath}
Then $M=\alpha(A^*)$ is a finite monoid and 
$\alpha$ extends to a continuous morphism $\hat{\alpha}:\widehat{A^*}
\rightarrow M$.
Since, by \cite[Proposition 4.3.2]{BerstelPerrinReutenauer2009}, the transducer $\T$
realizes the decoding function of $X$,
we have $\alpha\beta(y)_{1,1}=\psi(y)$ for any $y\in B^*$. By continuity, we have
$\hat{\alpha}\hat{\beta}(y)_{1,1}=\psi(y)$
for any $y\in \widehat{B^*}$. Then $\hat{\alpha}$
is such that  $\hat{\alpha}\hat{\beta}(u)\ne\hat{\alpha}\hat{\beta}(v)$. Indeed 
$\hat{\alpha}\hat{\beta}(u)_{1,1}=\psi(u)\ne\psi(v)=\hat{\alpha}\hat{\beta}(v)_{1,1}$.
\end{proof}
\begin{example}
Let $A=\{a,b\}$ and $X=\{a,ab,bb\}$. The set $X$ is a suffix code. It has infinite
deciphering delay since $abb\cdots=a(bb)(bb)\cdots=(ab)(bb)(bb)\cdots$. Nonetheless,
$X$ is a profinite code in agreement with Theorem~\ref{theoremCodeProfinite}.
Note that, in $\widehat{A^*}$, the pseudowords $a(bb)^\omega$ and $ab(bb)^\omega$
are distinct (the first one is a limit of words of odd length and the
second one of words of even length).
\end{example}
Theorem~\ref{theoremCodeProfinite} shows that the
closure of the submonoid generated by a finite code
is a free profinite monoid. This has been extended to rational 
codes in~\cite{AlmeidaSteinberg2009}. Actually, for any
rational code $X$, the profinite submonoid generated by $X$
is free with basis the closure $\bar{X}$ of $X$
\cite[Corollary 5.7]{AlmeidaSteinberg2009}.
Note that we have here a free profinite monoid with infinite
basis (see~\cite{AlmeidaSteinberg2009} for an extension of the
notion of free profinite monoid to infinite alphabets).
\begin{example}
Let $A=\{a,b\}$ and $X=a^*b$. Then the profinite monoid $\widehat{X^*}$ is free
with basis the uncoutable set $\bar{X}=\{a^v b\mid v\in\hat{\N}\}$.
\end{example}
\subsection{Codes and submonoids}
A submonoid $N$ of a monoid $M$
 is called \emph{stable}\index{stable}\index{submonoid!stable}
if for any $u,v,w\in M$, whenever $u,vw,uv,w\in N$
then $v\in N$. It is called \emph{right unitary}\index{right unitary}
if for every $u,v\in A^*$, $u,uv\in N$ implies $v\in N$.
It is well known that a submonoid of $A^*$ is stable
if and only if it is generated by a code
and it is unitary if and only if it is generated by a prefix code.

The following statement extends these notions to pseudowords.
\begin{proposition}\label{propositionUnitary}
Let $N$ be a recognizable submonoid of $A^*$. If $N$
is stable (resp. unitary) its closure 
in $\widehat{A^*}$ is a stable (resp. unitary)
submonoid of $\widehat{A^*}$.
\end{proposition}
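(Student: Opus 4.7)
The plan is to use the recognizability hypothesis to pull everything back to a finite monoid, where stability/unitarity is witnessed by a concrete finite word, and then to use density of $A^*$ in $\widehat{A^*}$ to approximate a given pseudoword by a finite word with the same image in that monoid.

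First, I would check that $\bar{N}$ is a submonoid of $\widehat{A^*}$. Since $N$ contains $1$, so does $\bar{N}$. If $x,y \in \bar{N}$, pick nets $x_n, y_n \in N$ with $x_n \to x$ and $y_n \to y$. Since multiplication is continuous on the topological monoid $\widehat{A^*}$, we have $x_n y_n \to xy$, and $x_n y_n \in N$ because $N$ is a submonoid, hence $xy \in \bar{N}$.

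Next I would set up the recognizer. Because $N$ is recognizable, Proposition~\ref{propositionRecognizableClopen} gives that $\bar{N}$ is clopen in $\widehat{A^*}$ and $N=\bar{N}\cap A^*$. Proposition~\ref{propositionClopen} then yields a continuous morphism $\hat\varphi:\widehat{A^*}\to M$ onto a finite monoid $M$ and a subset $P\subset M$ with $\bar{N}=\hat\varphi^{-1}(P)$; restricting to $A^*$ gives $N=\varphi^{-1}(P)$ with $\varphi=\hat\varphi|_{A^*}$.

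Now assume $N$ is stable, and let $u,v,w\in\widehat{A^*}$ satisfy $u,uv,vw,w\in\bar{N}$. Since each fiber $\hat\varphi^{-1}(m)$ is open and $A^*$ is dense in $\widehat{A^*}$, I can choose $u',v',w'\in A^*$ with $\hat\varphi(u')=\hat\varphi(u)$, $\hat\varphi(v')=\hat\varphi(v)$, and $\hat\varphi(w')=\hat\varphi(w)$. Then $\varphi(u')\in P$, and since $\hat\varphi$ is a morphism we also get $\varphi(u'v')=\hat\varphi(u)\hat\varphi(v)=\hat\varphi(uv)\in P$ and likewise $\varphi(v'w'),\varphi(w')\in P$. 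Hence $u',u'v',v'w',w'\in N$, so stability of $N$ gives $v'\in N$, whence $\hat\varphi(v)=\varphi(v')\in P$ and $v\in\bar{N}$. This establishes stability of $\bar{N}$.

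The unitary case is the same argument with one fewer element: given $u,uv\in\bar{N}$, approximate $u,v$ by words $u',v'\in A^*$ with matching $\hat\varphi$-images; then $u',u'v'\in N$, right-unitarity of $N$ yields $v'\in N$, and therefore $v\in\bar{N}$. I expect the only mildly delicate point to be the justification that one can simultaneously approximate several pseudowords by finite words whose images under $\hat\varphi$ coincide with the originals, but this is immediate from the openness of the fibers $\hat\varphi^{-1}(m)$ and the density of $A^*$ in $\widehat{A^*}$; no further work is needed.
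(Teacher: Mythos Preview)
Your proof is correct and follows essentially the same strategy as the paper: use recognizability to show $\bar N$ is clopen, then reduce the stability (resp.\ right-unitary) condition for pseudowords to the same condition for finite words in $N$. The only tactical difference is that the paper argues with converging sequences $(u_n),(v_n),(w_n)$ and uses openness of $\bar N$ to force $u_nv_n,v_nw_n\in N$ for large $n$, whereas you pick single finite representatives $u',v',w'$ in the fibers of the recognizing morphism $\hat\varphi$; both routes reach the same reduction and the conclusion follows identically.
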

\begin{proof}
The set $\bar{N}$ is clearly a submonoid of $\widehat{A^*}$.
Assume that $N$ is stable.
Let $u,v,w\in \widehat{A^*}$ be such that $uv,w,u,vw\in \bar{N}$.
Let $(u_n),(v_n)$ and $(w_n)$ be  sequences of
words  converging   to $u,v$ and $w$ respectively
with $u_n,w_n\in N$. By Proposition~\ref{propositionRecognizableClopen},
the closure
$\bar{N}$ of the submonoid $N$ is open in $\widehat{A^*}$ and 
$N=\bar{N}\cap A^*$. Since $\bar{N}$
is open, we have $u_nv_n,v_nw_n\in\bar{N}$
for large enough $n$ and thus also
$u_nv_n,v_nw_n\in N$. Since $N$ is stable, this implies $v_n\in N$
for large enough $n$, which implies $v\in\bar{N}$.
Thus $\bar{N}$ is stable.
The proof when $N$ is unitary is similar.
\end{proof}
When $X$ is a prefix code, every word $w$ can be written in a unique way
$w=xp$ with $x\in X^*$ and $p\in A^*\setminus XA^*$, a word without any prefix in $X$. When $X$ is a maximal prefix code, a word which has no prefix
in $X$ is a proper prefix of a word in $X$ and
thus every word $w$ can be written in a unique way $w=xp$ with
$x\in X^*$ and $p$ a proper prefix of $X$.
This extends to pseudowords as follows.
\begin{proposition}\label{propositionUnitary2}
Let $X\subset A^+$ be a finite maximal
prefix code and let $P$ be its set of proper prefixes.
 Any pseudoword $w\in\widehat{A^*}$
has a unique factorization $w=xp$ with $x\in \overline{X^*}$ and $p\in P$.
\end{proposition}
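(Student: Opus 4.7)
The plan is to lift to $\widehat{A^*}$ the elementary fact that for a finite maximal prefix code $X$, every word $w\in A^*$ admits a unique factorization $w=xp$ with $x\in X^*$ and $p\in P$. I would handle existence by approximation, then split uniqueness into ``the $p$-part is unique'' and, granted that, ``the $x$-part is unique''.

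For existence, take $w_n\in A^*$ with $w_n\to w$ and factor $w_n=x_np_n$ with $x_n\in X^*$, $p_n\in P$. Since $P$ is finite, a subsequence has $p_n$ constant, equal to some $p$; by compactness of $\widehat{A^*}$, a further subsequence has $x_n$ convergent to some $x\in\overline{X^*}$, and continuity of multiplication gives $w=xp$.

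For uniqueness of $p$, observe that the sets $X^*p$ ($p\in P$) partition $A^*$ and each is recognizable, being a class of the morphism from $A^*$ into the transition monoid of the deterministic decoding automaton with state set $P$, initial state $1$, and transitions $q\cdot a=qa$ if $qa\in P$ and $q\cdot a=1$ if $qa\in X$ (well defined because $X$ is a maximal prefix code). Disjoint recognizable subsets of $A^*$ have disjoint closures in $\widehat{A^*}$ by Proposition~\ref{propositionRecognizableClopen}, applied to a common recognizing morphism. Since right multiplication by $p$ is continuous on the compact monoid $\widehat{A^*}$, one has $\overline{X^*p}=\overline{X^*}\,p$, so the sets $\overline{X^*}\,p$ with $p\in P$ are pairwise disjoint, forcing uniqueness of the $p$-part.

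For uniqueness of $x$, suppose $xp=x'p$ in $\widehat{A^*}$ with $x,x'\in\overline{X^*}$ and $x\ne x'$; pick a continuous morphism $\hat\psi:\widehat{A^*}\to N$ into a finite monoid with $\hat\psi(x)\ne\hat\psi(x')$, and approximating sequences $u_n\to x$, $v_n\to x'$ in $X^*$, so that $\psi(u_n)\ne\psi(v_n)$ eventually. The main obstacle is then to build a single morphism $\chi:A^*\to M$ into a finite monoid that detects this discrepancy through the common suffix $p$. I would use a \emph{tagged} decoding automaton on state set $P\times N$ with initial state $(1,1)$ and transitions $(q,s)\cdot a=(qa,s)$ if $qa\in P$ and $(q,s)\cdot a=(1,\,s\psi(qa))$ if $qa\in X$; by induction on the length of the input, the state reached on $up$ with $u\in X^*$ is $(p,\psi(u))$. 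Consequently $\chi(u_np)\ne\chi(v_np)$ eventually in the finite monoid $M$, while both sequences must converge to $\hat\chi(xp)=\hat\chi(x'p)$, a contradiction.
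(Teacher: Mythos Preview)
Your proof is correct. Existence is handled exactly as in the paper. For uniqueness, however, you take a genuinely different route.

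The paper argues uniqueness in one stroke: given $xp=x'p'$ with $|p|\ge |p'|$, it uses that the finite suffix of a pseudoword is well determined to write $p=up'$, then cancels the finite word $p'$ on the right to get $xu=x'$, and finally invokes Proposition~\ref{propositionUnitary} (the closure $\overline{X^*}$ is right unitary) to force $u\in \overline{X^*}\cap A^*=X^*$, whence $u=\varepsilon$ since $u\in P$. Two facts are used implicitly here: that a pseudoword has a well-defined suffix of each finite length, and that right multiplication by a finite word is injective on $\widehat{A^*}$.

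Your approach avoids Proposition~\ref{propositionUnitary} entirely and makes these implicit steps explicit via automata. You first obtain uniqueness of $p$ by observing that the sets $X^*p$, $p\in P$, are the classes of a single recognizing morphism (the decoding automaton), hence their closures $\overline{X^*}\,p=\overline{X^*p}$ are pairwise disjoint clopen sets. You then obtain uniqueness of $x$ by building a tagged decoding automaton that records $\psi$ of the $X^*$-part, effectively proving the special case of right cancellation by the finite word $p$ that is needed. This is longer but entirely self-contained; the paper's argument is shorter but leans on the unitarity lemma and on cancellation facts that it does not spell out.
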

\begin{proof}
Let $(w_n)$ be a sequence of words converging to $w$. For each $n$,
we have $w_n=x_np_n$ with $x_n\in X^*$ and $p_n\in P$. Taking a subsequence,
we may assume that the sequences $(x_n)$ and $(p_n)$ converge to $x\in\overline{X^*}$
and $p\in\bar{P}$. Since $X$ is finite, $P$ is finite
and thus $\bar{P}=P$. This proves the existence of a factorization.
To prove the uniqueness, consider $xp=x'p'$ with $x,x'\in \widehat{X^*}$
and $p,p'\in  P$. Then, assuming that $p$ is longer than $p'$, we
have $p=up'$ and $xu=x'$ for some $u\in A^*$. Since $\overline{X^*}$ is unitary, we have
$u\in X^*$ and thus $u=\varepsilon$.
\end{proof}
\section{Uniformly recurrent sets}\label{factorialSets}
In this section, we study the closure in the free profinite
monoid of a uniformly recurrent set.
\subsection{Uniformly recurrent pseudowords}
Let $A$ be a finite alphabet. A set of finite words on $A$ is \emph{factorial}
\index{factorial set}if it contains the alphabet $A$ and all the factors of its elements.

A factorial set $F$ is \emph{recurrent}\index{recurrent set}\index{factorial set!recurrent} if for
any $x,z\in F$ there is some $y\in F$ such that $xyz\in F$.

Recall that an infinite factorial set $F$ of finite words is said to
be \emph{uniformly recurrent}\index{uniformly recurrent set} or \emph{minimal}
\index{minimal set}\index{factorial set!minimal} if for any $x\in F$ there
is an integer $n\ge 1$ such that $x$ is a factor of every
word in $F$ of length $n$.

A uniformly recurrent set is obviously recurrent.

Note that a uniformly recurrent set is actually minimal for inclusion among
the infinite factorial sets. Indeed, assume first that $F$ is uniformly recurrent. Let $F'\subset F$ be an infinite factorial set. Let $x\in F$ and let
$n$ be such that $x$ is factor of any word of $F$ of length $n$. Since $F'$
is factorial infinite, it contains a word $y$ of length $n$. Since
$x$ is a factor of $y$, it is in $F'$. Thus $F$ is mimimal.
Conversely consider $x\in F$ and let $T$ be the set of words in $F$
which do not contain $x$ as a factor. Then $T$ is factorial. Since
$F$ is minimal among infinite factorial sets, $T$ is finite.
Thus there is an $n$ such that $x$ is a factor of all words of $F$ of length $n$.

A factorial set $F$ is \emph{periodic}\index{periodic set}\index{factorial set!periodic}\index{set!periodic} if it is the set of factors of a finite word $w$. One may always
assume $w$ to be primitive, that is not a power of another word.
In this case, the length of $w$ is called the \emph{period}\index{period}
 of $F$.
A periodic set is obviously uniformly recurrent.

For an infinite pseudoword $w$, we denote by $F(w)$ the set of finite
factors of $w$. It is an infinite factorial set.

An infinite pseudoword $w$ is \emph{uniformly recurrent}\index{uniformly recurrent!pseudoword} if 
$F(w)$ is uniformly recurrent. This is the same definition as
the definition commonly used for infinite words.

The following is~\cite[Lemma 2.2]{Almeida2005b}. 
\begin{proposition}\label{propositionTrivial}
An infinite pseudoword is uniformly recurrent if and only if
all its infinite factors have the same finite factors.
\end{proposition}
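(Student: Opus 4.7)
The plan is to prove both directions by exploiting the fact that for every $x \in A^*$ the sets $x\widehat{A^*}$ and $\widehat{A^*}x\widehat{A^*}$ are clopen in $\widehat{A^*}$, so the conditions ``$x$ is a prefix of $u$'' and ``$x$ is a finite factor of $u$'' are closed under limits. As a preliminary, I will observe that an infinite pseudoword $u$ has a length-$n$ prefix in $A^n$ for every $n \ge 1$: the identity $\widehat{A^*} = A^{<n} \cup A^n\widehat{A^*}$ holds because the right-hand side is closed (a finite union of closed sets) and contains the dense subset $A^*$, so any $u \notin A^*$ must lie in $x\widehat{A^*}$ for some $x \in A^n$, and this $x$ is a finite factor of $u$.

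For the direction $(\Rightarrow)$, let $u$ be an infinite factor of $w$. The inclusion $F(u) \subseteq F(w)$ is immediate from the transitivity of factorhood. Conversely, given $x \in F(w)$, the uniform recurrence of $F(w)$ yields an $n$ such that every length-$n$ word of $F(w)$ has $x$ as a factor. The length-$n$ prefix $p_n$ of $u$ lies in $F(u) \subseteq F(w)$, so $p_n$ contains $x$ as a factor, and hence so does $u$, giving $x \in F(u)$.

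For the direction $(\Leftarrow)$, I argue by contradiction: assume $F(w)$ is not uniformly recurrent. Then some $x \in F(w)$ fails to occur in arbitrarily long words of $F(w)$, so I can choose $y_n \in F(w)$ with $|y_n| \to \infty$ and $x \notin F(y_n)$, and write $w = a_n y_n b_n$ with $a_n, b_n \in \widehat{A^*}$. By compactness of $\widehat{A^*}$ I pass to a subsequence along which $a_n \to a$, $y_n \to y$, $b_n \to b$; by continuity of multiplication $w = ayb$, so $y$ is a factor of $w$. The pseudoword $y$ is infinite because $|y|$ is the limit in $\hat{\N}$ of the distinct natural numbers $|y_n|$ and $\N$ is discrete in $\hat{\N}$, and $x \notin F(y)$ because $\widehat{A^*} \setminus \widehat{A^*}x\widehat{A^*}$ is closed. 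Thus $y$ is an infinite factor of $w$ with $F(y) \subsetneq F(w)$, contradicting the hypothesis.

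The main care I anticipate is in the compactness extraction of the reverse direction: I need to check that the limit $y$ is genuinely an infinite pseudoword and that it genuinely avoids $x$ as a factor. Both points reduce to the observation that the relevant conditions are cut out by closed subsets of $\widehat{A^*}$, together with the discreteness of $\N$ inside $\hat{\N}$ which rules out a finite length for $y$.
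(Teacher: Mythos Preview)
Your proof is correct and follows essentially the same approach as the paper's: the forward direction uses uniform recurrence to force any fixed finite factor of $w$ into a long enough prefix of $u$, and the reverse direction extracts by compactness an infinite factor avoiding a given finite word. Your version is in fact more careful than the paper's, which invokes an unstated ``Proposition~\ref{propositionFactors}'' for the compactness step; you supply the details explicitly (closedness of the factor condition, discreteness of $\N$ in $\hat{\N}$ to ensure the limit is infinite), and your preliminary remark on length-$n$ prefixes makes the forward direction fully rigorous.
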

\begin{proof}
Let $w$ be an infinite pseudoword. Assume first that $w$ is
uniformly recurrent. Let $u$ be an infinite factor of $w$. Let
us show that $F(u)=F(w)$. The inclusion $F(u)\subset F(w)$ is clear.
Conversely, let $x\in F(w)$. Since $w$ is
uniformly recurrent, $x$ is a factor of every long enough finite factor of $w$.
In particular, $x$ is a factor of every long enough finite prefix
of $u$. Thus $x\in F(u)$.

Conversely, assume that $F(w)=F(u)$ for all infinite factors $u$ of $w$.
Let $v$ be a finite factor of $w$. Arguing by contradiction, assume that
there are arbitrary long factors of $w$ which do not have $v$ as a factor.
This infinite set contains, by Proposition~\ref{propositionFactors},
 a subsequence converging to some infinite pseudoword $u$ which is also a factor
of $w$, and such that $v\not\in F(u)$, a contradiction.
\end{proof}

 Recall that
the $\JJ$-order in a monoid $M$ is defined by $x\le_\JJ y$ if 
$x$ is a factor of $y$. Two elements $x,y$ are $\JJ$-equivalent if
each one is a factor of the other (this is one of the \emph{Green's relations}
\index{Greene's relations}, see
\cite{BerstelPerrinReutenauer2009}). 

Replacing the notion of factor by prefix (resp. suffix), one obtains
the $\RR$-order (resp. $\LL$-order).
Thus, two elements $x,y$ of a monoid $M$ are $\RR$-equivalent (resp. $\LL$-equivalent)
if 
$xM=yM$ (resp. $Mx=My$). The $\HH$-equivalence is the intersection of
$\RR$ and $\LL$. In any monoid, one has $\RR\LL=\LL\RR$ and 
one denotes $\DD$ the equivalence $\RR\LL=\LL\RR$ which is the supremum
of $\RR$ and $\LL$. In a compact monoid, one has $\DD=\JJ$. The proof is
the same as in a finite monoid. It uses the fact that a compact
monoid satisfies the \emph{stability condition}\index{stability condition}: if $x\le_\RR y$
and $x\JJ y$, then $x\RR y$ and dually for $\LL$.

The following result is~\cite[Theorem 2.6]{Almeida2005b}. It gives an algebraic characterization
of uniform recurrence in the free profinite monoid.
\begin{theorem}\label{theoremUniformRecurrence}
An infinite pseudoword is uniformly recurrent
if and only if it is $\JJ$-maximal.
\end{theorem}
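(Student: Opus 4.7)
The plan is to use Proposition~\ref{propositionTrivial} as the bridge between uniform recurrence and the $\JJ$-order, together with a closedness argument in the compact monoid $\widehat{A^*}$.

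For ($\Leftarrow$), I would argue contrapositively. If $w$ is not uniformly recurrent, Proposition~\ref{propositionTrivial} supplies an infinite factor $u$ of $w$ with $F(u)\subsetneq F(w)$; pick $x\in F(w)\setminus F(u)$. Using $u$ and $x$, I would construct an infinite pseudoword $v$ sitting strictly above $w$ in $\leq_\JJ$: a natural candidate mixes $x$ with $w$ so that $F(v)$ contains a finite factor witnessing non-factorability in $w$ (with convergence of the construction secured by compactness of $\widehat{A^*}$). Such a $v$ cannot be a factor of $w$, contradicting $\JJ$-maximality.

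For ($\Rightarrow$), suppose $w$ is infinite and uniformly recurrent, and let $v=\alpha w\beta$ be infinite with $w\leq_\JJ v$. The goal is to show $v$ is itself a factor of $w$, i.e.\ $v\in D(w):=\{u\in\widehat{A^*}:u\leq_\JJ w\}$. The set $D(w)$ is closed, being the continuous image (via projection on the first coordinate) of the closed set $\{(u,s,t)\in\widehat{A^*}^3:sut=w\}$ inside the compact product $\widehat{A^*}^3$. I would approximate $\alpha,w,\beta$ by finite words $\alpha_n,w_n,\beta_n\in F(w)$, apply the recurrence of $F(w)$ (a consequence of uniform recurrence) to obtain connecting words $x_n,y_n\in A^*$ with $\alpha_n x_n w_n y_n\beta_n\in F(w)\subseteq D(w)$, and extract convergent subsequences $(x_n,y_n)\to(x,y)$ by compactness. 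Passing to the limit yields $\alpha x w y\beta\in D(w)$.

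The main obstacle is then absorbing the connecting pseudowords $x,y$ to obtain the desired $v=\alpha w\beta\in D(w)$. I would handle this by replacing $x,y$ with their $\omega$-powers $x^\omega,y^\omega$, which are idempotents, and invoking the stability relation $\JJ=\DD$ valid in compact monoids: an idempotent acts as identity on its own $\HH$-class, so inserting $x^\omega$ or $y^\omega$ does not alter the $\JJ$-class of $\alpha w\beta$. Combined with the closedness of $D(w)$, this reduces the argument to the case of trivial connectors and yields $v\leq_\JJ w$, completing the proof.
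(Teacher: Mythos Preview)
Your proposal rests on a reversed reading of the $\JJ$-order (perhaps induced by a typo in the paper's stated definition). In the convention actually used in the proof --- and the only one under which the theorem is true --- $u\ge_\JJ w$ means $u$ is a \emph{factor} of $w$, so ``$\JJ$-maximal infinite pseudoword'' means ``every infinite factor is $\JJ$-equivalent to it''. Your $(\Rightarrow)$ argument instead takes $v=\alpha w\beta$ with $w$ a factor of $v$ and attempts to show $v$ is a factor of $w$. That statement is simply false: for $w=\varphi^\omega(a)$ the Fibonacci pseudoword and $v=bbw$, the factor $bb\in F(v)\setminus F(w)$ prevents $v$ from being a factor of $w$. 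So the whole approximation-and-absorption program is aimed at the wrong target. Even setting aside the direction issue, the absorption step is not a proof: the fact that $x^\omega$ is idempotent tells you it acts as identity on its own $\HH$-class, but you have given no reason why $\alpha$, $w$, or $\alpha w$ lies in that $\HH$-class, so you cannot delete the connecting words this way.

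For $(\Leftarrow)$ your instinct to use Proposition~\ref{propositionTrivial} is right, but the construction you sketch is unnecessary and again points the wrong way: the infinite factor $u$ of $w$ with $F(u)\subsetneq F(w)$ that the proposition hands you is \emph{already} strictly $\JJ$-above $w$ (since $w$ cannot be a factor of $u$), so $w$ is not $\JJ$-maximal --- no mixing with $x$ is needed. The paper's proof of $(\Rightarrow)$ goes through a genuinely different mechanism: a dedicated lemma (Lemma~\ref{lemmaR}) showing that if $u$ is uniformly recurrent and $uv$ is also uniformly recurrent, then $u\RR uv$; applying it and its dual to a factorization $w=puq$ yields $u\LL pu\RR puq=w$, hence $u\JJ w$. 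You would need an argument of comparable strength in place of the absorption heuristic.
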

The proof uses three lemmas. An element $s$ of a semigroup $S$
is \emph{regular}\index{regular!element} if there is some $x\in S$ such that $sxs=s$.
In a compact semigroup,  a $\JJ$-class contains a regular
element if and only if all its elements are regular,
if and only if it contains an idempotent.

For a pseudoword $w$, we denote $X(w)$ the  set of all
infinite pseudowords which are limits of sequences
of finite factors of $w$.
\begin{lemma}\label{lemma2.3}
Let $w$ be uniformly recurrent pseudoword over a finite
alphabet $A$.
\begin{enumerate}
\item Every element of $X(w)$ is a factor of $w$.
\item All elements of $X(w)$ lie in the same $\JJ$-class of $\widehat{A^*}$.
\item Every element of $X(w)$ is regular.
\end{enumerate} 
\end{lemma}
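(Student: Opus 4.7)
The plan is to handle (1), (2), (3) in sequence, exploiting at each step the compactness of $\widehat{A^*}$, the continuity of multiplication, and the uniform recurrence of $F(w)$.

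For (1), write $u = \lim u_n$ with $u_n \in F(w)$ a sequence of finite factors of $w$, so that $w = x_n u_n y_n$ for some $x_n, y_n \in \widehat{A^*}$. Compactness of $\widehat{A^*}$ lets me extract a subsequence along which $x_n \to x$ and $y_n \to y$; continuity of the product then gives $w = xuy$, so $u$ is a factor of $w$.

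For (2), take $u, v \in X(w)$ with $u = \lim u_n$ and $v = \lim v_n$. Uniform recurrence of $F(w)$ supplies, for each $n$, an integer $M_n$ such that $u_n$ is a factor of every word in $F(w)$ of length at least $M_n$. Since $|v_m|\to\infty$, for $m$ large we may write $v_m = \alpha_{n,m}\, u_n\, \beta_{n,m}$. Two successive subsequential extractions, first in $m$ and then in $n$, yield $v = \alpha u \beta$, so $u$ is a factor of $v$. Symmetry gives $u \JJ v$.

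For (3), the strategy is to locate an idempotent in the $\JJ$-class of $u$; by the remark preceding the lemma, this forces every element of the class to be regular. Since uniform recurrence of $F(w)$ implies its recurrence, there exist $y_n \in A^*$ with $u_n y_n u_n \in F(w)$; passing to a subsequence along which $y_n \to y$ gives $uyu \in X(w)$, whence $uyu \JJ u$ by (2). Because $uyu = u \cdot yu$, one has $uyu \leq_{\RR} u$, and stability of the compact semigroup $\widehat{A^*}$ upgrades this to $uyu \RR u$. Therefore $u = uyu \cdot z$ for some $z \in \widehat{A^*}$, i.e.\ $u \cdot (yuz) = u$. Iterating and taking limits, $u \cdot e = u$ where $e = (yuz)^\omega$ is an idempotent. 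Now $e$ is a suffix of $u$, so $e \leq_{\JJ} u$; conversely, every $(yuz)^{n!}$ lies in the closed set $\widehat{A^*}\, u\, \widehat{A^*}$, hence so does $e$, giving $u \leq_{\JJ} e$. Thus $u \JJ e$ with $e$ idempotent, and $u$ is regular.

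The main obstacle is (3). The decisive move is upgrading the $\JJ$-relation $u \JJ uyu$, obtained cheaply from (2) together with recurrence, into the $\RR$-relation needed to produce the cancellation equation $u = uyu\cdot z$. Once stability of $\widehat{A^*}$ is invoked to achieve this upgrade, the construction of the idempotent $e = (yuz)^\omega$ and the verification that $e \JJ u$ are then essentially formal.
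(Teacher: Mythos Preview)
Your proof is correct and follows essentially the same strategy as the paper's. The only differences are in execution: for (2) you argue directly by a double subsequence extraction, whereas the paper invokes Proposition~\ref{propositionTrivial} to conclude that $u$ and $v$ share the same finite factors and then reapplies (1); for (3) the paper uses stability on both sides to obtain $u\HH uvu$ and takes $(vu)^\omega$ as the idempotent in $J(u)$, which is marginally quicker than your route through $\RR$-equivalence alone and the idempotent $(yuz)^\omega$.
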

\begin{proof}
Assertion 1 results from the fact that $F(w)$ is closed.

Suppose that $u,v\in X(w)$. By Proposition~\ref{propositionTrivial}, they have the same set of finite factors. Thus, by Assertion 1, they are $\JJ$-equivalent.

Assume that $u$ is the limit of a sequence $(u_n)_{n\ge 0}$ of finite factors of $w$. Since $w$ is recurrent, there are finite words $v_n$ such that $u_nv_nu_n$
is a factor of $w$. If $v$ is an accumulation point of the sequence $v_n$,
then $uvu$ is a factor of $w$ which belongs to $X(w)$. By Assertion 2,
it is $\JJ$-equivalent to $u$. In a compact monoid, by the stability
condition, this implies that $u$ and $uvu$ are $\HH$-equivalent
and thus that $u$ is regular (indeed, $u\HH uvu$ implies $u\HH u(vu)^\omega$
and thus $(vu)^\omega$ is an idempotent in $J(u)$).
\end{proof}

\begin{lemma}
Let $w$ be a uniformly recurrent pseudoword.
Each $\HH$-class contained in the $\JJ$-class of $w$
contains some element of $X(w)$.
\end{lemma}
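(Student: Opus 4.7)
The plan is to combine a ``ternary product'' closure property of $X(w)$, derived from the recurrence of $F(w)$, with the eggbox structure of the regular $\JJ$-class $J(w)$.

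First I would establish the following key property of $X(w)$: for any $u_1,u_2\in X(w)$ there exists $z\in\widehat{A^*}$ with $u_1zu_2\in X(w)\cap R_{u_1}\cap L_{u_2}$. Existence of such a $z$ uses recurrence of $F(w)$: writing $u_i=\lim_n u_{i,n}$ with $u_{i,n}\in F(w)$, one finds linking words $z_n\in A^*$ with $u_{1,n}z_n u_{2,n}\in F(w)$, and then extracts a convergent subsequence $z_n\to z$ by compactness of $\widehat{A^*}$; the limit $u_1zu_2$ lies in $\overline{F(w)}\setminus F(w)=X(w)$ since the lengths $|u_{1,n}z_nu_{2,n}|$ tend to infinity. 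Membership in $R_{u_1}\cap L_{u_2}$ then follows from stability of Green's relations in the compact monoid $\widehat{A^*}$: from $u_1zu_2\le_{\RR} u_1$ together with $u_1zu_2\JJ u_1$ (guaranteed by Lemma~\ref{lemma2.3}(2), since both elements lie in $X(w)$) one concludes $u_1zu_2\,\RR\,u_1$, and dually on the $\LL$-side.

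Granting this ternary-product property, the proof reduces to showing that $X(w)$ meets every $\RR$-class and every $\LL$-class of $J(w)$: for any $\HH$-class $H=R\cap L$, picking $s\in X(w)\cap R$ and $t\in X(w)\cap L$ and applying the ternary-product property yields some $szt\in X(w)\cap R\cap L=X(w)\cap H$, as desired. To cover a prescribed $\RR$-class $R$ of $J(w)$, I would fix $h\in R$, use the factorisation $h=\alpha w\beta$ coming from $h\JJ w$, approximate $\alpha$ by finite words, and build a sequence of finite factors of $w$ by inserting recurrence-provided linking segments; by compactness and another application of stability, an accumulation point of this sequence lies in $X(w)$ and in the same $\RR$-class as $h$. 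The $\LL$-case is symmetric.

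The main obstacle will be the last coverage step: arguing that the recurrence-based construction can be pinned down so that its accumulation point lies in the prescribed $\RR$-class (resp.\ $\LL$-class). This requires a careful diagonalisation over the approximations of $\alpha,\beta$ and the linking words produced by recurrence, together with stability in $\widehat{A^*}$ to force the $\RR$- (or $\LL$-)class of the limit. Once this coverage is established, the ternary-product property closes the argument immediately.
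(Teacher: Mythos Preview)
Your overall strategy is correct and would work, but you are making it harder than it needs to be, and your sketch of the coverage step contains a detour that is at best unclear.

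The paper's proof does all three of your steps in a single stroke. Given any $u\in J(w)$, it takes the prefix $x_n$ and the suffix $y_n$ of $u$ of length $n$. These are finite factors of $w$ (since $u$, being $\JJ$-equivalent to $w$, is a factor of $w$). By recurrence of $F(w)$ there is a finite factor $t_n$ of $w$ of length at least $2n$ having $x_n$ as prefix and $y_n$ as suffix. Passing to a subsequence, $x_n\to x$, $y_n\to y$, $t_n\to t$, all in $X(w)\subset J(w)$. Stability then gives $u\RR x\RR t$ and $u\LL y\LL t$, hence $t\in X(w)\cap H(u)$. This is precisely your ternary-product construction, but applied directly to $u$ itself rather than to two auxiliary elements of $X(w)$; the separate coverage step becomes unnecessary.

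Where your plan wobbles is the coverage step as you describe it. Starting from $h=\alpha w\beta$ and approximating $\alpha$ by finite words is the wrong handle: the approximants $\alpha_n$ need not be factors of $w$, and there is no obvious way to manufacture from $\alpha_n$ a sequence of finite factors of $w$ whose limit is pinned to $R_h$. The ``careful diagonalisation'' you anticipate would in fact have to be replaced by a different observation altogether: since $h$ is a factor of $w$, the finite \emph{prefixes} of $h$ already lie in $F(w)$, and any accumulation point of that sequence lies in $X(w)$ and is $\RR$-equivalent to $h$ by stability. Once you see this, the factorisation $h=\alpha w\beta$ plays no role. So your three-step decomposition survives, but the right input for step~2 is the prefixes and suffixes of $h$ --- exactly what the paper uses to collapse the whole argument into one paragraph.
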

\begin{proof}
Let $u\in J(w)$. Denote by $x_n$ and $y_n$ the prefix and the suffix of $u$
of length $n$. Since $w$ is uniformly recurrent by Proposition~\ref{propositionTrivial}, there is a factor $t_n$
of $w$ of length at least $2n$ having $x_n$ as a prefix and $y_n$ as a suffix.
Taking a subsequence, we may assume that the sequences $(x_n)$, $(y_n)$
and $(t_n)$ converge to $x,y,t$. 
Then $x,y,t\in J(w)$ by Lemma~\ref{lemma2.3}(2). Since $x\ge_\RR t$
and $t\le_\LL y$, by stability, we obtain $u\HH t$.\end{proof}

\begin{lemma}\label{lemmaR}
Let $u$ be a uniformly recurrent pseudoword and suppose $v$ is a pseudoword
 such that $uv$ is still uniformly recurrent. Then $u$ and $uv$ are $\RR$-equivalent.
\end{lemma}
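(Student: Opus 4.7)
The plan is to derive the lemma directly from the algebraic characterization of uniform recurrence (Theorem~\ref{theoremUniformRecurrence}) together with the stability condition in the compact monoid $\widehat{A^*}$.

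First, I would observe that $u$ is a factor of $uv$ in the trivial way (via the decomposition $uv = 1\cdot u\cdot v$), so $u\leq_\JJ uv$. Since $u$ is uniformly recurrent, Theorem~\ref{theoremUniformRecurrence} tells us that $u$ is $\JJ$-maximal in $\widehat{A^*}$, so the comparison $u\leq_\JJ uv$ forces $u\,\JJ\, uv$.

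Second, $u$ is a prefix of $uv$, i.e.\ $uv\in u\widehat{A^*}$, and hence $uv\leq_\RR u$. Now $\widehat{A^*}$, being profinite, is compact, so the stability condition recalled just before Theorem~\ref{theoremUniformRecurrence} applies: the combination $uv\leq_\RR u$ and $uv\,\JJ\,u$ implies $uv\,\RR\,u$, which is exactly the claim.

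I do not foresee any real obstacle; the whole argument is a one-line application of stability once uniform recurrence has been converted into $\JJ$-maximality. The only point worth flagging is that this route uses uniform recurrence of $u$ but not of $uv$, so the second hypothesis of the statement is not actually needed for this line of reasoning. If one wished to use it explicitly, one could alternatively invoke Proposition~\ref{propositionTrivial} to see that $u$ and $uv$, both being infinite uniformly recurrent with $u$ an infinite factor of $uv$, have the same set of finite factors, and argue $\JJ$-equivalence through $X(uv)$ via Lemma~\ref{lemma2.3}(2); but the $\JJ$-maximality route is cleaner.
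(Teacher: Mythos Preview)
Your argument is circular. You invoke Theorem~\ref{theoremUniformRecurrence} to convert uniform recurrence of $u$ into $\JJ$-maximality, but in the paper the implication ``uniformly recurrent $\Rightarrow$ $\JJ$-maximal'' is precisely the direction of Theorem~\ref{theoremUniformRecurrence} whose proof \emph{uses} Lemma~\ref{lemmaR} (and its dual). So Lemma~\ref{lemmaR} must be established before Theorem~\ref{theoremUniformRecurrence} is available, and your route collapses.

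The paper's proof avoids this by working directly from Proposition~\ref{propositionTrivial} and compactness. It first treats the case where $v$ is finite: since $u$ is an infinite factor of the uniformly recurrent pseudoword $uv$, they share the same finite factors, so each suffix $u_n$ of $u$ reappears inside a longer suffix $u_{m(n)}$ in the form $x_nu_nvy_n$; passing to accumulation points and using stability gives $u\,\RR\,uv$. The infinite case is then reduced to the finite one by a contradiction argument that isolates a single letter $a$ at which the $\RR$-class would first drop. Note also that this proof genuinely uses the hypothesis that $uv$ is uniformly recurrent (to apply Proposition~\ref{propositionTrivial}); your remark that the second hypothesis is superfluous is an artifact of having assumed the theorem you are meant to help prove.

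Your alternative suggestion via Lemma~\ref{lemma2.3}(2) is not circular, but it is not complete either: you would need to show that both $u$ and $uv$ lie in $X(uv)$, i.e.\ that each is a limit of a sequence of finite factors of $uv$, and this requires essentially the same compactness manoeuvre as the paper's direct argument.
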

\begin{proof}
Suppose first that $v$ is finite.
Let $u_n$ be the suffix of $u$ of length $n$. Since $u$ is
an infinite factor of $uv$ which is uniformly recurrent, they
have the same finite factors by Lemma
\ref{propositionTrivial}. Hence for every $n$ there is some
$m(n)$ such that $u_{m(n)}=x_nu_nvy_n$. By compactness, we may assume
by taking subsequences that the sequences $x_n,y_n,u_n$ converge
to $x,y,u'$ respectively. Then by continuity of the multiplication, 
the sequence $u_{m(n)}$ converges to $xu'vy$. Since the limits
of two convergent sequences of suffixes of the same pseudoword
are $\LL$-equivalent, we obtain that $xu'vy\LL u$ and thus
$u\RR u'v$ by stability. Since $u'$ is the limit of a sequence
of suffixes of $u$, there is some factorization of the
form $u=zu'$. Since the $\RR$-equivalence is a left congruence,
we finally obtain $uv=zu'v\RR zu'=u$.

Assume next that $v$ is infinite. We assume by contradiction that
$u>_\RR uv$. Let $v_n$ be a sequence of finite
words converging to $v$. Taking a subsequence, we may assume that
$uv_n>_\RR u$ for all $n$. Thus for each $n$, we have a factorization
$v_n=x_na_ny_n$ with $a_n\in A$ such that $u\RR ux_n>_\RR ux_na_n\ge_\RR uv$.
Since the alphabet is finite and $\widehat{A^*}$ is compact
we may, up to taking a subsequence, assume that the letter
sequence $a_n$ is constant and the sequences $x_n,y_n$ converge
to $x$ and $y$ respectively.
Thus we have $p=xay$ with $a\in A$ and $u\RR ux>_\RR uxa\ge_ \RR uv$.
On the other hand, since $ux$ and $uxa$ are infinite factors
of $uv$, they are both uniformly recurrent by Proposition~\ref{propositionTrivial}. By the first part, we have $ux\RR uxa$, a contradiction.
\end{proof}
A dual result holds for the $\LL$-order.

\begin{proofof}{of Theorem~\ref{theoremUniformRecurrence}}
Suppose first that $w$ is $\JJ$-maximal as an infinite pseudoword.
If $v$ is an infinite factor of $w$, it is $\JJ$-equivalent to $w$.
Hence $v,w$ have the same factors and, in particular, the same finite
factors. By Proposition~\ref{propositionTrivial}, $w$ is uniformly recurrent.

Suppose conversely that $u,w\in\widehat{A^*}\setminus A^*$ 
are such that  $u\ge_\JJ w$ with $w$  uniformly recurrent.
 Set $w=puq$ with $p,q\in\widehat{A^*}$.  
By the dual of Lemma~\ref{lemmaR}, we have $pu\LL u$. And
by Lemma \ref{lemmaR}, we have $pu\RR puq$. Thus $u$ and $w$
are $\JJ$-equivalent.
\end{proofof}
\begin{example}
The $\JJ$-class of $a^\omega$ in $\widehat{A^*}$ is made of one $\HH$-class.
The $\JJ$-class of $(ab)^\omega$ has four $\HH$-classes. It is represented in Figure~\ref{FigureJclassab}.
\begin{figure}[hbt]
\centering
\begin{picture}(30,30)(0,-8)
$
\def\rb{\hspace{2pt}\raisebox{0.8ex}{*}}\def\vh{\vphantom{\biggl(}}
    \begin{array}%
    {r|@{}l@{}c|@{}l@{}c|}%
    \cline{2-5}
    & \vh &(ab)^\omega &\vh  &(ab)^\omega a \\
    \cline{2-5}
    &\vh&b(ab)^\omega &\vh&(ba)^\omega\\
    \cline{2-5}
    \end{array}
$
\end{picture}
\caption{The $\JJ$-class of $(ab)^\omega$.}\label{FigureJclassab}
\end{figure}
\end{example}

\subsection{The $\JJ$-class $J(F)$}
Let $F$ be a uniformly recurrent set of finite words on the alphabet $A$. 
The closure $\bar{F}$ of $F$ in $\widehat{A^*}$ is also factorial
(see~\cite[Proposition 2.4]{AlmeidaCosta2009}). 
The proof relies on the following useful lemma from~\cite[Lemma 2.5]{AlmeidaCosta2009}.
\begin{lemma}\label{lemmaUseful}
For every $u,v\in\widehat{A^*}$ and every sequence $(w_n)$ converging to $uv$,
there are sequences $(u_n),(v_n)$ such that $\lim u_n=u$, $\lim v_n=v$
and $(u_nv_n)$ is a subsequence of $(w_n)$.
\end{lemma}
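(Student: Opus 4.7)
The plan is to establish the lemma in two steps: first prove that the multiplication map $\mu\colon\widehat{A^*}\times\widehat{A^*}\to\widehat{A^*}$, $(a,b)\mapsto ab$, is an \emph{open} map, and then deduce the lemma by a straightforward diagonal extraction.

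To show that $\mu$ is open: by Theorem~\ref{theoremProfiniteSemigroups}, $\widehat{A^*}$ is zero-dimensional, so the clopen boxes $K_1\times K_2$ (with each $K_i$ clopen in $\widehat{A^*}$) form a basis for the product topology on $\widehat{A^*}\times\widehat{A^*}$, and it suffices to verify that $\mu(K_1\times K_2)$ is clopen for each such box. By Proposition~\ref{propositionRecognizableClopen}, each clopen $K_i$ equals the closure of the recognizable set $X_i=K_i\cap A^*$, and one shows $\mu(K_1\times K_2)=\overline{X_1X_2}$: the left-hand side is closed (being the continuous image of the compact set $K_1\times K_2$) and contains $X_1X_2$, hence contains $\overline{X_1X_2}$; conversely, any $(a,b)\in K_1\times K_2$ is the limit of some $(a_n,b_n)\in X_1\times X_2$ by density of $A^*$ in $\widehat{A^*}$ and openness of each $K_i$, so $ab=\lim a_nb_n\in\overline{X_1X_2}$ by continuity of $\mu$. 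Since the concatenation of two recognizable subsets of $A^*$ is recognizable (closure of rational sets under product), Proposition~\ref{propositionRecognizableClopen} again yields that $\overline{X_1X_2}$ is clopen.

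Given the openness of $\mu$, for each integer $k\ge 1$ the set $\mu(B_{1/k}(u)\times B_{1/k}(v))$ is an open neighborhood of $uv$ in $\widehat{A^*}$, and therefore contains $w_n$ for all sufficiently large $n$. A standard diagonal extraction then produces a strictly increasing sequence $n_1<n_2<\cdots$ together with factorizations $w_{n_k}=u_kv_k$ satisfying $d(u_k,u)<1/k$ and $d(v_k,v)<1/k$; this gives $u_k\to u$, $v_k\to v$, and $(u_kv_k)$ as a subsequence of $(w_n)$, as required.

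The main technical point is the identity $\mu(K_1\times K_2)=\overline{X_1X_2}$ for clopens $K_1,K_2$: it bridges the purely topological structure of $\widehat{A^*}$ (clopen boxes in the product) with the combinatorial notion of recognizability in $A^*$, and once it is established the rest of the argument is formal.
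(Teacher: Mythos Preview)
Your argument is correct. The paper does not actually supply a proof of this lemma; it merely quotes it from \cite[Lemma 2.5]{AlmeidaCosta2009}, so there is no in-text proof to compare against.

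That said, your route---proving that multiplication $\mu$ on $\widehat{A^*}$ is an open map and then extracting diagonally---is clean and conceptually transparent. The identity $\mu(\overline{X_1}\times\overline{X_2})=\overline{X_1X_2}$ for recognizable $X_1,X_2\subset A^*$ is exactly the right bridge: one containment uses compactness of $\overline{X_1}\times\overline{X_2}$ and continuity of $\mu$, the other uses density of $A^*$ together with the fact that the $\overline{X_i}$ are \emph{open} (so approximating sequences can be taken in $X_i$). The combinatorial input---that the concatenation of two recognizable subsets of $A^*$ is recognizable---is standard and is what makes $\overline{X_1X_2}$ clopen via Proposition~\ref{propositionRecognizableClopen}. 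Once $\mu$ is open, the diagonal extraction from the nested open neighborhoods $\mu(B_{1/k}(u)\times B_{1/k}(v))$ of $uv$ is routine.

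A minor presentational remark: to conclude that $\mu$ is open it suffices that images of basic opens are \emph{open}; you prove the stronger fact that images of clopen boxes are \emph{clopen}, which is fine but more than needed. Also, strictly speaking you produce sequences indexed by $k$ rather than by $n$, but this is only a relabeling and matches the statement exactly since $(u_kv_k)=(w_{n_k})$ is a subsequence of $(w_n)$.
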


The set of two-sided infinite words with all their factors in $F$
is denoted $X(F)$. It is closed for the product topology of $A^\Z$.
It is also invariant by the shift $\sigma:A^\Z\rightarrow A^\Z$
defined by $y=\sigma(x)$ if $y_n=x_{n+1}$ for any $n\in \Z$.
Such a closed and shift invariant set is called a \emph{subshift}\index{subshift}.
It is classical that a subshift is of the form $X(F)$ for some
uniformly recurrent set if and only if it is minimal (see
\cite{LindMarcus1995} for example).

By the results of Section~\ref{factorialSets}, all
the infinite pseudowords in the closure $\bar{F}$ of $F$ 
are $\JJ$-equivalent. We denote by $J(F)$ their $\JJ$-class.
\begin{example}\label{exampleFibonacciSet}
The Fibonacci morphism $\varphi$ is primitive. The set $F$ of factors 
of the words $\varphi^n(a)$ for $n\ge 1$ is
called the \emph{Fibonacci set}\index{Fibonacci!set}. It contains the infinitely recurrent
pseudowords $\varphi^\omega(a)$ and $\varphi^\omega(b)$.
\end{example}

For a uniformly recurrent set $F$, the $\JJ$-class $J(F)$ can be described
as follows. For $x\in \widehat{A^*}$, denote by $\stackrel{\rightarrow}{x}$
the right infinite word whose finite prefixes are those of $x$.
Symmetrically, $\stackrel{\leftarrow}{x}$ is the left
inifnite word whose finite suffixes are those of $x$.
The following is~\cite[Lemma 6.6]{AlmeidaCosta2009}.
\begin{proposition}
For a uniformly recurrent set $F$, two words $u,v\in J(F)$ are
$\RR$-equivalent if and only if $\stackrel{\rightarrow}{u}=\stackrel{\rightarrow}{v}$ and $\LL$-equivalent if and only if $\stackrel{\leftarrow}{u}=\stackrel{\leftarrow}{v}$
\end{proposition}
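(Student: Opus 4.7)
The forward direction is immediate from the fact that the clopen sets of the form $w\widehat{A^*}$, $w\in A^*$, are preserved under $\RR$-equivalence of infinite pseudowords. Indeed, if $u\RR v$ then $v=ux$ and $u=vy$ for some $x,y\in\widehat{A^*}$; if $w\in A^*$ is a prefix of $u$, say $u=wu^*$, then $v=wu^*x\in w\widehat{A^*}$, so $w$ is also a prefix of $v$, and symmetrically. Thus $u$ and $v$ share all finite prefixes, giving $\stackrel{\rightarrow}{u}=\stackrel{\rightarrow}{v}$, and the $\LL$-case follows by reversing words.

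For the converse direction, assume $\stackrel{\rightarrow}{u}=\stackrel{\rightarrow}{v}$. Since $u\JJ v$ in the compact monoid $\widehat{A^*}$, the stability condition will reduce the goal $u\RR v$ to the single inclusion $v\in u\widehat{A^*}$. Using Proposition~\ref{propositionClopen} together with the fact that $u\widehat{A^*}$ is closed (being the continuous image of the compact monoid $\widehat{A^*}$), this inclusion is equivalent to $\hat\varphi(v)\in\hat\varphi(u)M$ for every continuous morphism $\hat\varphi\colon\widehat{A^*}\to M$ into a finite monoid $M$. For each $k\ge 0$, let $w_k\in A^*$ be the common length-$k$ prefix of $u$ and $v$ and factor $u=w_k\tilde u_k$, $v=w_k\tilde v_k$ in $\widehat{A^*}$. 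Each tail $\tilde u_k$ is an infinite factor of the uniformly recurrent $u$, so by Proposition~\ref{propositionTrivial} it is itself uniformly recurrent and lies in $J(F)$; likewise for $\tilde v_k$. Applying the dual of Lemma~\ref{lemmaR} to the pair $(\tilde u_k,\,w_k\tilde u_k=u)$, both uniformly recurrent, yields $\tilde u_k\LL u$ in $\widehat{A^*}$, and similarly $\tilde v_k\LL v$.

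Composing with $\hat\varphi$ gives $\LL$-relations $\hat\varphi(\tilde u_k)\LL\hat\varphi(u)$ and $\hat\varphi(\tilde v_k)\LL\hat\varphi(v)$ in $M$. Finiteness of $M^3$ allows passage to a subsequence of $k$'s on which $(\varphi(w_k),\hat\varphi(\tilde u_k),\hat\varphi(\tilde v_k))$ is a constant triple $(\alpha,\beta,\gamma)$, so that $\hat\varphi(u)=\alpha\beta$ and $\hat\varphi(v)=\alpha\gamma$, with $\beta\LL\alpha\beta$, $\gamma\LL\alpha\gamma$, and $\alpha\beta\JJ\alpha\gamma$ (inherited from $u\JJ v$). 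The main obstacle is the final algebraic step: extracting $\alpha\gamma\in\alpha\beta\cdot M$ from this configuration in $M$. The natural route is to combine the $\LL$-equivalences with the idempotents $(\alpha\beta)^\omega$ and $(\alpha\gamma)^\omega$ to force $\alpha\beta$ and $\alpha\gamma$ into a common $\RR$-class of $M$, exploiting stability ($\DD=\JJ$) in the finite monoid; the symmetric argument with suffixes and left infinite words then handles the $\LL$-statement.
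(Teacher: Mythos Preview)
The paper does not actually prove this proposition; it only cites it from~\cite{AlmeidaCosta2009}. So the comparison is with the natural argument, not with anything in the text.

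Your forward direction is fine, and the setup for the converse is good: factoring $u=w_k\tilde u_k$, $v=w_k\tilde v_k$ and invoking the dual of Lemma~\ref{lemmaR} to get $\tilde u_k\LL u$, $\tilde v_k\LL v$ is exactly the right move. The gap is in the last paragraph. After projecting to a finite monoid $M$ and extracting a constant triple $(\alpha,\beta,\gamma)$, you claim that $\beta\LL\alpha\beta$, $\gamma\LL\alpha\gamma$ and $\alpha\beta\JJ\alpha\gamma$ should force $\alpha\beta\RR\alpha\gamma$, and you gesture at idempotents and stability. But those three conditions alone do \emph{not} imply $\alpha\beta\RR\alpha\gamma$: in the full transformation monoid on $\{1,2,3,4\}$, take $\alpha=(1,2,3,1)$, $\beta=(1,2,1,1)$, $\gamma=(1,2,2,1)$ (in one-line notation); then $\alpha\beta=(1,2,1,1)$ and $\alpha\gamma=(1,2,2,1)$ both have rank~$2$ and image~$\{1,2\}$, so all three hypotheses hold, yet their kernels differ and they are not $\RR$-equivalent. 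So the ``natural route'' you sketch cannot close the argument as stated.

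What is missing is control over $\alpha$ itself. The clean fix avoids the finite-monoid detour entirely: by compactness, pass to a subsequence along which $w_k\to w$ in $\widehat{A^*}$. Since $|w_k|\to\infty$, the limit $w$ is an infinite pseudoword; since each $w_k$ is a finite prefix of $u$ and hence lies in $F$, we have $w\in\bar F\setminus A^*=J(F)$. Now $u=ws$ and $v=wt$ for the corresponding limits $s,t$ of $\tilde u_k,\tilde v_k$, and all of $w$, $ws$, $wt$ are uniformly recurrent. Applying Lemma~\ref{lemmaR} directly (not its dual) gives $w\RR ws=u$ and $w\RR wt=v$, hence $u\RR v$. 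If you insist on finishing inside $M$, the same limit $w$ gives $\hat\varphi(w)=\alpha$ with $w\JJ u$, whence $\alpha\JJ\alpha\beta$; stability then yields $\alpha\RR\alpha\beta$ and $\alpha\RR\alpha\gamma$, and you are done. Either way, the key missing idea is that the common prefixes accumulate to a pseudoword in $J(F)$.
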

It follows from this that $w\in J(F)$ belongs to a subgroup if and only
if the two-sided infinite word $\stackrel{\leftarrow}{w}\cdot\stackrel{\rightarrow}{w}$ has all its factors in $F$. Indeed, the finite factors of $w^2$
are those of $w$ plus the the products $uv$ where $u$ is a finite
suffix of $w$ and $v$ is a finite prefix of $w$ \cite[Lemma 8.2]{AlmeidaCosta2009}.

Thus the maximal subgroups of $J(F)$ are in bijection with the elements
of the set $X(F)$ of two-sided infinite words with all their factors in $F$.
For $x\in X(F)$, we denote by $H_x$ the maximal subgroup corresponding
to $x$.
\begin{example}
Let $F$ be the Fibonacci set and let $w=\varphi^\omega(a)$. The right infinite
word $\stackrel{\rightarrow}{w}$ is the Fibonacci word. The left infinite
word $\stackrel{\leftarrow}{w}$ is the word with suffixes 
$\varphi^{2n}(a)$ (see Section~\ref{sectionFibonacci}). The two
sided infinite word $\stackrel{\leftarrow}{w}\cdot\stackrel{\rightarrow}{w}$
is a fixed point of $\varphi^2$.
\end{example}

\begin{example}\label{exampleMorphismAC}
let $A=\{a,b\}$ and let $\varphi:A^*\rightarrow A^*$ be defined by
$\varphi(a)=ab$ and $\varphi(b)=a^3b$. Let $F$ be the set
of factors of $\varphi^\omega(a)$. Since $\varphi$ is primitive,
$F$ is uniformly recurrent. 
The pseudowords $\varphi^\omega(a)$ and $\varphi^\omega(b)$
belong to the same $\HH$-class of $J(F)$. Indeed, we have
$\stackrel{\rightarrow}{\varphi^\omega(a)}=\stackrel{\rightarrow}{\varphi^\omega(b)}$
and $\stackrel{\leftarrow}{\varphi^\omega(a)}=\stackrel{\leftarrow}{\varphi^\omega(b)}$.
\end{example}
\subsection{Fixed points of substitutions}\label{sectionSustitutions}
Let $A$ be a finite alphabet.
Let $\varphi:A^*\rightarrow A^*$ be a morphism, also called a \emph{substitution}\index{substitution} over $A$. Then $\varphi$ extends uniquely by continuity
to a morphism still denoted $\varphi:\widehat{A^*}\rightarrow \widehat{A^*}$.
The monoid $\End(\widehat{A^*})$ is profinite by Theorem~\ref{theoremEndomorphismMonoid}. Thus the morphism $\varphi^\omega$ is  well defined as the unique idempotent in the closure of the semigroup generated by $\varphi$.


A substitution $\varphi:A^*\rightarrow A^*$ is said to be \emph{primitive}\index{primitive}\index{substitution!primitive}\index{morphism!primitive} if there is an integer $n$ such
that all letters appear in every $\varphi^n(a)$ for $a\in A$. 

A fixed point of a substitution $\varphi$ is an infinite word
$x\in A^\N$ such that $\varphi(x)=x$.
As well known, a fixed point of a primitive substitution is uniformly
recurrent (see~\cite[Proposition 1.3.2]{PytheasFogg2002} for example).

The following is a particular case of~\cite[Theorem 3.7]{Almeida2005b}
(in which the notion weakly primitive substitution is introduced).

\begin{theorem}
Let $\varphi$ be a primitive
substitution over a finite alphabet $A$. Then the 
 pseudowords $\varphi^\omega(a)$ with $a\in A$
are uniformly recurrent and are all $\JJ$-equivalent.
\end{theorem}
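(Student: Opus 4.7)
The plan is to reduce the theorem to a computation of finite factors and then appeal to Theorem~\ref{theoremUniformRecurrence} and Lemma~\ref{lemma2.3}. Set
$F_\varphi = \bigcup_{b \in A,\, n \geq 1} \operatorname{Fact}(\varphi^n(b))$,
the language of the substitutive subshift. By the fact cited just above the theorem (a fixed point of a primitive substitution is uniformly recurrent, passing if necessary to a power $\varphi^k$ admitting a fixed point), the set $F_\varphi$ is uniformly recurrent.

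First I would establish that $F(\varphi^\omega(a)) = F_\varphi$ for every $a\in A$. Since $\varphi^\omega(a)=\lim_n \varphi^{n!}(a)$ in $\widehat{A^*}$ and each set $\widehat{A^*}u\widehat{A^*}$ is clopen (as noted after Proposition~\ref{propositionRecognizableClopen}), a finite word $u$ is a factor of $\varphi^\omega(a)$ if and only if it is a factor of $\varphi^{n!}(a)$ for all sufficiently large $n$; this gives $F(\varphi^\omega(a))\subseteq F_\varphi$ at once. For the reverse inclusion, pick $u\in F_\varphi$, let $N$ be the threshold provided by uniform recurrence of $F_\varphi$, and use primitivity to ensure $|\varphi^{n!}(a)|\ge N$ for $n$ large; then $\varphi^{n!}(a)\in F_\varphi$ forces $u$ to be one of its factors, hence a factor of $\varphi^\omega(a)$.

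The rest is a quick harvest. Uniform recurrence of $\varphi^\omega(a)$ is immediate: given $u\in F(\varphi^\omega(a))=F_\varphi$, the same threshold $N$ forces $u$ to appear in every sufficiently long finite factor of $\varphi^\omega(a)$. For the $\JJ$-equivalence, note that for every $b\in A$ we have $\varphi^\omega(b)=\lim_n \varphi^{n!}(b)$ with each $\varphi^{n!}(b)\in F_\varphi = F(\varphi^\omega(a))$; hence $\varphi^\omega(b)\in X(\varphi^\omega(a))$ in the notation of Lemma~\ref{lemma2.3}, and Lemma~\ref{lemma2.3}(2) concludes that $\varphi^\omega(a)\mathrel{\JJ}\varphi^\omega(b)$.

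The main obstacle I anticipate is the inclusion $F_\varphi\subseteq F(\varphi^\omega(a))$ in the first step: it rests on the uniform recurrence of $F_\varphi$, which itself uses primitivity (via the existence of a uniformly recurrent fixed point of some power of $\varphi$), and on the growth $|\varphi^{n!}(a)|\to\infty$, which also relies on primitivity. Pinning down these two ingredients cleanly, in particular handling the case in which $\varphi$ itself has no fixed point in $A^\N$ but some power does, is where care is needed.
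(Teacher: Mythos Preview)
Your argument is correct. The organization differs from the paper's but the substance is the same: both rest on the uniform recurrence of the substitutive language coming from primitivity.

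The paper's proof is shorter and more self-contained. It does not introduce the auxiliary language $F_\varphi$, does not invoke the external fact about fixed points, and does not use Lemma~\ref{lemma2.3}. Instead it argues directly: given a finite factor $u$ of $\varphi^\omega(a)$, primitivity yields an $N$ such that $u$ occurs in every factor of length $N$ of every $\varphi^n(b)$; this single bound gives uniform recurrence of each $\varphi^\omega(b)$ and, at the same time, shows that all the $\varphi^\omega(b)$ share the same finite factors, whence their $\JJ$-equivalence (via Proposition~\ref{propositionTrivial}, or simply because each is then a factor of the other). Your route factors this same content through $F_\varphi$ and then reaches $\JJ$-equivalence via $X(\varphi^\omega(a))$ and Lemma~\ref{lemma2.3}; this is perfectly valid but uses more machinery than needed. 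The caveat you flag about passing to a power of $\varphi$ to obtain a fixed point in $A^\N$ is a genuine point of care in your approach, and it simply does not arise in the paper's direct argument.
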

\begin{proof}
Let $u$ be a finite factor of $\varphi^\omega(a)$ for some $a\in A$.
Then there is an integer $N$ such that
$u$ is a factor of any factor of length $N$ of $\varphi^n(b)$ for all $b\in A$.
Thus $u$ is a factor of any finite factor of  length $N$ of any
$\varphi^\omega(b)$. This proves both claims.
\end{proof}

\begin{example}
The Fibonacci morphism\index{Fibonacci!morphism} $\varphi:a\mapsto ab,b\mapsto a$ is primitive.
Thus the pseudowords $\varphi^\omega(a)$ and $\varphi^\omega(b)$ are
uniformly recurrent and $\JJ$-equivalent.
\end{example}
\begin{example}\label{exampleThueMorse}
The \emph{Thue-Morse substitution}\index{substitution!Thue-Morse} is the morphism $\tau:a\mapsto ab,b\mapsto ba$.
It is primitive. The unique fixed point $x=abbabaab\cdots$ of $\tau$ beginning with $a$ is
called the \emph{Thue-Morse infinite word}. The set of its factors is called
the \emph{Thue-Morse set}.
\end{example}
Given a substitution $\varphi$ over $A$, we denote by $\varphi_G$
the endomorphism of $\widehat{\F(A)}$ such that the following diagram
commutes 
\begin{figure}[hbt]
\centering
\gasset{Nframe=n,Nadjust=wh}
\begin{picture}(20,15)
\node(FGAl)(0,0){$\widehat{\F(A)}$}\node(FGAr)(20,0){$\widehat{\F(A)}$}
\node(A^*l)(0,15){$\widehat{A^*}$}\node(A^*r)(20,15){$\widehat{A^*}$}

\drawedge(FGAl,FGAr){$\varphi_G$}\drawedge(A^*l,A^*r){$\varphi$}
\drawedge(A^*l,FGAl){$\pi$}\drawedge(A^*r,FGAr){$\pi$}
\end{picture}
\end{figure}
where $\pi:\widehat{A^*}\rightarrow \widehat{\F(A)}$ denotes the canonical projection.

For a substitution $\varphi$ over $A$, the endomorphism $\varphi_G^\omega$
is the identity if and only if $\varphi$ is invertible (as a map from
$\F(A)$ into itself). 

Indeed, if $\varphi$ is an automorphism of
$\F(A)$, then its extension to $\widehat{\F(A)}$ is also an automorphism.
But then $\varphi_G^\omega$ is the identity since in a group
one has $x^\omega=1$ for any element $x$.

Conversely, if $\varphi_G^\omega$ is the identity, then $\varphi$ is
a bijection from $\F(A)$ onto itself and thus it is an
automorphism of $\F(A)$.
\begin{example}\label{exampleFibonacciPseudo}
Let $A= \{a,b\}$ and let $\varphi:a\mapsto ab,b\mapsto a$ be the Fibonacci morphism. Then $\varphi$ is an automorphism of $\F(A)$ since $\varphi^{-1}:a\mapsto b, b\mapsto b^{-1}a$. Accordingly $\varphi_ G^\omega$ is the identity.
In particular, one has $\varphi_ G^\omega(a)=a$.
This explains in a simple way that the length of $\varphi_G^\omega(a)$
is equal to $1$ (see Section~\ref{sectionFibonacci}).
\end{example}

\section{Sturmian sets and tree sets}\label{sectionSturmian}

Let $F$ be a factorial set on the alphabet $A$.
For $w\in F$, we denote
\begin{eqnarray*}
L_F(w) & = & \{ a\in A \mid aw\in F\}, \\
R_F(w) & = & \{ a\in A \mid wa\in F\}, \\
E_F(w) & = & \{ (a,b) \in A \times A \mid awb \in F\}
\end{eqnarray*}
and further
\begin{displaymath}
\ell_F(w)=\Card(L_F(w)),\quad r_F(w)=\Card(R_F(w)),\quad e_F(w)=\Card(E_F(w)).
\end{displaymath}
For $w\in F$, we denote 
$$
m_F(w) = e_F(w) - \ell_F(w) - r_F(w) + 1.
$$

A word $w$ is called \emph{neutral}\index{neutral!word} if $m_F(w) = 0$.
A factorial set $F$ is \emph{neutral}\index{neutral!set} if every word in $F$ is neutral.
\begin{example}
The Fibonacci set is neutral as any Sturmian set.
\end{example}
\begin{example}
The Thue-Morse set $T$ is not neutral. Indeed, since $A^2\subset T$, one has
$m_T(\varepsilon)=1$.
\end{example}
\begin{example}\label{exampleMorphismAC2}
Let $\varphi:a\mapsto ab,b\mapsto a^3b$ be as in Example~\ref{exampleMorphismAC}. Let $F$ be the set of factors of $\varphi^\omega(a)$.
It is not neutral since
$m(a)=1$ and $m(aa)=-1$. 
\end{example}
A neutral set has complexity $kn+1$ where $k=\Card(A)-1$ (see~\cite{BertheDeFeliceDolceLeroyPerrinReutenauerRindone2013a}).

\subsection{Sturmian sets}
We recall here some notions concerning episturmian words (see~\cite{BerstelDeFelicePerrinReutenauerRindone2012}
for more
details and references). 

 A word $w$ is
\emph{right-special}\index{right-special word}
(resp. \emph{left-special}\index{left-special word}) if $\ell_F(w)\ge 2$ (resp.
$r_F(w)\ge 2$). A right-special (resp.
left-special) word $w$ is \emph{strict}\index{strict!right-special
  word}\index{right-special word!strict}\index{strict!left-special word}
\index{left-special word!strict} if $\ell_F(w)=\Card(A)$
(resp. $r_F(w)=\Card(A)$). In the case of a $2$-letter alphabet,
all special words are strict.

By definition, an infinite word $x$ is
\emph{episturmian}\index{episturmian word} if $F(x)$ is
closed under reversal and if $F(x)$ contains, for each $n\ge 1$, at
most one word  of length $n$ which is right-special. 

Since $F(x)$ is closed under reversal, the reversal of a right-special
factor of length $n$ is left-special, and it is the only left-special
factor of length $n$ of $x$. A suffix of a right-special factor is
again right-special. Symmetrically, a prefix of a left-special factor
is again left-special.

As a particular case, a \emph{strict}%
\index{strict!episturmian word}\index{episturmian word!strict}
episturmian word is an episturmian word $x$ with the two following
properties: $x$ has exactly one right-special factor of each length
and moreover each right-special factor $u$ of $x$ is strict, that is
satisfies the inclusion $uA\subset F(x)$
(see~\cite{DroubayJustinPirillo2001}).

 For $a\in A$, denote by
$\psi_a$ the morphism  of $A^*$ into itself, called
\emph{elementary morphism}\index{elementary
  morphism}\index{morphism!elementary}, defined by
\begin{displaymath}
\psi_a(b)=\begin{cases}ab&\text{if $b\ne a$}\\
                       a&\text{otherwise}
         \end{cases}
\end{displaymath}
Let $\psi:A^*\rightarrow \End(A^*)$ be the morphism from $A^*$ into
the monoid of endomorphisms of $A^*$ which maps each $a\in A$ to
$\psi_a$. For $u\in A^*$, we denote by $\psi_u$ the image of $u$ by
the morphism $\psi$.  Thus, for three words $u,v,w$, we have
$\psi_{uv}(w)=\psi_u(\psi_v(w))$.
A \emph{palindrome}\index{palindrome word} is a word $w$ which is
equal to its reversal.  Given a word $w$, we denote by $w^{(+)}$ the
\emph{palindromic closure}\index{palindromic closure} of $w$. It is,
by definition, the shortest palindrome which has $w$ as a prefix.

The \emph{iterated palindromic closure}\index{iterated palindromic
  closure} of a word $w$ is the word $\Pal(w)$ defined recursively as
follows. One has $\Pal(1)=1$ and for $u\in A^*$ and $a\in A$, one has
$\Pal(ua)=(\Pal(u)a)^{(+)}$. Since $\Pal(u)$ is a proper prefix of
$\Pal(ua)$, it makes sense to define the iterated palindromic closure
of an infinite word $x$ as the infinite word which is the limit of the iterated palindromic closure
of the prefixes of $x$.

\emph{Justin's Formula}\index{Justin's Formula} is the following. For
every words $u$ and $v$, one has
\begin{displaymath}
  \Pal(uv)=\psi_u(\Pal(v))\Pal(u)\,.
\end{displaymath}
This formula extends to infinite words: if $u$ is a word and $v$ is an
infinite word, then
\begin{equation}\label{JustinInfini}
  \Pal(uv)=\psi_u(\Pal(v))\,.
\end{equation}
There is a precise combinatorial description of standard episturmian
words (see e.g.~\cite{JustinVuillon2000,GlenJustin2009}). 

\begin{theorem}
  An infinite word $s$ is a standard episturmian word if and only if
  there exists an infinite word $\Delta=a_0a_1\cdots$, where the $a_n$
  are letters, such that
  \begin{displaymath}
    s=\lim_{n\to\infty} u_n\,,
  \end{displaymath}
  where the sequence $(u_n)_{n\ge 0}$ is defined by $u_n=\Pal(a_0a_1\cdots
  a_{n-1})$.  Moreover, the word $s$ is episturmian strict if and only
  if every letter appears infinitely often in~$\Delta$.
\end{theorem}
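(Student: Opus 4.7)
The plan is to prove the two implications of the main equivalence separately, and then deduce the strict characterization from the construction. The sequence of palindromic prefixes of $s$ is the central object in both directions, and Justin's formula is the main algebraic tool.

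For the ``only if'' direction, I would start from a standard episturmian word $s$ and examine its palindromic prefixes. Because $F(s)$ is closed under reversal and because every left-special factor of a standard episturmian word is a prefix of $s$, one can show that the palindromic prefixes of $s$ form an infinite strictly increasing sequence $u_0=\varepsilon, u_1, u_2, \ldots$. The central combinatorial claim is then that for each $n$ there is a unique letter $a_n$ with $u_{n+1}=(u_n a_n)^{(+)}$: this letter is identified by looking at the first position after $u_n$ in $s$ at which a new palindromic prefix is completed. Setting $\Delta = a_0 a_1 \cdots$ and arguing by induction yields $u_n = \Pal(a_0 \cdots a_{n-1})$, and since every prefix of $s$ is contained in some $u_n$, we obtain $s = \lim u_n$.

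For the ``if'' direction, given $\Delta = a_0 a_1 \cdots$, the words $u_n = \Pal(a_0 \cdots a_{n-1})$ are nested (each $u_n$ is a proper prefix of $u_{n+1}$ by the very definition of palindromic closure), so $s = \lim u_n$ is a well-defined infinite word. I would verify the two defining properties of a standard episturmian word. Closure of $F(s)$ under reversal is immediate, since each $u_n$ is a palindrome and so $F(s) = \bigcup_n F(u_n)$ is a union of reversal-closed sets. For uniqueness of the right-special factor at each length and for standardness, I would proceed by induction on $n$ using Justin's formula $\Pal(uv) = \psi_u(\Pal(v))\Pal(u)$, which describes the passage from $u_n$ to $u_{n+1}$ as application of an elementary morphism and thereby controls exactly which factors become right-special.

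For the strict characterization, I would first establish that every right-special factor of $s$ is a reversal of some $u_n$ (hence occurs as a suffix of all sufficiently large $u_m$). If every letter appears infinitely often in $\Delta$, then given a right-special factor $v$ and any letter $a$, we may choose $m$ with $a_m = a$ large enough that $v$ is a suffix of $u_m$; Justin's formula then produces $va$ as a factor of $u_{m+1}$, proving strictness. Conversely, if some letter is absent from $\Delta$ beyond a given index, the elementary morphisms applied thereafter cannot produce the missing one-letter extension after the relevant right-special suffix, and strictness fails. The main obstacle is the ``only if'' direction: showing that the palindromic prefixes of a standard episturmian word are obtained from one another by palindromic closure of a single-letter extension requires coordinating the reversal-closure of $F(s)$ with the standardness hypothesis, and is where the full force of the definition is used; without standardness, several palindromic prefixes could compete at the same length and the directive letter $a_n$ would not be uniquely determined.
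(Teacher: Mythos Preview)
The paper does not prove this theorem: it states the result with attribution to \cite{JustinVuillon2000,GlenJustin2009} and then proceeds to use it. There is therefore no ``paper's own proof'' to compare your proposal against.

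Your outline is broadly the standard route taken in those references. The ``if'' direction is usually handled not by a direct induction on $n$ but by observing that Justin's formula in its infinite form gives $s=\psi_{a_0}(s')$, where $s'$ has directive word $a_1a_2\cdots$; one then checks that the image of a standard episturmian word under an elementary morphism $\psi_a$ is again standard episturmian, which reduces everything to the one-letter step. Your inductive sketch via Justin's formula amounts to the same idea but leaves the key verification (that $\psi_a$ preserves the at-most-one-right-special-factor property and standardness) unstated. For the ``only if'' direction, the delicate point you correctly flag---that consecutive palindromic prefixes are related by a single-letter palindromic closure---is exactly where the reversal-closure and the uniqueness of the left-special factor are combined in the cited proofs; your description of how to identify $a_n$ is on the right track but would need the lemma that between two consecutive palindromic prefixes of $s$ no other palindrome occurs as a prefix, which is what forces $u_{n+1}=(u_na_n)^{(+)}$ rather than some longer closure.
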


\noindent The infinite word $\Delta$ is called the \emph{directive
  word}\index{directive word} of the standard word $s$. The
description of the infinite word $s$ can be rephrased by the equation
\begin{displaymath}
  s=\Pal(\Delta)\,.
\end{displaymath}
As a particular case of Justin's
Formula, one has
\begin{equation}\label{EquationMagique}
  u_{n+1}=\psi_{a_0\cdots a_{n-1}}(a_n)u_n\,.
\end{equation}
The words $u_n$ are the only prefixes of $s$ which are palindromes. 
\begin{example}\label{directiveFibo}
The Fibonacci word is a standard episturmian word with directive word
$\Delta=ababa\cdots$. Indeed, by Formula~\eqref{EquationMagique} one has
$\Pal(\Delta)=\psi_{ab}(\Pal(\Delta))$. Since $\psi_{ab}=\varphi^2$
where $\varphi$ is the Fibonacci morphism, we have 
$\Pal(\Delta)=\varphi^2(\Pal(\Delta))$. This shows that
$\Pal(\Delta)$ is the Fibonacci word.
\end{example}
We note that for $n\ge 1$, one has
\begin{equation}
|u_{n+1}|\le 2|u_n|.\label{ineqlg}
\end{equation}
Indeed, set $u_n=u'_nba$. If $a_n=a$, the word $u'_nbaab\tilde{u}'_n$
is palindrome of length at most $2|u_n|$. If $a_n=b$, then
$u'_nbab\tilde{u'_n}$ is a palindrome of length strictly less than
$2|u_n|$.

\begin{example}
As a consequence of Equation~\eqref{EquationMagique}, when $s$ is the Fibonacci word and $\varphi$ the Fibonacci
morphism, we have for every $n\ge 0$
\begin{equation}
u_{n+1}=\varphi^{n}(a)u_n.\label{eqMagique}
\end{equation}
In view of Equation~\eqref{EquationMagique}, we need to show that
$\varphi^n(a)=\psi_{a_0\cdots a_{n-1}}(a_n)$. By Example~\ref{directiveFibo},
the directive word of $s$ is $ababa\cdots$. If $n$ is even,
then $\psi_{a_0\cdots a_{n-1}}(a_n)=\psi_{(ab)^{n/2}}(a)=\varphi^n(a)$
since $\psi_{ab}=\varphi^2$.
If $n$ is odd, then $\psi_{a_0\cdots a_{n-1}}(a_n)=\psi_{(ab)^{(n-1)/2}a}(b)
=\varphi^{n-1}(ab)=\varphi^n(a)$ and the property is true also.

As a consequence, we have in the prefix ordering for every $n\ge 0$,
\begin{equation}
u_{n}<\varphi^{n+1}(a).\label{eqInegLg}
\end{equation}
Indeed, both words are prefixes of the Fibonacci word and it is
enough to compare their lengths. For $n=0$, we have $|u_0|=0$ and $|\varphi(a)|=2$. Next, for $n\ge 1$,
we have by \eqref{eqMagique}, $u_{n}=\varphi^{n-1}(a)u_{n-1}$. Arguing by
induction, we obtain $|u_n|<|\varphi^{n-1}(a)|+|\varphi^{n}(a)|=|\varphi^{n+1}(a)|$.
\end{example}
\subsection{Tree sets}
Let $F$ be a factorial set of words.
For $w \in F$, we consider the set $E_F(w)$ as an undirected graph on the set of vertices which is the disjoint union of $L_F(w)$ and $R_F(w)$ with edges the pairs $(a,b) \in E_F(w)$.
This graph is called the \emph{extension graph}\index{extension graph} of $w$.

A factorial set $F$ is called \emph{biextendable}\index{biextendable set}
if  every $w\in F$ can be extended on the left
and on the right, that is such that $\ell_F(w)>0$ and $r_F(w)>0$.

A biextendable set is a \emph{tree set}\index{tree set} if
 for every $w\in F$, the graph $E_F(w)$
is a tree. A tree set is neutral.

More generally one also defines a \emph{connected}\index{connected set}
(resp. \emph{acyclic}\index{acyclic set}), as a biextendable set $F$
such that for every $w\in F$, the graph $E_F(w)$ is connected
(resp. acyclic). Thus a biextendable set is a tree set
if and only if it is both connected and acyclic.
\begin{example}
The Fibonacci set is a tree set. This follows from the fact that it is
a Sturmian set (see~\cite{BertheDeFeliceDolceLeroyPerrinReutenauerRindone2013a}) and that every Sturmian set is a tree set.
\end{example}
\begin{example}
The \emph{Tribonacci set}\index{Tribonacci set} is the set of factors of the fixed point
of the morphism $\varphi:a\mapsto ab,b\mapsto ac,c\mapsto a$.
It is a also a tree set (see~\cite{BertheDeFeliceDolceLeroyPerrinReutenauerRindone2013a}). The graph $E(\varepsilon)$ is represented in Figure~\ref{figureTribonacci}.
\begin{figure}[hbt]
\centering
\gasset{Nadjust=wh,AHnb=0}
\begin{picture}(30,20)
\node(al)(0,20){$a$}\node(bl)(0,10){$b$}\node(cl)(0,0){$c$}
\node(ar)(30,0){$a$}\node(br)(30,10){$b$}\node(cr)(30,20){$c$}

\drawedge(al,ar){}\drawedge(al,br){}\drawedge(al,cr){}
\drawedge(bl,ar){}\drawedge(cl,ar){}
\end{picture}
\caption{The extension graph of $\varepsilon$ in the Tribonacci set.}
\label{figureTribonacci}
\end{figure}
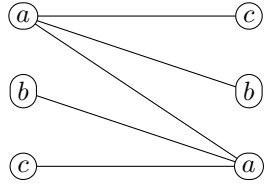
\end{example}
\begin{example}\label{exampleMorphismAC4}
Let $\varphi:a\mapsto ab,b\mapsto a^3b$ be as in Example~\ref{exampleMorphismAC}. Let $F$ be the set of factors of $\varphi^\omega(a)$.
 The graphs $E(a)$ and $E(aa)$ are
shown in Figure~\ref{figureExampleAC}.
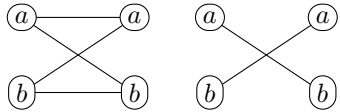
\begin{figure}[hbt]
\centering
\gasset{Nadjust=wh,AHnb=0}
\begin{picture}(50,20)
\put(0,0){
\begin{picture}(20,10)
\node(al)(0,10){$a$}\node(ar)(15,10){$a$}
\node(bl)(0,0){$b$}\node(br)(15,0){$b$}

\drawedge(al,ar){}\drawedge(al,br){}
\drawedge(bl,ar){}\drawedge(bl,br){}
\end{picture}
}
\put(25,0){
\begin{picture}(20,10)
\node(al)(0,10){$a$}\node(ar)(15,10){$a$}
\node(bl)(0,0){$b$}\node(br)(15,0){$b$}

\drawedge(al,br){}
\drawedge(bl,ar){}
\end{picture}
}
\end{picture}
\caption{The extension graphs $E(a)$ and $E(aa)$.}
\label{figureExampleAC}
\end{figure}
The first graph has a cycle of length $4$ and the second one has
two connected components.
Thus $F$ is not a tree set (it is not either an acyclic or a connected set,
as defined in~\cite{BertheDeFeliceDolceLeroyPerrinReutenauerRindone2013a}).
\end{example}
\begin{example}\label{exampleThueMorse2}
Let  $T$ be the Thue-Morse set (see example~\ref{exampleThueMorse}).
 The set $F$ is not a tree set
since $E_T(\varepsilon)$ is the complete bipartite graph $K_{2,2}$ on two sets with $2$ elements.
\end{example}

\section{Return words}\label{sectionReturn}
In this section, we introduce return words. We begin with the
classical notion of left and right return words in factorial sets.
We then develop a notion of limit return sets for pseudowords.
\subsection{Left and right return words}
Let $F$ be a factorial set.
A \emph{return word}\index{return word} to $x\in F$ is a nonempty word $w\in F$ 
such that $xw$ begins and ends by $x$
  but has no internal factor equal to $x$. We denote by $\RR_F(x)$ the set of return words to $x$.

For $x\in F$, we denote
\begin{displaymath}
\Gamma_F(x)=\{w\in F\mid xw\in F\cap A^*x\}.
\end{displaymath}
Thus $\RR_F(x)$ is the set of nonempty words in $\Gamma_F(x)$ without
any proper prefix in $\Gamma_F(x)$. Note that $\Gamma_F(x)$ is
a right unitary submonoid of $A^*$.

One also defines a \emph{left return word}\index{left return word}
\index{return word!left} to $x\in F$
as a nonempty word such that $wx$ begins and ends with $x$
but has no internal factor equal to $x$. We denote by $\RR'_F(x)$
the set of left return words to $x$. One has obviously
$\RR'_F(x)=x\RR_F(x)x^{-1}$.

\begin{example}\label{exampleReturnFibonacci}
Let  $F$ be the Fibonacci set. The sets
of right and left return words to $a$ are $\RR_F(a)=\{a,ba\}$
and $\RR'_F(a)=\{a,ab\}$. Similarly,
$\RR_F(b)=\{ab,aab\}$ and $\RR'_F(b)=\{ba,baa\}$.
\end{example}

\begin{example}\label{exampleReturnPeriodic}
Let $F$ be a periodic set. Let $w$ be a primitive word of length $n$ such that
$F=F(w^*)$. Then, for any word $x\in F$ of length at least
$n$, the set $\RR_F(w)$ is reduced to one word of length $n$.
\end{example}

The following is~\cite[Equation (4.2)]{BertheDeFeliceDolceLeroyPerrinReutenauerRindone2013m}.
\begin{proposition}\label{propositionReturnGenerators}
Let $F$ be a factorial set.
For any $x\in F$, one has $\Gamma_F(x)=\RR_F(x)^*\cap x^{-1}F$.
\end{proposition}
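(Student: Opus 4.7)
The plan is to prove the two inclusions separately, with the forward inclusion $\Gamma_F(x) \subseteq \RR_F(x)^* \cap x^{-1}F$ being the more substantive direction.

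For the forward direction, let $w \in \Gamma_F(x)$, so $xw \in F \cap A^*x$. Immediately $w \in x^{-1}F$, so the main task is to show $w \in \RR_F(x)^*$. If $w = \varepsilon$, this is trivial. Otherwise I would list the occurrences of $x$ as a factor in $xw$ by their starting positions $0 = p_0 < p_1 < \cdots < p_k = |xw| - |x|$ (the first exists because $xw$ starts with $x$, the last because $xw$ ends with $x$). I would then define $r_i$ by the factorization $xw = xr_1 r_2 \cdots r_k$, where $|r_1 \cdots r_i| = p_i$; equivalently, the factor of $xw$ occupying positions $p_{i-1}$ through $p_i + |x| - 1$ equals $xr_i$. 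Each $r_i$ is nonempty because $p_i > p_{i-1}$. To check that $r_i \in \RR_F(x)$, I would observe that $xr_i$ lies in $F$ (as a factor of $xw \in F$, using that $F$ is factorial), starts with $x$, ends with the occurrence of $x$ at position $p_i$, and by maximality of the listing of positions has no internal occurrence of $x$. Thus $w = r_1 \cdots r_k \in \RR_F(x)^*$.

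For the reverse direction, let $w \in \RR_F(x)^* \cap x^{-1}F$. Then $xw \in F$ and, since $F$ is factorial, $w \in F$. It remains to verify that $xw$ ends with $x$. If $w = \varepsilon$, then $xw = x \in A^*x$. Otherwise write $w = r_1 r_2 \cdots r_k$ with $r_i \in \RR_F(x)$, and argue by induction on $k$: the base case $xr_1$ ends with $x$ by definition of return word; for the inductive step, if $xr_1 \cdots r_{k-1} = u \cdot x$ then $xr_1 \cdots r_k = u \cdot (xr_k)$, and $xr_k$ ends with $x$. Hence $xw \in F \cap A^*x$, giving $w \in \Gamma_F(x)$.

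The main subtlety, and the only real obstacle, is handling potential overlaps between consecutive occurrences of $x$ in $xw$ when defining the $r_i$ in the forward direction: one must be careful that $r_i$ is defined as the block making $xr_i$ run from position $p_{i-1}$ to position $p_i + |x| - 1$, rather than trying to write $xw$ as a concatenation of disjoint $x$-bordered blocks. Once the indexing is set up this way, both inclusions are immediate, and the empty-word case slots in without issue as the empty product in $\RR_F(x)^*$.
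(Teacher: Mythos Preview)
Your proposal is correct and follows essentially the same approach as the paper. The only cosmetic difference is in the forward direction: the paper phrases the decomposition as an induction using the right-unitary property of $\Gamma_F(x)$ (if $w\in\Gamma_F(x)$ is not already a return word, split off a proper nonempty prefix $u\in\Gamma_F(x)$ and use right-unitarity to get the suffix back in $\Gamma_F(x)$), whereas you list all occurrences of $x$ in $xw$ at once and read off the $r_i$ directly; these are two presentations of the same factorization, and your explicit handling of overlaps is a point the paper leaves implicit.
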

\begin{proof}
If a nonempty word $w$ is in $\Gamma_F(x)$ and is not in $\RR_F(x)$,
then $w=uv$ with $u\in \Gamma_F(x)$ and $v$ nonempty. Since
$\gamma_F(x)$ is right unitary, we have $v\in \Gamma_F(x)$,
whence the conclusion $w\in \RR_F(x)^*$ by induction on the length
of $w$. Moreover, one has $xw\in F$ and thus $w\in x^{-1}F$.

Conversely, assume that $w$ is a nonempty word
 in $\RR_F(x)^*\cap w^{-1}F$. Set 
$w=uv$ with $u\in \RR_F(x)$ and $v\in \RR_F(x)^*$. Then
$xw=xuv\in A^*xv\subset A^*x$ and $xw\in F$. Thus $w\in \Gamma_F(x)$.
\end{proof}
Note that, as a consequence, for $x,y\in F$ such that $xy\in F$, we have
\begin{equation}
\RR_F(xy)\subset \RR_F(y)^*.\label{eqReturnSuf}
\end{equation}
Indeed, if $w\in\RR_F(xy)$, then $xyw\in F\cap A^*xy$ implies
$yw\in F\cap A^y$ and thus the result follows since
$\Gamma_F(y)\subset \RR_F(y)^*$ by Proposition~\ref{propositionReturnGenerators}.

 The dual of Proposition~\ref{propositionReturnGenerators} 
and of Equation~\eqref{eqReturnSuf}
hold for left return words.

By a result of~\cite{BalkovaPelantovaSteiner2008}, in a uniformly recurrent
neutral set, one has
\begin{equation}
\Card(\RR_F(x))=\Card(A)\label{eqReturn}
\end{equation}
for every $x\in F$. 
\begin{example}\label{exampleMorphismAC3}
Let $\varphi:a\mapsto ab,b\mapsto a^3b$ be as in Example~\ref{exampleMorphismAC}. Let $F$ be the set of factors of $\varphi^\omega(a)$.
One has $\RR_F(a)=\{a,ba\}$ but
$\RR_F(aa)=\{a,babaa,babababaa\}$. Thus the number of return
words is not constant in a uniformly recurrent set which is not neutral.
\end{example}

There is an explicit form for the return words in a Sturmian set
(see~\cite[Theorem 4.4, Corollaries~4.1 and~4.5]{JustinVuillon2000}).
\begin{proposition}\label{propJustinVuillon}
  Let $s$ be a standard strict episturmian word over $A$, let
  $\Delta=a_0a_1\cdots$ be its directive word, and let $(u_n)_{n\ge 0}$ be its
  sequence of palindrome prefixes. 
  \begin{enumerate}
  \item[\upshape{(i)}] The  left return words to $u_n$ are the
    words $\psi_{a_0\cdots a_{n-1}}(a)$ for $a\in A$.
  \item[\upshape{(ii)}] For each factor $u$ of $s$, let $n$
be the minimal integer such that $u$ is a factor of $u_n$.
There there is a unique word
    $z$ such that $zu$ is a prefix of $u_n$ and the left return words to
    $u$ are the words $z^{-1}yz$, where $y$ ranges over the  left
    return words to $u_n$.
  \end{enumerate}
\end{proposition}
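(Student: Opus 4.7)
The plan is to prove (i) by induction on $n$ using Justin's Formula \eqref{JustinInfini}, and then derive (ii) by a conjugation argument. By \eqref{JustinInfini}, we have the factorization $s = \psi_{a_0}(s^{(1)})$, where $s^{(1)} = \Pal(a_1 a_2 \cdots)$ is itself a standard strict episturmian word (strictness is preserved because each letter still occurs infinitely often in $a_1 a_2 \cdots$), and applying the finite-word Justin Formula to the prefix $a_0 \cdot a_1 \cdots a_{n-1}$ yields $u_n = \psi_{a_0}(u_{n-1}^{(1)}) \cdot a_0$ where $u_{n-1}^{(1)} = \Pal(a_1 \cdots a_{n-1})$ is the $(n-1)$-th palindromic prefix of $s^{(1)}$.

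For the base case $n = 1$, one has $u_1 = a_0$, and the decomposition $s = \psi_{a_0}(s^{(1)})$ immediately identifies the left return words to $a_0$ in $s$ with the blocks $\psi_{a_0}(b)$ as $b$ ranges over letters of $s^{(1)}$: every $a_0$ in $s$ is the first letter of some block $\psi_{a_0}(b) \in \{a_0\} \cup \{a_0 b : b \neq a_0\}$, and since $s^{(1)}$ is strict, every $b \in A$ actually occurs, so the complete set of left return words is $\{\psi_{a_0}(a) : a \in A\}$. For the inductive step, the key is the synchronization property of $\psi_{a_0}$: since $\psi_{a_0}(a_0) = a_0$ and $\psi_{a_0}(b) = a_0 b$ for $b \neq a_0$, every occurrence of $a_0$ in $s$ marks the start of a $\psi_{a_0}$-block. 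Consequently, occurrences of $u_n = \psi_{a_0}(u_{n-1}^{(1)}) \cdot a_0$---which begins and ends with $a_0$ and whose interior is a concatenation of complete blocks---are in bijection with occurrences of $u_{n-1}^{(1)}$ in $s^{(1)}$, and the gap between consecutive occurrences of $u_n$ in $s$ is exactly the $\psi_{a_0}$-image of the gap between the corresponding consecutive occurrences of $u_{n-1}^{(1)}$ in $s^{(1)}$. The induction hypothesis, applied to $s^{(1)}$, gives the left return words to $u_{n-1}^{(1)}$ as $\psi_{a_1 \cdots a_{n-1}}(a)$ for $a \in A$; taking $\psi_{a_0}$-images yields the desired $\psi_{a_0 a_1 \cdots a_{n-1}}(a)$ for $a \in A$.

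For (ii), fix a factor $u$ of $s$ and let $n$ be the minimal integer with $u$ a factor of $u_n$. Minimality implies (a) that $u$ appears exactly once as a factor of $u_n$, and (b) that every occurrence of $u$ in $s$ lies inside a (unique) occurrence of $u_n$: a second occurrence of $u$ in $u_n$, or an occurrence of $u$ in $s$ not contained in any occurrence of $u_n$, would --- by exploiting that each palindromic prefix $u_m$ is simultaneously a prefix and a suffix of $u_{m+1}$ --- place $u$ as a factor of some strictly shorter $u_m$, contradicting minimality. Assertion (a) gives the unique factorization $u_n = zuz'$ and hence the unique $z$; assertion (b) gives a bijection between consecutive occurrences of $u_n$ in $s$ and consecutive occurrences of $u$ in $s$, each shifted by $|z|$. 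If $y$ is a left return word to $u_n$ corresponding to $u_n$-occurrences at positions $p$ and $p+|y|$, then $u$ occurs at positions $p+|z|$ and $p+|y|+|z|$, and a direct inspection of the substring from $p+|z|$ to $p+|y|+|z|-1$ (using that $y$ begins with $z$, since $yu_n$ begins with $u_n = zuz'$) shows that the corresponding left return word to $u$ is $z^{-1}yz$.

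The main obstacle is the synchronization lemma underlying (i), namely that every $a_0$ in $s = \psi_{a_0}(s^{(1)})$ is a block-start, which powers the bijection between occurrences of $u_n$ in $s$ and occurrences of $u_{n-1}^{(1)}$ in $s^{(1)}$. The parallel unique-extension claim in (ii) --- that every occurrence of $u$ in $s$ extends to an occurrence of $u_n$ --- is of the same flavour and is the technical heart of that part; both rest on combinatorial properties specific to episturmian palindromic prefixes.
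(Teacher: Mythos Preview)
The paper does not prove this proposition: it is stated with a reference to \cite[Theorem~4.4, Corollaries~4.1 and~4.5]{JustinVuillon2000} and no argument is given in the text, so there is nothing to compare your proposal against.

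For what it is worth, your outline is the standard one and matches the approach of the cited source. For (i), descending through $s=\psi_{a_0}(s^{(1)})$ and using the synchronisation property of $\psi_{a_0}$ (every occurrence of $a_0$ in $s$ is a block start, since $\psi_{a_0}(c)\in\{a_0,a_0c\}$) to set up a bijection between occurrences of $u_n=\psi_{a_0}(u_{n-1}^{(1)})a_0$ in $s$ and occurrences of $u_{n-1}^{(1)}$ in $s^{(1)}$ is exactly the right mechanism; the images under $\psi_{a_0}$ of the return words for $u_{n-1}^{(1)}$ then give the return words for $u_n$. For (ii), the conjugation argument from the unique position of $u$ inside $u_n$ is also the standard route. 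You correctly isolate the two genuinely nontrivial points---the synchronisation lemma for (i), and the claims that $u$ occurs exactly once in $u_n$ and that every occurrence of $u$ in $s$ extends to an occurrence of $u_n$ for (ii)---and you leave them as assertions. Those are precisely the combinatorial facts established in \cite{JustinVuillon2000}; the present paper simply imports them.
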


\begin{example}\label{exampleReturnFibo}
Let $\varphi$ be the Fibonacci morphism and
let $F$ be the Fibonacci set. We have for $n\ge 1$,
\begin{equation}
\RR'_F(\varphi^{2n}(aa))=\{\varphi^{2n}(a),\varphi^{2n+1}(a)\}.\label{eqReturnFibo}
\end{equation}
For example, $\varphi^2(aa)=abaaba$ and $\RR'_F(\varphi^2(aa))=\{aba,abaab\}$.
Note that Equation~\eqref{eqReturnFibo} does not hold for $n=0$.
Indeed, $\RR'_F(aa)=\{aab,aabab\}\ne\{a,ab\}$.

To show \eqref{eqReturnFibo}, we first observe that in the prefix order
for $n\ge 1$,
\begin{equation}
u_{2n}<\varphi^{2n}(aa)\le u_{2n+1}.\label{eqdouble}
\end{equation}
Indeed, one has by \eqref{eqMagique}
\begin{equation}
u_{2n+1}=\varphi^{2n}(a)u_{2n},
\label{equ_2n+1}
\end{equation}
and, since $n\ge 1$,
\begin{equation}
u_{2n}=\varphi^{2n-1}(a)u_{2n-1}=\varphi^{2n-1}(a)\varphi^{2n-2}(a)u_{2n-2}=\varphi^{2n}(a)u_{2n-2}.\label{equ_2n}
\end{equation}
Thus $u_{2n+1}=\varphi^{2n}(a)u_{2n}=\varphi^{2n}(aa)u_{2n-2}$. This
proves the second inequality in~\eqref{eqdouble}. Since
$|u_{2n}|\le2|u_{2n-1}|$ by \eqref{ineqlg}, and $|u_{2n-1}|<|\varphi^{2n}(a)|$
by \eqref{eqInegLg}, we obtain $|u_{2n}|<2|\varphi^{2n}(a)|$
and this proves the first inequality .

By \eqref{eqdouble}, the minimal integer $m$
such that $\varphi^{2n}(aa)$ is a factor of $u_m$ is $m={2n+1}$.
Thus, by Proposition~\ref{propJustinVuillon} (ii), one
has $\RR'_F(\varphi^{2n})(aa)=\RR'_F(u_{2n+1})$. On the other hand, 
by Proposition ~\ref{propJustinVuillon} (i), we have
$\RR'_F(u_{2n+1})=\{\psi_{(ab)^na}(a),\psi_{(ab)^na}(b)\}$. Since
$\psi_{(ab)^na}(a)=\varphi^{2n}(a)$ and $\psi_{(ab)^na}(b)=\varphi^{2n}(ab)
=\varphi^{n+1}(a)$, this proves \eqref{eqReturnFibo}.

\end{example}

The following is \cite[Theorem 4.5]{BertheDeFeliceDolceLeroyPerrinReutenauerRindone2013a}. It shows that in a tree set, a property much
stronger than Equation~\eqref{eqReturn} holds.
\begin{theorem}\label{theoremReturn}
Let $F$ be a uniformly recurrent tree set. For any $x\in F$, the
set $\RR_F(x)$ is a basis of $\F(A)$.
\end{theorem}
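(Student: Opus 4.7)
My plan combines a cardinality count with a generation argument, concluding via the Hopfian property of free groups. The cardinality is easy: as noted in the paper, a tree set is neutral, since for each $w\in F$ the extension graph $E_F(w)$ is a tree on $\ell_F(w)+r_F(w)$ vertices and therefore has $e_F(w)=\ell_F(w)+r_F(w)-1$ edges, giving $m_F(w)=0$. The formula~\eqref{eqReturn} of Balkov\'a--Pelantov\'a--Steiner then yields $\Card(\RR_F(x))=\Card(A)$. Thus $\RR_F(x)\subset\F(A)$ has exactly the right cardinality to be a basis: its size matches the rank of $\F(A)$.

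Given this, it suffices to show that $\RR_F(x)$ generates $\F(A)$: a generating set of $\F(A)$ of cardinality equal to the rank of $\F(A)$ is automatically a basis, by the Hopfian property of finitely generated free groups. (Otherwise, mapping a basis of $\F(B)$ with $|B|=\Card(A)$ onto $\RR_F(x)$ yields a surjection $\F(B)\twoheadrightarrow\F(A)$ with nontrivial kernel, realizing $\F(A)$ as a proper quotient of a free group of the same rank, contradicting Hopfianness.)

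The main obstacle is thus to show that $\langle\RR_F(x)\rangle$ exhausts $\F(A)$, equivalently that every letter of $A$ lies in $\langle\RR_F(x)\rangle$. My plan here is to invoke the Finite Index Basis Theorem of~\cite{BertheDeFeliceDolceLeroyPerrinReutenauerRindone2013a}, which applies precisely in the tree-set setting and says that every finite $F$-maximal bifix code $X$ of $F$-degree $d$ is a basis of a subgroup of $\F(A)$ of index $d$ (hence of rank $1+d(\Card(A)-1)$ by Schreier). The natural candidate code is built from the complete returns $\{xr:r\in\RR_F(x)\}$, which form a bifix code: if $xr_1$ were a prefix of $xr_2$ with $r_1\neq r_2$, then $x$ would appear as an internal factor of $xr_2$, contradicting the defining property of a return word, and the suffix case is symmetric. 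After embedding this, if necessary, into an $F$-maximal bifix code and computing its $F$-degree, the theorem delivers a subgroup of $\F(A)$ of prescribed rank and index. A careful comparison with $\langle\RR_F(x)\rangle$, tracking the conjugation by $x$ and accounting for the one extra generator $x$ itself, then forces $\langle\RR_F(x)\rangle=\F(A)$. The tree hypothesis is indispensable in both stages: it forces neutrality (and hence the cardinality count) and it is exactly the hypothesis under which the Finite Index Basis Theorem holds with the matching rank formula.
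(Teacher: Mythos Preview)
Your cardinality argument and the Hopfian conclusion are exactly right and match the paper's approach. The divergence is in how generation of $\F(A)$ by $\RR_F(x)$ is established.

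The paper does not go through the Finite Index Basis Theorem. Instead it invokes Theorem~\ref{theoremReturnConnected} (from the same source~\cite{BertheDeFeliceDolceLeroyPerrinReutenauerRindone2013a}): for a uniformly recurrent \emph{connected} set, $\RR_F(w)$ generates $\F(A)$ for every $w\in F$. Since a tree set is by definition connected, generation is immediate. The two halves of the tree hypothesis thus act separately: neutrality (which already follows from acyclicity) gives $\Card(\RR_F(x))=\Card(A)$ via~\eqref{eqReturn}, while connectedness gives generation.

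Your route through the Finite Index Basis Theorem has two problems. First, it is circular in the original development: the FIBT in~\cite{BertheDeFeliceDolceLeroyPerrinReutenauerRindone2015} is proved using the results of~\cite{BertheDeFeliceDolceLeroyPerrinReutenauerRindone2013a}, including the Return Theorem itself. Second, and independently of that, the argument as written is incomplete. The set $\{xr:r\in\RR_F(x)\}$ of complete returns is indeed bifix, but it is \emph{not} $F$-maximal in general: for $F$ the Fibonacci set and $x=a$ one gets $\{aa,aba\}$, yet $\{aa,aba,b\}\subset F$ is a strictly larger bifix code. Once you embed into an $F$-maximal bifix code, the cardinality no longer equals $\Card(A)$, the Schreier formula no longer forces index~$1$, and your ``careful comparison, tracking the conjugation by $x$ and accounting for the one extra generator'' is precisely where a new idea would be needed---and none is supplied. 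It is not evident how knowing that some enlarged bifix code is a basis of some proper subgroup of $\F(A)$ yields $\langle\RR_F(x)\rangle=\F(A)$.
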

The proof uses Equation~\eqref{eqReturn} and the following
result~\cite[Theorem 4.7]{BertheDeFeliceDolceLeroyPerrinReutenauerRindone2013a}.
\begin{theorem}\label{theoremReturnConnected}
Let $F$ be a uniformly recurrent connected set. For any $w\in F$,
the set $\RR_F(w)$ generates the free group $\F(A)$.
\end{theorem}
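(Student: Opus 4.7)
The plan is to show that every letter $a \in A$ lies in the subgroup $H := \langle \RR_F(w) \rangle$ of $\F(A)$, which will force $H = \F(A)$. The starting point is Proposition~\ref{propositionReturnGenerators}: it identifies $H$ with the subgroup of $\F(A)$ generated by the larger monoid $\Gamma_F(w) = \{v \in F : wv \in F \cap A^*w\}$ of words that ``close a loop at $w$''. Working with $\Gamma_F(w)$ is convenient because its elements arise directly from factors of $F$ that bracket two occurrences of $w$.

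First I would fix $a \in A$ and, using uniform recurrence applied both to $w$ and to $a$, produce a word $wpaqw \in F$ with $a$ occurring in the specified interior position, yielding $paqw \in \Gamma_F(w) \subseteq H$. The heart of the argument is to build companion elements of $\Gamma_F(w)$, coming from words $wp'q'w \in F$ that agree with $wpaqw$ outside a neighbourhood of the position of $a$ but replace the letter $a$ there by another admissible extension. To manufacture such replacements, I would look at the extension graph $E_F(v)$ of a suitable factor $v$ of $wpaqw$ straddling the position of $a$: connectedness of $E_F(v)$ supplies a path of alternating left and right extensions linking the edge that uses $a$ to edges using other letters of $A$, and each step of the path, combined with uniform recurrence, gives a new admissible element of $\Gamma_F(w)$.

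I would then proceed by induction on the length of the path in $E_F(v)$. At each step, the two successive $\Gamma_F(w)$-elements differ in $\F(A)$ by a controlled single-letter factor, modulo elements of $H$ already shown to be there. Telescoping over the whole path combines these single-letter factors into an expression for $a$ (or for a conjugate whose conjugator is itself in $H$) as an element of $H$. Since $a$ was arbitrary, this gives $A \subseteq H$ and therefore $H = \F(A)$.

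The main obstacle will be the algebraic bookkeeping behind the telescoping. Connectedness of $E_F(v)$ is a purely combinatorial fact about a bipartite graph, whereas what is needed is a precise identity in the free group. One has to ensure that the sequence of replacements $wpaqw \leadsto wp'q'w$ has compatible boundary behaviour with the fixed prefix $wp$ and suffix $qw$, and that the single-step differences do not collapse in $\F(A)$ to the identity or to unwanted conjugates. A careful choice of the factor $v$ and of the path in $E_F(v)$, together with an inductive hypothesis strong enough to absorb conjugators into $H$ as they arise, should make the telescoping produce exactly the desired letter $a$.
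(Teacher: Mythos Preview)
Note first that the present paper does not actually prove Theorem~\ref{theoremReturnConnected}: it is quoted from \cite[Theorem 4.7]{BertheDeFeliceDolceLeroyPerrinReutenauerRindone2013a} and used as a black box in the derivation of Theorem~\ref{theoremReturn}. So the comparison below is with the argument in that reference.

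Your strategy has a genuine gap, and it is precisely the part you flag as ``algebraic bookkeeping''. You propose to fix a letter $a$, produce some $paqw\in\Gamma_F(w)$, and then, using a path in $E_F(v)$ for a well-chosen factor $v$ near the position of $a$, manufacture companion elements of $\Gamma_F(w)$ whose successive quotients telescope to $a$. But a step in $E_F(v)$ only tells you that two words $cvd$ and $c'vd$ (or $cvd$ and $cvd'$) lie in $F$; it gives you no control over what lies to the left of $c,c'$ or to the right of $d,d'$ once you embed these short words into something of the form $w(\cdots)w$. Uniform recurrence lets you pad each of $cvd$, $c'vd$ out to an element of $\Gamma_F(w)$, but the paddings are a priori unrelated, so the quotient of two consecutive companions in $\F(A)$ is an arbitrary element of $H$ times a word involving \emph{many} letters, not a single letter. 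There is no mechanism in your sketch that forces the long unrelated paddings to cancel, and without that the telescoping produces nothing usable. This is not a bookkeeping issue that a careful choice of $v$ will fix; it is a structural mismatch between the local information in $E_F(v)$ and the global statement $a\in H$.

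The proof in \cite{BertheDeFeliceDolceLeroyPerrinReutenauerRindone2013a} avoids this by arguing by induction on $|w|$. The base case $w=\varepsilon$ gives $\RR_F(\varepsilon)=A$. For the inductive step one compares $\RR_F(w)$ with the sets $\RR_F(wa)$ for $a\in R_F(w)$ (equivalently, one works with left returns and left extensions); the connectedness of $E_F(w)$ --- for that specific $w$, not for an arbitrary internal factor --- is exactly what is needed to show that the subgroup generated does not shrink when one passes from $w$ to $wa$. The induction localises the use of connectedness to the single extension graph $E_F(w)$ and replaces your uncontrolled paddings by the canonical decomposition of return words to $w$ in terms of return words to its one-letter extensions. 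If you want to repair your approach, the natural move is to reorganise it into this induction rather than to try to isolate letters directly.
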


\begin{example}
Let $F$ be the Tribonacci set on $A=\{a,b,c\}$. Then $\RR_F(a)=\{a,ba,ca\}$, which is
easily seen to be a basis of $\F(A)$.
\end{example}
\subsection{Limit return sets}\label{sectionLimitReturn}
Let $k\ge 1$ be an integer.
A  recurrent set $F$ is \emph{$k$-bounded}\index{$k$-bounded}
if  every $x\in F$ has at most $k$ return words. The set
$F$ is \emph{bounded}\index{bounded} if it is $k$-bounded for
some $k$. 

Thus, by Equation~\eqref{eqReturn}, a neutral set is $k$-bounded
with $k=\Card(A)$.

Clearly, any bounded set is uniformly recurrent.
There exist uniformly recurrent sets which are not bounded
 (see~\cite[Example 3.17]{DurandLeroyRichomme2013}).

Let $F$ be a $k$-bounded set. Consider an element $x$ which belongs to a group
in $J(F)$.
Let $r_n$ be a  sequence of finite prefixes
of $x$ strictly increasing for the prefix order. Similarly, let
$\ell_n$ be a sequence of finite suffixes of $x$ stricly increasing
for the suffix order. Since $x^2\in J(F)$, we have $\ell_nr_n\in F$. Let 
\begin{displaymath}
\RR_n=r_n\RR_F(\ell_nr_n)r_n^{-1}.
\end{displaymath}
Up to taking a subsequence, we may assume that the set
$\RR_n$ has a fixed number $\ell\le k$ of elements 
$r_{n,1},\ldots r_{n,\ell}$ and that
the sequence $(r_{n,1},\ldots,r_{n,\ell})$ is convergent in $\widehat{A^*}^\ell$. Its
limit $\RR$ is called a \emph{limit return set}\index{limit return set}
 to $x$. The sequence $(\ell_n,r_n,\RR_n)$ is called an \emph{approximating sequence}\index{approximating sequence}
for $\RR$.

We note that 
\begin{equation}
\RR_{n+1}\subset \RR_n^*.\label{eqRn}
\end{equation}
Indeed, set $r_{n+1}=r_ns_n$.
Since $\ell_{n+1}r_n$ is a prefix of $\ell_{n+1}r_{n+1}$, we have
by the dual of \eqref{eqReturnSuf} the inclusion
$\RR'_F(\ell_{n+1}r_{n+1})\subset \RR'_F(\ell_{n+1}r_n)^*$. Thus
\begin{eqnarray*}
\RR_F(\ell_{n+1}r_{n+1})&=&r_{n+1}^{-1}\ell_{n+1}^{-1}\RR'_F(\ell_{n+1}r_{n+1})\ell_{n+1}r_{n+1},\\
&\subset &r_{n+1}^{-1}\ell_{n+1}^{-1}\RR'_F(\ell_{n+1}r_{n})^*\ell_{n+1}r_{n+1}\\
&\subset &s_n^{-1}\RR_F(\ell_{n+1}r_{n})^*s_n.
\end{eqnarray*}
Since $\ell_nr_n$ is a suffix of $\ell_{n+1}r_n$, we have by~\eqref{eqReturnSuf}
the inclusion $\RR_F(\ell_{n+1}r_{n})\subset \RR_F(\ell_{n}r_n)^*$. Thus
we obtain
\begin{eqnarray*}
\RR_{n+1}&=&r_{n+1}\RR_F(\ell_{n+1}r_{n+1})r_{n+1}^{-1},\\
&\subset&r_{n+1}s_n^{-1}\RR_F(\ell_{n+1}r_{n})^*s_nr_{n+1}^{-1},\\
&\subset& r_n\RR_F(\ell_nr_n)^*r_n^{-1}=\RR_n^*.
\end{eqnarray*}
\begin{example}\label{exampleGroupFibonacci}
Let $\varphi$ be the Fibonacci morphism and $F$ be
 the Fibonacci set. We will show that 
$\{\varphi^\omega(a),\varphi^\omega(ba)\}$ is a limit return
set to $x=\varphi^\omega(a)$.

For this, consider the sequence $\ell_n=r_n=\varphi^{2n}(a)$. The sequence
is increasing both for the prefix and the suffix order and its
terms are both prefixes and suffixes of $x$. The set 
$\RR_n=r_n\RR_F(\ell_nr_n)r_n^{-1}=\ell_n^{-1}\RR'_F(\ell_nr_n)\ell_n$
 is by \eqref{eqReturnFibo}
\begin{eqnarray*}
\RR_n&=&\varphi^{2n}(a)^{-1}\{\varphi^{2n}(aa),\varphi^{2n+1}(a)\}\varphi^{2n}(a)
=\{\varphi^{2n}(a),\varphi^{2n}(b)\varphi^{2n}(a)\}\\
&=&\{\varphi^{2n}(a),\varphi^{2n}(ba)\}.
\end{eqnarray*} 
The sequence $(\varphi^{n!}(a),\varphi^{n!}(ba)\}$ converges to
$\{\varphi^\omega(a),\varphi^\omega(ba)\}$, which proves the claim.
\end{example}

The following example shows that in degenerated cases, a limit
return set may contain finite words.
\begin{example}\label{exampleReturnPeriodic2}
Let $F$ be a periodic set of period $n$. By Example~\ref{exampleReturnPeriodic}
a return set to any word of length larger than $n$ is formed
of one word of length $n$. Thus any return set to a pseudoword
$x\in J(F)$ is formed of one word of length $n$.
\end{example}
\section{Sch\"utzenberger groups} \label{sectionSchutzenbergerGroups}

Let $M$ be a topological monoid.
For an element $x\in M$, we denote by $H(x)$
the $\HH$-class of $x$. 

Let $H$ be an $\HH$-class of $M$. Set $T(H)=\{x\in M\mid Hx=H\}$.
Each $x\in T(H)$ defines a map $\rho_x:H\rightarrow H$
defined by $\rho_x(h)=hx$.
The set of the translations $\rho_x$ for $x\in T(H)$
is a topological group acting by permutations on $H$,
denoted $\Gamma(H)$. The groups corresponding to different
$\HH$-classes contained in the same $\JJ$-class $J$
are continuously isomorphic and the
equivalence 
called the \emph{Sch\"utzenberger group}\index{Sch\"utzenberger group}
\index{group!Sch\"utzenberger} of $J$, denoted $G(J)$.

If $J$ is a regular $\JJ$-class, any $\HH$-class of $J$ which is a group
is isomorphic to $G(J)$. Indeed, $H\subset T(H)$
and the restriction to $H$ of the mapping 
$\rho:x\in T(H)\rightarrow \rho_x\in \Gamma(H)$ is an isomorphism
(see~\cite{Lallement1979} for a more detailed presentation).

The following is \cite[Proposition 5.2]{AlmeidaCosta2013}.
\begin{theorem}\label{theoremPresentationTree}
Let $F$ be a non-periodic bounded set. Let  $x\in \JJ(F)$
be such that $H(x)$ is a group and let $\RR$ be a limit return
set to $x$. Then $H(x)$ is the closure of the semigroup
generated by $\RR$.
\end{theorem}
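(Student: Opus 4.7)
The proof is by the two inclusions $\overline{\langle\RR\rangle}\subseteq H(x)$ and $H(x)\subseteq\overline{\langle\RR\rangle}$.

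\emph{Easy inclusion.} Since $H(x)$ is closed in $\widehat{A^*}$, it suffices to show $\RR\subseteq H(x)$. Fix $\rho=\lim_n\rho^{(n)}\in\RR$ with $\rho^{(n)}=r_nw^{(n)}r_n^{-1}\in\RR_n$, $w^{(n)}\in\RR_F(\ell_nr_n)$. Non-periodicity of $F$ (via Fine--Wilf applied to the overlapping occurrences of $\ell_nr_n$ that would otherwise be forced) gives $|w^{(n)}|\ge|r_n|$ for large $n$, so $\rho^{(n)}\in A^*$ starts with $r_n$, ends with $\ell_n$, and is a factor of $\ell_nr_nw^{(n)}\in F$. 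Passing to the limit, $\rho$ shares its finite prefixes and suffixes with $x$, $F(\rho)\subseteq F$, and $|\rho^{(n)}|\to\infty$ forces $\rho$ to be an infinite pseudoword. By Theorem~\ref{theoremUniformRecurrence} one has $\rho\in J(F)$, and the characterisation of $\RR$- and $\LL$-equivalence in $J(F)$ via one-sided infinite words then gives $\rho\,\HH\,x$, so $\rho\in H(x)$.

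\emph{Hard inclusion.} $\overline{\langle\RR\rangle}$ is a closed subsemigroup of the compact group $H(x)$, hence a closed subgroup (the $\omega$-power of any generator supplies the identity $e=x^\omega$, and then $\rho^{\omega-1}$ supplies inverses). By the profinite structure of $H(x)$, it suffices to check that for every continuous morphism $\Phi\colon\widehat{A^*}\to M$ onto a finite monoid $M$ and every $y\in H(x)$, one has $\Phi(y)\in\Phi(\langle\RR\rangle)$. Since $y\,\HH\,x$, $y$ has the same finite prefixes and suffixes as $x$; the pseudoword $\ell_nyr_n$ then starts and ends with $\ell_nr_n$ and, by a boundary-factor check using the bi-infinite word $\stackrel{\leftarrow}{x}\cdot\stackrel{\rightarrow}{x}\in X(F)$, lies in $\overline F$. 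Approximate $\ell_nyr_n$ by finite words in $F$ of the form $\ell_nr_nz_k$ with $z_k\in\Gamma_F(\ell_nr_n)\subseteq\RR_F(\ell_nr_n)^*$ (Proposition~\ref{propositionReturnGenerators}); for large $k$ the element $\sigma_{n,k}:=r_nz_kr_n^{-1}$ is a word of $A^*$ belonging to $\langle\RR_n\rangle$, by telescoping through partial products long enough to end with $r_n$. Continuity plus finiteness of $M$ yields, for some $k$,
\begin{displaymath}
\Phi(\ell_n)\,\Phi(\sigma_{n,k})\,\Phi(r_n)=\Phi(\ell_n)\,\Phi(y)\,\Phi(r_n)\quad\text{in }M.
\end{displaymath}
For $n$ large one has $\Phi(\RR_n)=\Phi(\RR)$, $\Phi(\ell_n)\,\LL\,\Phi(e)$, $\Phi(r_n)\,\RR\,\Phi(e)$, and $\Phi(y),\Phi(\sigma_{n,k})\in H(\Phi(e))$. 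Choosing $v,u\in M$ with $v\,\Phi(\ell_n)=\Phi(e)=\Phi(r_n)\,u$ and using the standard identity $\Phi(e)\,h\,\Phi(e)=h$ for $h\in H(\Phi(e))$ then gives $\Phi(\sigma_{n,k})=\Phi(y)$, so $\Phi(y)\in\Phi(\langle\RR\rangle)$.

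\emph{Main obstacle.} The delicate step is the hard inclusion: both verifying $\ell_nyr_n\in\overline F$ and realising the conjugate $\sigma_{n,k}$ as a word-product of $\RR_n$-elements rest on length bounds for return words that come from non-periodicity of $F$. Without non-periodicity, the formal expressions $r_n(\cdot)r_n^{-1}$ live naturally only in the free group $\F(A)$ and may fail to reduce to positive words of $A^*$, and the inclusion can even fail (as in the periodic case of Example~\ref{exampleReturnPeriodic2}, where $\RR$ already falls outside $H(x)$).
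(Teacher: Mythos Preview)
The paper does not actually prove Theorem~\ref{theoremPresentationTree}: it is quoted verbatim from \cite[Proposition~5.2]{AlmeidaCosta2013}, and the only commentary offered is the consequence $\overline{\RR^*}=\bigcap_{n\ge0}\overline{\RR_n^*}$. So there is no in-paper argument to compare against; I can only assess your sketch on its own merits.

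Your two-inclusion strategy is sound and the hard inclusion, in particular, is carried out correctly: the reduction to finite quotients, the approximation of $\ell_nyr_n$ by words $\ell_nr_nz_k$ with $z_k\in\Gamma_F(\ell_nr_n)\subset\RR_F(\ell_nr_n)^*$, and the cancellation via $\Phi(\ell_n)\,\LL\,\Phi(e)$, $\Phi(r_n)\,\RR\,\Phi(e)$ all work. A cleaner way to see $\ell_nyr_n\in\overline F$ is to observe it is a factor of $y^3\in H(x)\subset\overline F$, since $y$ ends with $\ell_n$ and begins with $r_n$.

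Two points, however, are misattributed. First, the conjugates $r_nwr_n^{-1}$ for $w\in\RR_F(\ell_nr_n)$ are \emph{always} words of $A^*$: since $\ell_nr_nw$ ends with $\ell_nr_n$, the word $r_nw$ ends with $r_n$, and the telescoping $r_nw_1\cdots w_mr_n^{-1}=(r_nw_1r_n^{-1})\cdots(r_nw_mr_n^{-1})$ holds in $A^*$ unconditionally. No Fine--Wilf or length bound is needed here, contrary to what your ``Main obstacle'' paragraph suggests. Second, your claim $|w^{(n)}|\ge|r_n|$ is stronger than what non-periodicity delivers and stronger than what you need. Non-periodicity gives only that the minimal return-word length to $\ell_nr_n$ tends to infinity; this suffices because for each fixed $m$, once $|w^{(n)}|\ge\max(|r_m|,|\ell_m|)$ and $n\ge m$, the word $\rho^{(n)}$ has $r_m$ as prefix and $\ell_m$ as suffix, which is all that is required to force $\stackrel{\rightarrow}{\rho}=\stackrel{\rightarrow}{x}$ and $\stackrel{\leftarrow}{\rho}=\stackrel{\leftarrow}{x}$ in the limit. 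The genuine role of non-periodicity is to make each $\rho\in\RR$ an \emph{infinite} pseudoword (hence an element of $J(F)$), which is exactly what fails in Example~\ref{exampleReturnPeriodic2}.
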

Note that Theorem~\ref{theoremPresentationTree} does not hold
without the hypothesis that $F$ is non-periodic (see Example~\ref{exampleReturnPeriodic2}).

This fundamental result is the key to understand the role
played by the group generated by return words. Actually,
let $(\ell_n,r_n,\RR_n)$ be an approximating sequence for $\RR$.
Then, since $\RR_{n+1}\subset \RR_n^*$ by \eqref{eqRn},
we have $\RR^*=\cap_{n\ge 0}\overline{\RR_n^*}$. 
Thus the group $H(x)$ is the intersection of the submonoids $\overline{\RR_n^*}$
and each of them is the closure of the submonoid generated by $\RR_n$.

\begin{example}
Let  $\varphi$ be the Fibonacci morphism and let
$F$ be  the Fibonacci set. The $\HH$-class of the
pseudoword $x=\varphi^\omega(a)$
is a group. Indeed, $H(x)$ contains the
idempotent $\varphi^\omega(a^\omega)$.  The group $H(x)$
is the closure of the semigroup generated by $x$ and $y=\varphi^\omega(ba)$, that
is, isomorphic to $\widehat{F(A)}$.
\end{example}

The following statement is a 
generalization of~\cite[Theorem 6.5]{AlmeidaCosta2015}.
We denote by $G(F)$ the Sch\"utzenberger group of $J(F)$.
\begin{theorem}\label{theoremGene}
Let $F$ be a non-periodic bounded set and let $f:\widehat{A^*}\rightarrow G$
be a continuous morphism from $\widehat{A^*}$ onto a profinite group $G$.
The following conditions are equivalent.
\begin{enumerate}
\item[\rm (i)] The restriction of $f$ to $G(F)$ is surjective.
\item[\rm (ii)] For every $w\in F$, the submonoid $f(\RR_F(w)^*)$
is dense in $G$.
\end{enumerate}
\end{theorem}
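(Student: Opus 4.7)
The plan is to realize $G(F)$ concretely through Theorem~\ref{theoremPresentationTree}: pick $x\in J(F)$ whose $\HH$-class $H(x)$ is a group (which exists because $J(F)$ is regular), so that $G(F)\cong H(x)=\overline{\langle\RR\rangle}$ for some limit return set $\RR$ with approximating sequence $(\ell_n,r_n,\RR_n)$. Then (ii) will be transferred from words of the form $\ell_nr_n$ to an arbitrary $w\in F$ by means of the inclusion $\RR_F(uw)\subset\RR_F(w)^*$ from Equation~\eqref{eqReturnSuf}, together with the fact (noted just after Theorem~\ref{theoremPresentationTree}) that $H(x)\subset\overline{\RR_n^*}$ for every~$n$.

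For (i)$\Rightarrow$(ii), fix $w\in F$ and choose an approximating sequence $(\ell_n,r_n,\RR_n)$ in which each $r_n$ ends with $w$; this is possible because $x$ is uniformly recurrent, so $w$ appears infinitely often as a factor of $\stackrel{\rightarrow}{x}$, yielding a cofinal sequence of prefixes of $x$ with suffix $w$. Since $\ell_nr_n$ ends with $w$, Equation~\eqref{eqReturnSuf} gives $\RR_F(\ell_nr_n)^*\subset\RR_F(w)^*$. From $H(x)\subset\overline{\RR_n^*}$ and (i) one obtains $G=f(G(F))=f(H(x))\subset\overline{f(\RR_n^*)}$, so $f(\RR_n^*)$ is dense in $G$. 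Writing $f(\RR_n^*)=f(r_n)\,f(\RR_F(\ell_nr_n)^*)\,f(r_n)^{-1}$ and using that conjugation by a fixed element is a homeomorphism of $G$, we conclude $f(\RR_F(\ell_nr_n)^*)$ is dense in $G$, hence so is the larger set $f(\RR_F(w)^*)$.

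For (ii)$\Rightarrow$(i), let $V=\overline{f(\langle\RR\rangle)}$, a closed subgroup of the profinite group $G$, and note that $f(G(F))=f(\overline{\langle\RR\rangle})=V$ since continuous images of compact sets are closed. Assume for contradiction $V\neq G$. Since $G$ is profinite, $V$ is contained in some clopen normal subgroup $N$ with $N\neq G$ (closed subgroups of a profinite group are intersections of open normal subgroups of finite index). As $\RR=\lim_n\RR_n$ componentwise and $N$ is open, we have $f(\RR_n)\subset N$ for all large $n$; normality of $N$ then gives $f(\RR_F(\ell_nr_n))=f(r_n)^{-1}f(\RR_n)f(r_n)\subset N$, whence $f(\RR_F(\ell_nr_n)^*)\subset N\subsetneq G$, contradicting (ii) applied to $w=\ell_nr_n$.

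The main obstacle is really the careful manipulation of the approximating sequence: choosing $r_n$ to terminate in $w$ so that the suffix-inclusion from Equation~\eqref{eqReturnSuf} is available, and correctly absorbing the conjugation by $f(r_n)$ in both directions. Everything else is bookkeeping, together with the standard profinite-group fact that a proper closed subgroup is separated from the ambient group by a clopen (normal) finite-index subgroup, which is what lets us pass from the topological statement ``$V\neq G$'' to the combinatorial obstruction on a single finite set $\RR_F(\ell_nr_n)$.
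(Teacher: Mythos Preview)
Your proof is essentially correct and follows the same strategy as the paper's: realize $G(F)$ as $H(x)=\overline{\langle\RR\rangle}$ via Theorem~\ref{theoremPresentationTree}, use the nesting $H(x)\subset\overline{\RR_n^*}$, the inclusion $\RR_F(\ell_nr_n)\subset\RR_F(w)^*$ from Equation~\eqref{eqReturnSuf} when $r_n$ ends in $w$, and absorb the conjugation by $f(r_n)$. The paper argues (i)$\Rightarrow$(ii) by contradiction and (ii)$\Rightarrow$(i) directly via $f(H(x))=\bigcap_n\overline{f(\RR_n^*)}$, whereas you do the reverse; but the substance is identical.

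There is one slip in your (ii)$\Rightarrow$(i): the parenthetical claim that a proper closed subgroup $V$ of a profinite group $G$ is contained in a proper open \emph{normal} subgroup is false in general (take $G=A_5$ and $V$ any nontrivial proper subgroup). What is true is that $V$ is the intersection of the open subgroups containing it, so there is a proper open subgroup $U\supset V$. Drop ``normal'' and observe that $f(r_n)^{-1}Uf(r_n)$ is still a proper open subgroup; hence $f(\RR_F(\ell_nr_n)^*)\subset f(r_n)^{-1}Uf(r_n)\subsetneq G$, which already contradicts~(ii). With this small repair your argument goes through.
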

\begin{proof}
 Let $x\in J(F)$ be such that $H(x)$ is a group. Let
$(\ell_n,r_n)$ be a sequence of pairs
of suffixes and prefixes of $x$ of strictly increasing length.
Taking a subsequence, we may assume
 that $\RR_n=r_n\RR_F(\ell_nr_n)r_n^{-1}$
converges to the limit return set $\RR$. 
Since, by~\eqref{eqRn} we have $\RR_{n+1}\subset \RR_n^*$, the 
semigroup generated by $\RR$ is $\cap_{n\ge 0}\overline{\RR_n^*}$.

(i) implies (ii). Assume by contradiction that $f(\RR_F(w)^*)$ is not
dense in $G$.
Since $w$ is a factor of $x$, we may assume that  $r_0$
ends with $w$.  Then
 $\RR_F(\ell_0r_0)\subset \RR_F(w)^*$. This implies that
$f(\RR_F(\ell_0r_0)^*)$ is not dense in $G$.  Since $f(\RR_0^*)$ is conjugate
to $f(\RR_F(\ell_0r_0)^*)$, the same holds for $f(\RR_0^*)$.
 Thus
$f(\RR^*)$ is not dense in $G$. But by Theorem~\ref{theoremPresentationTree}, $H(x)$
is the closure of the semigroup generated by $\RR$.
We conclude that $f(H(x))$ is not dense in $G$.

(ii) implies (i). Since $H(x)= \RR^*=\cap_{n\ge 0}\overline{\RR_n^*}$,
we have $f(H(x))=\cap_{n\ge 0}\overline{f(\RR_n^*)}$. But $\RR_n$
is conjugate to $\RR(\ell_nr_n)$ and thus by (ii), each $f(\RR_n^*)$
is dense in $G$. Thus $f(H(x))=G$.
\end{proof}

\begin{corollary}\label{corollary1}
Let $F$ be a non-periodic bounded  set on the alphabet $A$. The following conditions are equivalent.
\begin{enumerate}
\item[\rm(i)] The restriction to any maximal
subgroup of $J(F)$ of the natural projection $p_G:\widehat{A^*}\rightarrow \widehat{\F(A)}$
is surjective.
\item[(ii)] For each $w\in F$ the set $\RR_F(w)$ generates the free
group $\F(A)$.
\end{enumerate}
\end{corollary}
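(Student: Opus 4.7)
The plan is to deduce this corollary as a direct application of Theorem~\ref{theoremGene}, taking $G = \widehat{\F(A)}$ and $f = p_G$. For (i), note that maximal subgroups inside a single $\JJ$-class of a compact monoid are pairwise conjugate (in the wide sense: there exist $u,v$ with $uH_1v=H_2$), so their images under $p_G$ are conjugate subgroups of $\widehat{\F(A)}$; hence surjectivity of the restriction of $p_G$ to \emph{any} maximal subgroup of $J(F)$ is equivalent to surjectivity of its restriction to $G(F)$, which is condition (i) of Theorem~\ref{theoremGene}. So the whole content of the corollary is the equivalence between (ii) of Theorem~\ref{theoremGene}, which reads \emph{$p_G(\RR_F(w)^*)$ is dense in $\widehat{\F(A)}$}, and the condition (ii) of the corollary, namely \emph{$\RR_F(w)$ generates $\F(A)$ as a group}.

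I would therefore state and prove the following density/generation lemma: for any finite subset $R$ of $A^*$, the submonoid $R^*$ is dense in $\widehat{\F(A)}$ if and only if $R$ generates $\F(A)$ as a group. Since $F$ is bounded, $\RR_F(w)$ is finite for every $w\in F$, and the lemma applies directly to translate (ii) of Theorem~\ref{theoremGene} into (ii) of the corollary.

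For the easy direction $(\Leftarrow)$ of the lemma, suppose $R$ generates $\F(A)$. Any $g\in \F(A)$ is a word $r_1^{\epsilon_1}\cdots r_k^{\epsilon_k}$ with $r_i\in R$ and $\epsilon_i=\pm1$. In the profinite group $\widehat{\F(A)}$ one has $r^{-1}=\lim_n r^{n!-1}$ (since $x^\omega=1$ in any profinite group), so by continuity of multiplication $g$ is a limit of elements of $R^*$, giving $\F(A)\subset\overline{R^*}$. Since $\F(A)$ is already dense in $\widehat{\F(A)}$, we conclude $\overline{R^*}=\widehat{\F(A)}$.

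The harder direction $(\Rightarrow)$ is where the work lies and where I expect the main obstacle to be: it requires showing that denseness of a monoid-generated set in the profinite completion forces group-generation of $\F(A)$. Let $H$ be the subgroup of $\F(A)$ generated by $R$; it is finitely generated, so by Proposition~\ref{propositionFGClosed} (M.~Hall's theorem) it is closed in $\F(A)$ for the pro-$G$ topology. The closure of $R^*$ inside $\F(A)$ equals $\overline{R^*}\cap \F(A)$, where the closure on the right is taken in $\widehat{\F(A)}$; by hypothesis, this intersection is all of $\F(A)$. Since $R^*\subset H$ and $H$ is closed in $\F(A)$, we get $\F(A)\subset H$, hence $H=\F(A)$. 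Combining the lemma with Theorem~\ref{theoremGene} applied to $p_G$ yields the corollary; the delicate ingredient is precisely Hall's closedness theorem, without which the density condition in the profinite completion would not upgrade to group generation of the finitely generated subgroup $\langle R\rangle$.
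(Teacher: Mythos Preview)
Your proposal is correct and follows essentially the same route as the paper. Both deduce the corollary from Theorem~\ref{theoremGene} with $G=\widehat{\F(A)}$ and $f=p_G$, and both need the equivalence ``$R^*$ is dense in $\widehat{\F(A)}$ $\Leftrightarrow$ $R$ generates $\F(A)$'' to translate condition~(ii); you are simply more explicit than the paper in isolating this lemma and in invoking Hall's theorem (Proposition~\ref{propositionFGClosed}) for its nontrivial direction, which the paper asserts without justification. The only cosmetic difference is that for (i)$\Rightarrow$(ii) the paper argues directly that some maximal subgroup of $J(F)$ sits inside $\overline{\RR_F(w)^*}$, whereas you pass through the equivalence of Theorem~\ref{theoremGene}; both arrive at the same density statement and close with the same Hall-based step.
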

\begin{proof}
We apply Theorem~\ref{theoremGene} with $f$ being the identity.
Let $x\in J(F)$ be such that $H(x)$ is a group.

(i) implies (ii). Let $w\in F$. There is a maximal subgroup
of $J(F)$ contained in the topological closure
of $R_F(w)^*$ in the free profinite monoid
(indeed,  $RR_F(l_nr_n)^*$  is a subset of $\RR_F(w)^*$, for some suitable
sequence of words $l_nr_n$ as defined some pages before, for infinitely
many $n$).

It follows that the topological closure of $R_F(w)^*$ in the free
profinite group generated by $A$ is the whole free profinite group. This
implies that $\RR_F(w)$ generates $\F(A)$.

(ii) implies (i). By Theorem~\ref{theoremGene}, the restriction to $H(x)$
of the projection
$p_G:\widehat{A^*}\rightarrow \widehat{\F(A)}$ is surjective. 
\end{proof}

\subsection{Groups of tree sets}
We now consider uniformly recurrent tree sets. Note that a uniformly
recurrent tree set $F$ is non-periodic. Indeed, if $F$ is the set
of factors of $w^*$ with $w$ primitive, then $\RR_F(w)=\{w\}$ since
$w$ does not overlap nontrivially $w^2$. Thus $F$ is not
a tree set by Theorem~\ref{theoremReturn}.
\begin{theorem}\label{corollary2}
Let $F$ be a uniformly recurrent tree set.  Then the following assertions hold.
\begin{enumerate}
\item The group $G(F)$
is the free profinite group on $A$. More precisely, the restriction to any maximal
subgroup  of $J(F)$ of the natural projection $p_G:\widehat{A^*}\rightarrow \widehat{\F(A)}$
is an isomorphism.
\item Let  $H$ be a subgroup
of finite index $n$ in $\F(A)$. For any maximal group $G$ in $J(F)$,
$G\cap p_G^{-1}(\bar{H})$ is a subgroup of index $n$ of $G$.
\end{enumerate}
\end{theorem}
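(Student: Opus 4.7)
The plan is to reduce assertion (1) to Corollary \ref{corollary1} and the Hopfian property of finitely generated profinite groups (Proposition \ref{propositionHopfian}), and then to deduce assertion (2) from (1) by a direct index computation through the isomorphism so obtained.

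For (1), first note that by Theorem \ref{theoremReturn}, for every $w\in F$ the set $\RR_F(w)$ is a basis of $\F(A)$, so in particular generates $\F(A)$; hence condition (ii) of Corollary \ref{corollary1} holds, and so the restriction of $p_G$ to every maximal subgroup $H(x)$ of $J(F)$ is surjective onto $\widehat{\F(A)}$. To upgrade this surjectivity to an isomorphism, I would build a continuous morphism $\sigma\colon\widehat{\F(A)}\to H(x)$ in the opposite direction as follows. Fix $x\in J(F)$ with $H(x)$ a group, fix a limit return set $\RR$ to $x$, choose any map $A\to H(x)$ whose image is $\RR$, and extend by the universal property of $\widehat{\F(A)}$ (permissible since $H(x)$ is a profinite group). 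By Theorem \ref{theoremPresentationTree}, $H(x)$ is the closure of the subsemigroup generated by $\RR$; since $H(x)$ is a group this is also the closed subgroup generated by $\RR$, so $\sigma$ is surjective. The composition $p_G\circ\sigma\colon\widehat{\F(A)}\to\widehat{\F(A)}$ is then a continuous surjective endomorphism of the finitely generated profinite group $\widehat{\F(A)}$, and by Proposition \ref{propositionHopfian} it is an isomorphism. This forces both $\sigma$ and $p_G|_{H(x)}$ to be isomorphisms, proving (1).

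For (2), fix a maximal group $G$ in $J(F)$; by (1) the restriction $p_G|_G\colon G\to\widehat{\F(A)}$ is an isomorphism. It suffices to show that a subgroup $H$ of $\F(A)$ of finite index $n$ has closure $\bar H$ of index exactly $n$ in $\widehat{\F(A)}$. This is standard: by passing to the normal core one reduces to the case where $H$ is normal, and then the quotient map $\F(A)\to\F(A)/H$ onto a finite group extends uniquely to a continuous surjection $\widehat{\F(A)}\to\F(A)/H$ with kernel $\bar H$, so $[\widehat{\F(A)}:\bar H]=n$. The set $G\cap p_G^{-1}(\bar H)$ is then the image of $\bar H$ under the inverse isomorphism $(p_G|_G)^{-1}$, hence a subgroup of index $n$ in $G$.

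The main obstacle is the Hopfian step in (1): one has to check carefully that the composition $p_G\circ\sigma$ is literally an endomorphism of $\widehat{\F(A)}$ (not just a morphism to some other finitely generated profinite group), so that Proposition \ref{propositionHopfian} genuinely applies. Once this is done, the rest is just assembly of results already in the paper: Theorem \ref{theoremReturn} supplies the generation statement, Corollary \ref{corollary1} supplies surjectivity, Theorem \ref{theoremPresentationTree} provides the reverse morphism, and the Hopfian principle closes the loop.
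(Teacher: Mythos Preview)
Your proof is correct and follows essentially the same route as the paper: for (1), invoke Theorem~\ref{theoremReturn} to verify condition (ii) of Corollary~\ref{corollary1}, obtain a reverse surjection $\widehat{\F(A)}\to H(x)$ from the fact that $H(x)$ is topologically generated by the limit return set (the paper phrases this simply as ``$H(x)$ is the closure of a semigroup generated by $\Card(A)$ elements''), and close the loop with Proposition~\ref{propositionHopfian}; for (2), both you and the paper just pull $\bar H$ back through the isomorphism $p_G|_G$. The only point you leave implicit that the paper states explicitly is that a uniformly recurrent tree set is non-periodic (needed for Theorem~\ref{theoremPresentationTree} and Corollary~\ref{corollary1}), but this is noted just before the theorem.
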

\begin{proof}
1.
This results directly from Corollary~\ref{corollary1} since by
Theorem~\ref{theoremReturn}, $\RR_F(w)$ is a basis
of $\F(A)$ for every $w\in F$ when $F$ is a uniformly recurrent tree set.
Since $H(x)$ is the closure
of a semigroup generated by  $\Card(A)$ 
elements, there is a continuous morphism $\psi$ from $\widehat{\F(A)}$
onto $H(x)$. Thus $p_G \circ \psi$ is continuous surjective morphism
from $\widehat{\F(A)}$ onto itself. By Proposition~\ref{propositionHopfian},
it implies that it is an isomorphism. This proves the first
assertion.

2. This results from Corollary~\ref{corollary1} since the restriction 
$\alpha$ of
$p_G$ to $G$ is an isomorphism 
from $G$ onto $\widehat{\F(A)}$ and $G\cap p_G^{-1}(\bar{H})=\alpha^{-1}(\bar{H})$.
\end{proof}
\begin{example}
Let $F$ be the Fibonacci set. We have seen that $G(F)$ is
the free profinite group on $A$ (Example~\ref{exampleGroupFibonacci}).
\end{example}

\subsection{Groups of fixed points of morphisms}
Let $\varphi:A^*\rightarrow A^*$ be a primitive substitution and let $F(\varphi)$
be the set of factors of a fixed point of $\varphi$. We denote by $J(\varphi)$
the closure of $F(\varphi)$ and by $G(\varphi)$ the Sch\"utzenberger group
of $J(\varphi)$.

A \emph{connexion}\index{connexion}\index{morphism!connexion} for $\varphi$ is 
a word $ba$ with $b,a\in A$ such that $ba\in F(\varphi)$, the first
letter of $\varphi^\omega(a)$ is $a$ and the last letter of $\varphi^\omega(b)$
is $b$. Every primitive substitution has a connexion~\cite[Lemma 4.1]{Almeida2005b}. A \emph{connective power} of $\varphi$ is a finite power $\tilde{\varphi}$ of $\varphi$ such that the first letter of 
$\tilde{\varphi}(a)$ is $a$, the last letter of $\tilde{\varphi}(b)$ is $b$.
We denote $X_\varphi(a,b)=aR_F(ba)a^{-1}$. The set $X_\varphi(a,b)$ is a code.

\begin{example}\label{exampleThueMorse3}
Let $\tau:a\mapsto ab,b\mapsto ba$ be the Thue-Morse morphism.
The word $aa$ is a connection for $\tau$ and $\tilde{\tau}=\tau^2$ is a connecting power of $\tau$.
The set $X=X_\tau(a,a)$ has four elements $x=abba$, $y=ababba$, $z=abbaba$ and $t=ababbaba$.
\end{example}
The following is~\cite[Theorem 5.6]{AlmeidaCosta2013}.
\begin{theorem}\label{theorem5.6}
Let $\varphi$ be a non periodic primitive substitution. Consider a connexion $ba$ for $\varphi$ and a
connective power $\tilde{\varphi}$. The intersection
$H_{ba}$ of the $\RR$-class of  $\varphi^\omega(a)$ with the
$\LL$-class of $\varphi^\omega(b)$
is a group and  $H_{ba}=\tilde{\varphi}(H_{ba})=\varphi^\omega(H_{ba})$.
\end{theorem}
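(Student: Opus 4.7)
The plan is to split the proof into three steps: identifying $H_{ba}$ and showing it is a group (via a bi-infinite factor criterion), checking the forward inclusions $\tilde{\varphi}(H_{ba})\subseteq H_{ba}$ and $\varphi^\omega(H_{ba})\subseteq H_{ba}$ (by continuity), and finally establishing the reverse inclusions. Write $\zeta_a=\stackrel{\rightarrow}{\varphi^\omega(a)}$ and $\zeta_b=\stackrel{\leftarrow}{\varphi^\omega(b)}$. Since $\tilde{\varphi}(a)$ starts with $a$ and $\tilde{\varphi}(b)$ ends with $b$, the finite words $\tilde{\varphi}^n(a)$ form a strictly increasing sequence of prefixes of $\zeta_a$ and $\tilde{\varphi}^n(b)$ a strictly increasing sequence of suffixes of $\zeta_b$; thus $\zeta_a$ is the right-infinite fixed point of $\tilde{\varphi}$ starting with $a$ and $\zeta_b$ the left-infinite fixed point ending with $b$. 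By the primitivity of $\varphi$, the pseudowords $\varphi^\omega(a)$ and $\varphi^\omega(b)$ are uniformly recurrent and $\JJ$-equivalent, so $H_{ba}$ is a non-empty $\HH$-class consisting of those $w\in J(\varphi)$ with $\stackrel{\rightarrow}{w}=\zeta_a$ and $\stackrel{\leftarrow}{w}=\zeta_b$.

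To show that $H_{ba}$ is a group, I apply the criterion recalled in Section~\ref{factorialSets}: an element $w\in J(F)$ lies in a subgroup if and only if $\stackrel{\leftarrow}{w}\cdot\stackrel{\rightarrow}{w}$ belongs to $X(F)$. For $w\in H_{ba}$ this two-sided infinite word is $y=\zeta_b\cdot\zeta_a$, centered on the connexion $ba$; a finite factor of $y$ either lies entirely in $\zeta_b$ or in $\zeta_a$ (hence in $F(\varphi)$), or it crosses the center and is contained in $\tilde{\varphi}^k(b)\tilde{\varphi}^k(a)=\tilde{\varphi}^k(ba)\in F(\varphi)$ for some $k$, using $ba\in F(\varphi)$ and the $\tilde{\varphi}$-invariance of $F(\varphi)$. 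Hence $H_{ba}$ is a group. The forward inclusions are then immediate from continuity of $\tilde{\varphi}$: for $w\in H_{ba}$, $\stackrel{\rightarrow}{\tilde{\varphi}(w)}=\tilde{\varphi}(\zeta_a)=\zeta_a$ and $\stackrel{\leftarrow}{\tilde{\varphi}(w)}=\tilde{\varphi}(\zeta_b)=\zeta_b$, so $\tilde{\varphi}(w)\in H_{ba}$; iterating yields $\varphi^\omega(H_{ba})\subseteq H_{ba}$.

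For the reverse inclusions, I first reduce to showing $\varphi^\omega(H_{ba})=H_{ba}$: factoring $\tilde{\varphi}^\omega=\tilde{\varphi}\cdot\psi$ in the closure of the cyclic semigroup generated by $\tilde{\varphi}$ in $\End(\widehat{A^*})$ (where $\psi$ is obtained as an accumulation point of $\tilde{\varphi}^{n_k-1}$ for some sequence with $\tilde{\varphi}^{n_k}\to\tilde{\varphi}^\omega$), continuity gives $\psi(H_{ba})\subseteq H_{ba}$, whence $H_{ba}=\varphi^\omega(H_{ba})=\tilde{\varphi}(\psi(H_{ba}))\subseteq\tilde{\varphi}(H_{ba})$. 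Since $\varphi^\omega$ is idempotent, $\varphi^\omega(H_{ba})=H_{ba}$ amounts to showing that every element of $H_{ba}$ lies in the closed image $\tilde{\varphi}^\omega(\widehat{A^*})$, on which $\varphi^\omega$ acts as the identity.

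My plan for this last point is to build an approximating sequence for a limit return set that is compatible with $\tilde{\varphi}$. Fix $x\in H_{ba}$ and take $r_n=\tilde{\varphi}^n(a)$, $\ell_n=\tilde{\varphi}^n(b)$, which are strictly increasing prefixes and suffixes of $x$, with $\ell_n r_n=\tilde{\varphi}^n(ba)\in F(\varphi)$. The image of a right return word $r\in\RR_F(\tilde{\varphi}^{n-1}(ba))$ under $\tilde{\varphi}$ factors as a product of return words in $\RR_F(\tilde{\varphi}^n(ba))$, giving inclusions $\tilde{\varphi}(\RR_{n-1})\subseteq\RR_n^*$ compatible with the conjugation by $\tilde{\varphi}^n(a)$ built into the definition of $\RR_n$. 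Taking the $\omega$-limit, each element of the resulting limit return set $\RR$ is a profinite limit of $\tilde{\varphi}^n$-images and therefore lies in $\tilde{\varphi}^\omega(\widehat{A^*})$; by Theorem~\ref{theoremPresentationTree} the closure of the semigroup generated by $\RR$ is $H_{ba}$, so $H_{ba}\subseteq\tilde{\varphi}^\omega(\widehat{A^*})$ and $\varphi^\omega|_{H_{ba}}=\mathrm{id}_{H_{ba}}$. The main obstacle is the precise analysis of this return-word factorization: verifying that $\tilde{\varphi}(r)$ decomposes into a bounded number of return words at the next level, where the primitivity of $\varphi$ and the connective-power property are used crucially to control occurrences of $\tilde{\varphi}^n(ba)$ in $\tilde{\varphi}(\tilde{\varphi}^{n-1}(ba)\cdot r)$.
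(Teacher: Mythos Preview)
Your overall architecture is sound: identifying $H_{ba}$ with the bi-infinite word $\zeta_b\cdot\zeta_a$, the forward inclusions by continuity, and the reduction of $\tilde\varphi(H_{ba})=H_{ba}$ to $\varphi^\omega(H_{ba})=H_{ba}$ are all correct. The problem is in the last step, where you try to show $H_{ba}\subseteq\tilde\varphi^\omega(\widehat{A^*})$ via limit return sets.

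The inclusion you establish, $\tilde\varphi(\RR_{n-1})\subseteq\RR_n^*$, points the wrong way. It says that $\tilde\varphi$-images of level-$(n-1)$ return words decompose into level-$n$ return words; it does \emph{not} say that every level-$n$ return word is a $\tilde\varphi$-image. But this reverse containment, namely $\RR_n\subseteq\tilde\varphi(\RR_{n-1})$ for all large $n$, is exactly what is needed to conclude that each element of $\RR=\lim\RR_n$ is a limit of iterated $\tilde\varphi$-images and hence lies in $\tilde\varphi^\omega(\widehat{A^*})$. Without it, there is no reason the limit elements should be fixed by $\varphi^\omega$. Concretely: from $\tilde\varphi(\RR_{n-1})\subseteq\RR_n^*$ you only get $\tilde\varphi^n(\RR_0)\subseteq\RR_n^*$, which gives information about images of $\RR_0$, not about the elements of $\RR_n$ themselves.

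This is precisely the point where recognizability of the substitution enters, and the paper singles it out as the crux of the argument: by Moss\'e's theorem a non-periodic primitive substitution is recognizable, so for $n$ large every occurrence of $\tilde\varphi^n(ba)$ in a word of $F(\varphi)$ is the $\tilde\varphi$-image of an occurrence of $\tilde\varphi^{n-1}(ba)$. That is what forces every return word to $\tilde\varphi^n(ba)$ to be $\tilde\varphi$ of a return word to $\tilde\varphi^{n-1}(ba)$, giving $\RR_n=\tilde\varphi(\RR_{n-1})$ and hence the desired conclusion. The ``main obstacle'' you name at the end (bounding how many pieces $\tilde\varphi(r)$ splits into) is a red herring: it concerns the forward decomposition, which is easy, not the backward surjectivity, which is the actual content.
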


The proof uses the notion of recognizablity of a substitution. We give the definition
in the following form (see~\cite{KloudaStarosta2014} for the equivalence with equivalent
forms). Given a morphism $\varphi:A^*\rightarrow A^*$, a pair $(q,r)$ of words in $A^*$
is \emph{synchronizing}\index{synchronizing!pair} if for any $p,s,t\in A^*$ such that $\varphi(t)=pqrs$,
one has $t=uv$ with $\varphi(u)=pq$ and $\varphi(v)=rs$ (see Figure~\ref{figureRecognizability}).
\begin{figure}[hbt]
\centering\gasset{Nadjust=wh,AHnb=0}
\begin{picture}(60,15)(0,-2)
\node(ul)(15,10){}\node(ur)(30,10){}\node(vr)(45,10){}
\node(pl)(0,0){}\node(ql)(10,0){}\node(rl)(30,0){}\node(sl)(50,0){}\node(sr)(60,0){}

\drawedge(ul,ur){$u$}\drawedge(ur,vr){$v$}
\drawedge[ELside=r](pl,ql){$p$}\drawedge[ELside=r](ql,rl){$q$}\drawedge[ELside=r](rl,sl){$r$}\drawedge[ELside=r](sl,sr){$s$}
\gasset{AHnb=1}
\drawedge[ELside=r](ul,pl){$\varphi$}\drawedge(ur,rl){}\drawedge(vr,sr){$\varphi$}
\end{picture}
\caption{A synchronizing pair.}\label{figureRecognizability}
\end{figure}
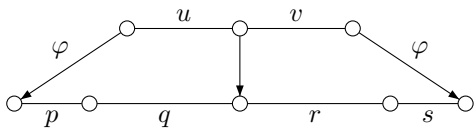
Let $F$ be the set of factors of a fixed point of $\varphi$.
The morphism $\varphi$ is \emph{recognizable}\index{recognizable!morphism} if there is an integer $n\ge 1$ such that
for any $x,y\in F\cap A^n$ such that $xy\in F$, the pair $(\varphi(x),\varphi(y))$
is synchronizing. By a result of Moss\'e~\cite{Mosse1992}, any non-periodic primitive substitution
is recognizable (see~\cite{KloudaStarosta2014} for a new version of the proof).

The following is the main result if~\cite{AlmeidaCosta2013} (Theorem 6.2).
\begin{theorem}\label{theoremPresentation}
Let $\varphi$ be a non-periodic primitive substitution over the alphabet $A$. Let $ba$ be a connexion
of $\varphi$ and let $X_\varphi=X_\varphi(a,b)$. Then $G(\varphi)$ admits the presentation
\begin{displaymath}
\langle X\mid \tilde{\varphi}_{X,G}^\omega(x)=x, x\in X\rangle_ G.
\end{displaymath}
\end{theorem}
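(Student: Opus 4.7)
The plan is to realize the Schützenberger group $G(\varphi)$ concretely as the subgroup $H_{ba}$ from Theorem~\ref{theorem5.6}, show that $X$ topologically generates it, and then read off the presentation by a diagram-chase modelled on Proposition~\ref{lemma2.2} (in its group version).

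\emph{Step 1: the ambient group.} Let $H=H_{ba}$. Theorem~\ref{theorem5.6} gives $\tilde\varphi(H)=H$, so $\tilde\varphi$ restricts to a continuous automorphism of $H$. Since $\varphi$ is primitive, $F(\varphi)$ is bounded, so a limit return set to any $x\in J(\varphi)$ with $H(x)$ a group is finite. By Theorem~\ref{theoremPresentationTree}, $H$ is therefore a finitely generated profinite group. Hence, by Theorem~\ref{theoremEndomorphismMonoid}, $\mathrm{End}(H)$ is a profinite monoid; the automorphism $\tilde\varphi|_H$ lies in its group of units, so $(\tilde\varphi|_H)^{\omega}$ is an idempotent of that group, and therefore equals $\mathrm{id}_H$.

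\emph{Step 2: identification of a limit return set with iterates of $X$.} Set $\ell_n=\tilde\varphi^n(b)$ and $r_n=\tilde\varphi^n(a)$; by the connexion assumption these are strictly increasing suffixes of $\varphi^\omega(b)$ and prefixes of $\varphi^\omega(a)$ respectively, and $\ell_n r_n=\tilde\varphi^n(ba)$. The key input here is Mossé's recognizability theorem: for $n$ large enough, the $ba$-occurrences in $\tilde\varphi^n(w)$ (for $w\in F$) are exactly the images of the $ba$-occurrences of $w$, so the right return words in $F$ to $\tilde\varphi^n(ba)$ coincide with $\tilde\varphi^n(\RR_F(ba))$. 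Conjugating by $r_n$ yields
\[
\RR_n \;=\; r_n\,\RR_F(\ell_nr_n)\,r_n^{-1}
\;=\;\tilde\varphi^n(a)\,\tilde\varphi^n(\RR_F(ba))\,\tilde\varphi^n(a)^{-1}
\;=\;\tilde\varphi^n(X).
\]
Passing to the subsequence $n=k!$ and using continuity of the $\omega$-power in $\mathrm{End}(\widehat{A^*})$, the limit return set is $\RR=\tilde\varphi^\omega(X)$. By Theorem~\ref{theoremPresentationTree}, $H$ is the closure of the semigroup generated by $\RR=\tilde\varphi^\omega(X)$; since $\tilde\varphi^\omega|_H=\mathrm{id}_H$ by Step~1, this is the same as the closed subgroup generated by the image of $X$ in $H$. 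So $X$ topologically generates $H$.

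\emph{Step 3: the presentation via a diagram chase.} Using recognizability once more (and the fact that an element of $X^*$ is characterised inside $A^*$ by a bracketing into pieces starting with $a$ and separated by $ba$-occurrences), one checks $\tilde\varphi(X^*)\subset X^*$. Hence $\tilde\varphi$ induces an endomorphism of $X^*$ which extends continuously to an endomorphism $\tilde\varphi_{X,G}$ of $\widehat{\F(X)}$. The continuous morphism $\pi:\widehat{\F(X)}\to H$ extending $x\mapsto x\in H$ is surjective by Step~2, and by construction satisfies $\pi\circ\tilde\varphi_{X,G}=(\tilde\varphi|_H)\circ\pi$. Applying the group-theoretic analogue of Proposition~\ref{lemma2.2} to the automorphism $\tilde\varphi|_H$ of the profinite group $H$ then yields
\[
G(\varphi)\;=\;H\;=\;\langle X\mid \tilde\varphi_{X,G}^\omega(x)=x,\; x\in X\rangle_G.
\]

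\emph{Main obstacle.} The real work is in the two appeals to Mossé's recognizability theorem: identifying $\RR_F(\tilde\varphi^n(ba))$ with $\tilde\varphi^n(\RR_F(ba))$ so that the limit return set becomes $\tilde\varphi^\omega(X)$, and verifying $\tilde\varphi(X^*)\subset X^*$ so that $\tilde\varphi_{X,G}$ is well-defined. A secondary subtlety is matching $\ell_n$ and $r_n$ to the specific $\LL$-class of $\varphi^\omega(b)$ and $\RR$-class of $\varphi^\omega(a)$ that single out $H=H_{ba}$ among the maximal subgroups of $J(\varphi)$, so that the limit return set really produces $H$ and not another maximal subgroup.
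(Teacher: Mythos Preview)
The paper does not actually supply a proof of Theorem~\ref{theoremPresentation}: it is quoted as the main result of \cite{AlmeidaCosta2013} and only illustrated by Example~\ref{exampleThueMorse4}. What the paper does provide are the ingredients that the original proof uses---Theorem~\ref{theorem5.6} (the $\tilde\varphi$-invariance of $H_{ba}$), Moss\'e's recognizability (introduced immediately before the statement), and the presentation lemma Proposition~\ref{lemma2.2}. Your sketch assembles exactly these ingredients in the expected way: identify $G(\varphi)$ with $H_{ba}$, use recognizability to rewrite the approximating return sets as $\tilde\varphi^n(X)$ and thus obtain $\RR=\tilde\varphi^\omega(X)$, conclude via Theorem~\ref{theoremPresentationTree} that $X$ topologically generates $H_{ba}$, and finish with a diagram chase \`a la Proposition~\ref{lemma2.2}. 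This is the route taken in \cite{AlmeidaCosta2013}.

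Two small points are worth flagging. First, to invoke Theorem~\ref{theoremPresentationTree} you need $F(\varphi)$ to be bounded; this is true (primitive substitutive subshifts are linearly recurrent, by Durand), but it is not proved in this survey, so you should cite it rather than assert it. Second, you correctly isolate the two recognizability claims as the real work, but note that the statement of recognizability given in the paper (synchronizing pairs for factors of length $n$) does not immediately say that occurrences of $\tilde\varphi^k(ba)$ in $F$ are exactly the $\tilde\varphi^k$-images of occurrences of $ba$: one has to iterate and desubstitute carefully, and in particular $ba$ may be shorter than the recognizability constant, so the argument really gives $\RR_F(\tilde\varphi^{k}(ba))=\tilde\varphi^{k-k_0}(\RR_F(\tilde\varphi^{k_0}(ba)))$ for some fixed $k_0$; this does not affect the limit $\RR=\tilde\varphi^\omega(X)$ but is worth spelling out. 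Your handling of the Hopfian step (deducing $(\tilde\varphi|_H)^\omega=\mathrm{id}_H$ from surjectivity plus finite generation) is fine and matches Proposition~\ref{propositionHopfian}.
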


\begin{example}\label{exampleThueMorse4}
Let $\tau:a\mapsto ab,b\mapsto ba$ be the Thue-Morse morphism. We have seen in Example~\ref{exampleThueMorse3} that
 the word $aa$ is a connection for $\tau$ and $\tilde{\tau}=\tau^2$ is a connecting power of $\tau$.
The set $X=X_\tau(a,a)$ has four elements $x=abba$, $y=ababba$, $z=abbaba$ and $t=ababbaba$.
By Theorem~\ref{theoremPresentation}, the group $G(\tau)$ is the group generated by
$X$ with the relations $\tau_{X,G}^\omega(u)=u$ for $u\in X$. Actually, since
$\tau^\omega(y)\tau^{\omega-1}(x)\tau^\omega(z)=\tau^\omega(t)$, the relation $xy^{-1}z=t$ is a consequence
of the relations above and thus $G(\varphi)$ is generated by $x,y,z$.
\end{example}

Let $f:A^*\rightarrow G$ be a morphism from $A^*$ into a finite group $G$
and let $\varphi:A^*\rightarrow A^*$ be a morphism. We denote by
$\varphi_G$ the map from $G^A$ into itself defined as follows.
Consider $h\in G^A$. We may naturally extend $h$ to a map from $A^*$
into $G$.
For $a\in A$, we define the image of $a$ by $\varphi_G(h)$ as
$\varphi_G(h)(a)=h(\varphi(a))$. We say that $\varphi$ has \emph{finite
$f$-order} if there is an integer $n\ge 1$ such that $\varphi^n_G(f)=f$.
The least such integer is called the $f$-\emph{order}\index{substitution!order of} of $\varphi$.

Any substitution $\varphi$ which is invertible in $\F(A)$ is of finite
$h$-order for any morphism $h$ into a finite group. Indeed, since $G$
is finite, there are integers $n,m$ with $n<m$ such that
$\varphi_G^{n+m}=\varphi^n_G$. Since $\varphi$ is invertible,  $\varphi_G^m$
is the identity.

\begin{example}
Let $\varphi:a\mapsto ab,b\mapsto a$ be the Fibonacci substitution and
let $h:A^*\rightarrow \Z/2\Z$ be the parity of the length, that is the morphism into the additive goup
of integers modulo $2$ sending each letter to $1$. Then 
$\varphi$ is of $h$-order $3$.
\end{example}

The following is a consequence of Theorem~\ref{theoremPresentation},
using~\cite[Proposition 3.2]{AlmeidaCosta2013}.
\begin{corollary}\label{corollaryPresentation}
Let $\varphi$ be a non-periodic primitive substitution over $A$ and let $h:A^*\rightarrow G$
be a morphism onto a finite group.
The restriction of $\hat{h}:\widehat{A^*}\rightarrow G$ to any maximal
subgroup of $J(\varphi)$ is surjective if and only if $\varphi$
has finite $h$-order.
\end{corollary}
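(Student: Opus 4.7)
The plan is to combine Theorem~\ref{theoremPresentation} with Proposition~3.2 of~\cite{AlmeidaCosta2013}. After choosing a connexion $ba$ of $\varphi$ and a connective power $\tilde\varphi$, Theorem~\ref{theoremPresentation} identifies $G(\varphi)$ with the maximal subgroup $H_{ba}$ of $J(\varphi)$ and equips it with the profinite presentation $\langle X \mid \tilde\varphi_{X,G}^\omega(x)=x, x\in X\rangle_G$, where $X=X_\varphi(a,b)$. Fixing any maximal subgroup of $J(\varphi)$, we may assume it is $H_{ba}$, since the Sch\"utzenberger groups of distinct $\HH$-classes inside the same $\JJ$-class are continuously isomorphic.

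First I would observe that, because $\hat h:\widehat{A^*}\to G$ is a continuous morphism and $H_{ba}\subseteq\widehat{A^*}$, the restriction $\hat h|_{H_{ba}}$ is automatically a continuous morphism of profinite groups. Through the universal property of the presentation it corresponds to the assignment $x\mapsto h(x)$ for $x\in X$, and its image is the closed subgroup topologically generated by $h(X)$; since $G$ is finite, this is simply the subgroup $\langle h(X)\rangle$. Therefore $\hat h|_{H_{ba}}$ is surjective if and only if $h(X)$ generates $G$.

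The second step is to establish the equivalence between the generation of $G$ by $h(X)$ and the condition that $\varphi$ have finite $h$-order. This is the content of Proposition~3.2 of~\cite{AlmeidaCosta2013}. In one direction, if $h\circ\varphi^n=h$, then $h\circ\tilde\varphi^{m!}=h$ for large enough $m$ (since $\tilde\varphi$ is a finite power of $\varphi$), so by continuity $h\circ\tilde\varphi_{X,G}^\omega=h$ on the generators $X$; the defining relations of the presentation are then preserved by $h$, and the cited proposition yields that $h(X)$ generates $G$. Conversely, surjectivity of $\hat h|_{H_{ba}}$, combined with the defining relations and finiteness of $G$, forces $h\circ\tilde\varphi^k=h$ for some $k\ge 1$, and Proposition~3.2 allows one to transfer this equality from the return-word code $X$ to the whole alphabet $A$, producing an exponent $n$ with $h\circ\varphi^n=h$.

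The main obstacle is precisely this last transfer. Since $X\subsetneq A^+$, the equality $h\circ\tilde\varphi^k=h$ on $X$ does not immediately give $h\circ\varphi^n=h$ on $A$. The proof of~\cite[Proposition~3.2]{AlmeidaCosta2013} resolves this by exploiting the connexion structure of $\tilde\varphi$: recognizability (Moss\'e's theorem) implies that every sufficiently long word in $F(\varphi)$ has a unique factorization into return words to $ba$ surrounded by fixed prefix and suffix, which allows invariance of $h$ along $X$ under iteration of $\tilde\varphi$ to be propagated to invariance on all of $A^*$ after finitely many iterations. This is the technical heart of the argument, and the rest is a direct consequence of the presentation from Theorem~\ref{theoremPresentation}.
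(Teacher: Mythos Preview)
The paper's own proof is only the one-line pointer to Theorem~\ref{theoremPresentation} and \cite[Proposition~3.2]{AlmeidaCosta2013}, so at the level of strategy your plan matches. The gap is in how you glue the two ingredients.

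In your second paragraph you assert that, via the presentation isomorphism, $\hat h|_{H_{ba}}$ corresponds to the assignment $x\mapsto h(x)$ and hence has image $\langle h(X)\rangle$. This is not justified. The presentation $\langle X\mid \tilde\varphi_{X,G}^\omega(x)=x\rangle_G$ is abstract: the surjection $\widehat{\F(X)}\to H_{ba}$ sends the generator $x$ not to the finite word $x\in A^+$ (which does not even lie in $J(\varphi)$, let alone in $H_{ba}$) but to a genuine infinite pseudoword, essentially $\tilde\varphi^\omega(x)$. Consequently the image of $\hat h|_{H_{ba}}$ is generated by the values $\hat h(\tilde\varphi^\omega(x))=\lim_n h(\tilde\varphi^{n!}(x))$, and these coincide with $h(x)$ precisely when $h$ is periodic under the action of $\tilde\varphi_G$---which is the finite $h$-order condition you are trying to establish. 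So the claim ``its image is the subgroup $\langle h(X)\rangle$'' is circular.

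Your third paragraph compounds this by treating ``the defining relations are preserved by $h$'' as a step needed for $\hat h|_{H_{ba}}$ to exist. It is not: the restriction of a continuous morphism is always a morphism; the only question is its image. What \cite[Proposition~3.2]{AlmeidaCosta2013} actually provides is the computation of that image for groups presented by relations of the form $\psi^\omega(x)=x$, in terms of the dynamics of the induced endomorphism on the finite set $G^X$ (or $G^A$). Your final paragraph's invocation of recognizability and Moss\'e's theorem is speculative: that machinery goes into the proofs of Theorems~\ref{theorem5.6} and~\ref{theoremPresentation}, not into Proposition~3.2, and it does not furnish the transfer from $X$ to $A$ in the way you describe.
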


\begin{example}
Let $\varphi$ be as in Example~\ref{exampleMorphismAC4}, let $G=\Z/2\Z$ and let $h:A^*\rightarrow G$ be the
parity of the length. Then $\varphi_G(h)=(0,0)$ and $\varphi_G(0,0)=(0,0)$.
Thus $\varphi$ does not have finite $h$-order. Actually, 
any pseudoword in $J(\varphi)$ which is in the image of $\hat{\varphi}$
has even length and thus is mapped by $h$ to $0$. 
Thus, by Theorem~\ref{theorem5.6}, there is a maximal group $G$ in $J(F)$
which contains only pseudowords of even length and therefore
$\hat{h}(G)=\{0\}$, showing that the restriction of $\hat{h}$ to 
$G$
 is not surjective.
\end{example}
The following example is from~\cite[Section 7.2]{AlmeidaCosta2013}.
\begin{example}\label{exampleMorphismAC5}
Let $\varphi:a\mapsto ab,b\mapsto a^3b$ be as in the previous example and let
$h:A^*\rightarrow A_5$ be the morphism from $A^*$ onto the alternating group $A_5$
defined by $h:a\mapsto (123),b\mapsto (345)$. One may verify that $\varphi$
has $h$-order $12$. Thus $A_5$ is a quotient of $G(\varphi)$. It is not
known whether any finite group is a quotient of $G(\varphi)$.
\end{example}
\paragraph{Proper substitutions}
A substitution $\varphi$ over $A$ is \emph{proper}\index{substitution!proper} if there are letters
$a,b\in A$ such that for every $d\in A$, $\varphi(d)$ starts with $a$
and ends with $b$. Theorem~\ref{theoremPresentation} takes
a simpler form for proper substitutions.
The following is~\cite[Theorem 6.4]{AlmeidaCosta2013}.
\begin{theorem}\label{theoremPresentationProper}
Let $\varphi$ be  a non-periodic proper primitive substitution over
a finite alphabet $A$. Then $G(\varphi)$ admits the presentation
\begin{displaymath}
\langle A\mid \varphi_G^\omega(a)=a, a\in A\rangle_ G.
\end{displaymath}
\end{theorem}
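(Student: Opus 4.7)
The plan is to reduce Theorem~\ref{theoremPresentationProper} to the general Theorem~\ref{theoremPresentation} by explicitly identifying the code $X_\varphi(a,b)$ with the alphabet $A$. Pick letters $a,b\in A$ such that every $\varphi(d)$ starts with $a$ and ends with $b$. The word $ba$ is then a connexion: it lies in $F(\varphi)$ because it appears at every boundary of the factorization $\varphi^\omega(a)=\varphi(d_1)\varphi(d_2)\cdots$; the first letter of $\varphi^\omega(a)$ is $a$ because $\varphi(a)$ begins with $a$ and iteration preserves this, and dually for $b$. Moreover $\varphi$ itself is a connective power, so one may take $\tilde\varphi=\varphi$.

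Writing $\varphi(d)=au_db$ for each $d\in A$, the next step is to establish
\[
\RR_F(ba) = \{u_d\, ba \mid d\in A\},
\]
so that $X_\varphi(a,b)=a\,\RR_F(ba)\,a^{-1}=\{a u_d b \mid d\in A\}=\varphi(A)$. This is the main technical point: it requires ruling out any occurrence of $ba$ inside or straddling the blocks $\varphi(d)$ in $\varphi^\omega$. This is precisely what the recognizability of proper primitive substitutions (Moss\'e's theorem, as cited before Theorem~\ref{theoremPresentation}) guarantees: block boundaries in $\varphi^\omega$ are synchronized, so the occurrences of $ba$ coincide exactly with the junctions between consecutive blocks.

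The map $\iota:d\mapsto\varphi(d)$ is a bijection from $A$ onto $X_\varphi(a,b)$ and, since $X_\varphi(a,b)$ is a code (Theorem~\ref{theoremCodeProfinite}), extends to a continuous isomorphism $\hat\iota:\widehat{\F(A)}\to\widehat{\F(X_\varphi(a,b))}$. Under $\hat\iota$, the endomorphism $\varphi_{X,G}$ of $\widehat{\F(X_\varphi(a,b))}$ corresponds to $\varphi_G$ on $\widehat{\F(A)}$: indeed, if $x=\varphi(d)$ with $\varphi(d)=d_1\cdots d_n$, then $\varphi_{X,G}(x)$ is $\varphi^2(d)=\varphi(d_1)\cdots\varphi(d_n)$ rewritten as an element of $X_\varphi(a,b)^*$, and pulling back via $\hat\iota^{-1}$ yields the word $d_1\cdots d_n=\varphi_G(d)$.

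Applying Theorem~\ref{theoremPresentation} with $X=X_\varphi(a,b)$ and $\tilde\varphi=\varphi$ then gives $G(\varphi)=\langle X_\varphi(a,b)\mid \varphi_{X,G}^\omega(x)=x,\ x\in X_\varphi(a,b)\rangle_G$; transporting through $\hat\iota$ produces the claimed presentation $\langle A\mid \varphi_G^\omega(a)=a,\ a\in A\rangle_G$. The main obstacle is the recognizability argument in the second step; once $X_\varphi(a,b)=\varphi(A)$ is in hand, the rest is a clean identification of two endomorphisms via the canonical bijection $\iota$.
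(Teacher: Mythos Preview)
Your reduction to Theorem~\ref{theoremPresentation} hinges on the identity $X_\varphi(a,b)=\varphi(A)$, equivalently $\RR_F(ba)=\{u_d\,ba\mid d\in A\}$, and this is false in general. Properness guarantees that $ba$ occurs at every block boundary, but it does \emph{not} prevent $ba$ from occurring inside some $\varphi(d)$, and recognizability in the sense of Moss\'e says nothing about where a fixed short word may occur; it only asserts that sufficiently long two-sided contexts determine block boundaries. For a concrete counterexample, take $\varphi:a\mapsto abaab,\ b\mapsto ab$, which is proper, primitive and non-periodic (the letter frequencies are irrational). Here $\varphi(a)=abaab$ contains $ba$ internally, and one checks that $\RR_F(ba)=\{ba,aba\}$, whereas your formula would give $u_a\,ba=baa\cdot ba=baaba$ and $u_b\,ba=ba$. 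The word $baaba$ is not even a return word to $ba$ (the word $ba\cdot baaba$ has an internal occurrence of $ba$). So $X_\varphi(a,b)=\{ab,aab\}\neq\varphi(A)=\{abaab,ab\}$, and the bijection $\iota:d\mapsto\varphi(d)$ you need simply does not exist.

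The paper's argument avoids Theorem~\ref{theoremPresentation} altogether and applies Proposition~\ref{lemma2.2} directly. The key observation specific to the proper case is that every $\varphi^\omega(d)$ lies in the single $\HH$-class $H=H_{ba}$ (they all have $\stackrel{\rightarrow}{\varphi^\omega(a)}$ as right-infinite part and $\stackrel{\leftarrow}{\varphi^\omega(b)}$ as left-infinite part), so $\varphi^\omega$ is a continuous homomorphism from $\widehat{A^*}$ onto $H$. Theorem~\ref{theorem5.6} makes $\varphi$ an automorphism of $H$, and the commuting square with $\pi=\varphi^\omega$ and $\Phi=\varphi^{\omega+1}$ (Figure~\ref{commutativeDiagram}) puts you in the situation of Proposition~\ref{lemma2.2}; since $(\varphi^{\omega+1})^\omega=\varphi^\omega$, the resulting relations are exactly $\varphi_G^\omega(a)=a$.
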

The proof uses Proposition~\ref{lemma2.2} applied with the  diagram
of Figure~\ref{commutativeDiagram}.
\begin{figure}[hbt]
\centering
\gasset{Nframe=n,Nadjust=wh}
\begin{picture}(15,15)
\node(Sl)(0,0){$H$}\node(Sr)(15,0){$H$}
\node(A^*l)(0,15){$\widehat{A^*}$}\node(A^*r)(15,15){$\widehat{A^*}$}

\drawedge(Sl,Sr){$\varphi$}\drawedge(A^*l,A^*r){$\varphi^{\omega+1}$}
\drawedge(A^*l,Sl){$\varphi^\omega$}\drawedge(A^*r,Sr){$\varphi^\omega$}
\end{picture}
\caption{A commutative diagram}\label{commutativeDiagram}
\end{figure}
\begin{example}\label{exampleMorphismAC6}
Let $A=\{a,b\}$ and let $\varphi:a\mapsto ab,b\mapsto a^3b$. The morphism $\varphi$
is proper. Thus, by Theorem~\ref{theoremPresentationProper}, the Sch\"utzenberger group of $J(\varphi)$ has the presentation
$\langle a,b\mid \varphi_G^\omega(a)=a,\varphi_G^\omega(b)=b\rangle$.
Since the image of $\F(A)$ by $\varphi$  is included in the subgroup
generated by words of length $2$, the relations
$\varphi_G^\omega(a)=a$ and $\varphi_G^\omega(b)=b$
are nontrivial and thus $G(F)$ is not a free profinite
group of rank two (it is actually not a free
profinite group, see~\cite[Example 7.2]{Almeida2005b}).
\end{example}

\subsection{Groups of bifix codes}

For an automaton $\A$, we denote by $\varphi_\A$ the natural morphism
from $A^*$ onto the transition monoid of $\A$.

Let $F$ be a  recurrent set. For any finite automaton $\A=(Q,i,T)$, we denote by $\rank_\A(F)$
the minimum of the ranks of the maps $\varphi_\A(w)$ for $w\in F$.
By~\cite[Proposition 3.2]{Perrin2015}, the set of elements 
of $\varphi_\A(F)$ of rank $\rank_\A(F)$
is included in a regular $\JJ$-class, called the $F$-\emph{minimal} $\JJ$-class
of the monoid $\varphi_\A(A^*)$ and denoted $J_\A(F)$. The structure group of this $\JJ$-class is
denoted $G_\A(F)$.

Let $X\subset A^+$ be a code. A \emph{parse}\index{parse} of
 a word $w\in A^*$ with respect to $X$ is a triple $(p,x,q)$ with $w=pxq$ such that
$p$ has no suffix in $X$, $x\in X^*$ and $q$ has no prefix in $X$.

A parse of a profinite word $w\in \widehat{A^*}$ with respect to $X$
is a triple $(p,x,q)$
with $w=pxq$ such that
$p$ has no suffix in $X$, $x\in \widehat{X^*}$ and $q$ has no prefix in $X$.

The number of parses of $w\in \widehat{A^*}$ with respect
to a finite maximal prefix code $X$ is equal to the number
of its prefixes which have no suffix in $X$. Indeed the
map $(p,x,q)\mapsto p$ assigning to each parse its first component
 is bijective by Proposition~\ref{propositionUnitary2}.

Let $F$ be a recurrent set. A bifix code\index{bifix code}\index{code!bifix}
 $X\subset F$ is $F$-\emph{maximal}
if it is not properly contained in any bifix code $Y\subset F$. 

A bifix code $X\subset F$ is $F$-\emph{thin}\index{thin} if there is a word $w\in F$ which is
not a factor of $X$. When $F$ is uniformly recurrent, a set $X\subset F$
is $F$-thin if and only if it is finite.

Let $F$ be a recurrent set. The $F$-\emph{degree} of a bifix code $X$, denoted
$d_X(F)$, is the maximal number of parses
of a word in $F$. A bifix code $X$ is $F$-maximal and $F$-thin if and
only if its $F$-degree is finite. In this case a word of $F$ has $d_X(F)$
parses if and only if it is not an internal factor of a word of $X$
(see~\cite[Theorem 4.2.8]{BerstelDeFelicePerrinReutenauerRindone2012}).
\begin{proposition}
Let $F$ be a uniformly
recurrent set and let $X$ be a finite $F$-maximal bifix code.
The number of parses of any element of $\bar{F}$ is equal to $d_X(F)$.
\end{proposition}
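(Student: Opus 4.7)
The plan is to deduce the result from the classical statement for long finite words in $F$ by a compactness argument in $\widehat{A^*}$. Set $d=d_X(F)$ and $L=\max_{x\in X}|x|$. Since $X$ is finite and $F$ is uniformly recurrent, $X$ is $F$-thin, so $d<\infty$; by the cited classical characterization, every word of $F$ of length at least $L$ has exactly $d$ parses. The set of parses of an arbitrary pseudoword is a compact subset of $\widehat{A^*}^3$, because each defining membership condition is clopen—namely $p\notin\widehat{A^*}X$ and $q\notin X\widehat{A^*}$ are complements of clopen sets (Proposition~\ref{propositionRecognizableClopen} applied to the recognizable sets $A^*X$ and $XA^*$), and $x\in\widehat{X^*}=\overline{X^*}$ is clopen by the same proposition applied to the recognizable submonoid $X^*$—while the equation $w=pxq$ is closed by continuity of multiplication.

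Fix $w\in\bar F$ and choose a sequence $w_n\in F$ with $w_n\to w$ and $|w_n|\ge L$; each $w_n$ has exactly $d$ parses, say $(p_{n,i},x_{n,i},q_{n,i})$ for $1\le i\le d$. For the lower bound, sequential compactness of $\widehat{A^*}^3$ gives, after passing to a subsequence, limits $(p_i,x_i,q_i)$ of each of the $d$ sequences; closedness of the three membership conditions, together with the equation $w_n=p_{n,i}x_{n,i}q_{n,i}$, ensures that each $(p_i,x_i,q_i)$ is a parse of $w$. Distinctness of the $d$ limit parses is the delicate step, since the $p_{n,i}$ may grow in length; it is handled by isolating each parse of $w_n$ inside a clopen box of $\widehat{A^*}^3$ (for instance by fixing sufficiently long portions of $w_n$ surrounding the parse), chosen so that the $d$ boxes remain pairwise disjoint along the subsequence, forcing the limits to lie in disjoint closures.

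For the upper bound, given a parse $(p,x,q)$ of $w$, choose clopen neighbourhoods $U\ni p$, $V\ni x$, $W\ni q$ contained in the three clopen conditions above. By continuity of multiplication $w\in UVW$, hence $w_n\in UVW$ for all large $n$; writing $w_n=p_n z_n q_n$ with $p_n\in U\cap A^*$, $z_n\in V\cap A^*=X^*$ (using $\widehat{X^*}\cap A^*=X^*$), and $q_n\in W\cap A^*$ yields a genuine parse of $w_n$. Two distinct parses of $w$ can be separated by disjoint $UVW$-boxes (since $\widehat{A^*}^3$ is Hausdorff), so they pull back to distinct parses of $w_n$ for large $n$; combined with the bound $d$ on parses of $w_n$, this forces $w$ to have at most $d$ parses. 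The main obstacle is the distinctness step of the lower bound, which is where the careful choice of isolating clopen boxes must be executed.
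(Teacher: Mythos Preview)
Your compactness approach is sound in spirit, but you are missing the one observation that makes the proof short and that the paper exploits: because $X$ is finite and $F$-maximal bifix, the outer components $p$ and $q$ of any parse of any word in $F$ are \emph{bounded in length}. Indeed, $p\in F$ has no suffix in $X$, and since $X$ is an $F$-maximal suffix code, every word of $F$ of length at least $L=\max_{x\in X}|x|$ has a suffix in $X$; hence $|p|<L$, and symmetrically $|q|<L$. So your worry that ``the $p_{n,i}$ may grow in length'' is unfounded. Once you notice this, the lower bound is immediate: the $p_{n,i}$ (and $q_{n,i}$) lie in a fixed finite set, so after passing to a subsequence they are eventually constant, and since for each fixed $n$ the $d$ values $p_{n,1},\ldots,p_{n,d}$ are pairwise distinct (the first component determines the parse by Proposition~\ref{propositionUnitary2}), the $d$ limits $p_1,\ldots,p_d$ are distinct as well. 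Your ``clopen box'' manoeuvre is not needed, and as written it is only a sketch: you do not explain how to choose the boxes so that they remain disjoint along the whole subsequence, which is precisely the point at issue.

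For the upper bound, the paper again uses the boundedness of $q$: the map $(p,x,q)\mapsto q$ is injective, and the possible $q$'s are finite suffixes of $w$ of length $<L$, of which there are at most $d$ (one for each length occurring among the proper prefixes of $X$ that lie in $F$). Your pull-back argument is more elaborate and has a genuine technical gap: from $w_n\in UVW$ you cannot directly conclude that $w_n$ factors as $p_n z_n q_n$ with the three factors in $U\cap A^*$, $V\cap A^*$, $W\cap A^*$ respectively, since a factorization $w_n=uvw'$ witnessing $w_n\in UVW$ could a priori involve infinite pseudowords $u,v,w'$. This is repairable (e.g.\ via Lemma~\ref{lemmaUseful}), but it is unnecessary once you use the finiteness of the $p$'s and $q$'s.
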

\begin{proof}
Let $w\in\bar{F}$ and let $(u_n)$ be a sequence of elements of $F$
converging to $w$. Since each long enough $u_n$ has  $d_X(F)$ parses,
we may assume that all $u_n$ have  $d_X(F)$ of parses.
We may then number the parses of $u_n$ as $(p_{n,i},x_{n,i},q_{n,i})$
in such a way that for fixed $i$, each sequence converges
to $(p_i,x_i,q_i)$. Since $X$ is finite, the sequences $(p_{n,i})$
and $q_{n,i}$ are ultimately constant and the sequence
$(x_{n,i})$ converges to some $x_i\in \widehat{A^*}$.
Thus $w$ has $d_X(F)$ parses. There cannot exist more than
$d_X(F)$ parses of a word in $\bar{F}$ since the
number of parses is equal to the number of suffixes
which are prefixes of $X$.
\end{proof}
Let $X$ be an $F$-thin and $F$-maximal bifix code. The $F$-degree
of $X$ is equal to the $F$-minimal rank of the minimal automaton $\A$ of $X^*$.
We denote by $\varphi_X$ the morphism $\varphi_\A$, by $J_X(F)$ the $\JJ$-class $J_\A(F)$ and by $G_X(F)$
the group $G_\A(F)$, called
the $F$-\emph{group} of $X$.
It is a permutation group of degree $d_X(F)$.

We recall that for any uniformly recurrent tree set $F$
a finite bifix code $X\subset F$ is $F$-maximal of $F$-degree $d$ if
and only if it is a basis of a subgroup of index $d$
(Finite Index Basis Theorem, see~\cite[Theorem 4.4]{BertheDeFeliceDolceLeroyPerrinReutenauerRindone2015}).

We prove the following result. It has the interesting feature that
the hypothesis made on  profinite objects has a consequence
on finite words.
\begin{theorem}\label{newTheorem}
Let $F$ be a uniformly recurrent set, let $Z$ be a goup code of degree $d$
and let $X=Z\cap F$.
Let $h:A^*\rightarrow G$ be the morphism from $A^*$ 
onto the syntactic monoid of $Z^*$. The restriction of $\hat{h}$
to a maximal subgroup of $J(F)$ is surjective if and only if the following properties hold.
\begin{enumerate}
\item[\rm (i)] $X$ is an $F$-maximal bifix code of $F$-degree $d$.
\item[\rm (ii)] $G_X(F)$ is isomorphic to $G$.
\item[\rm (iii)] The morphism $\hat{\varphi_X}$ maps 
each $\HH$-class of $J(F)$ which is a group onto $G_X(F)$.
\end{enumerate}
\end{theorem}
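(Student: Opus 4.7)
The plan is to leverage Theorem~\ref{theoremGene}, which reduces the surjectivity of $\hat{h}$ on a maximal subgroup of $J(F)$ to the combinatorial condition that $h(\RR_F(w)^*)$ is dense in, and hence (since $G$ is finite) equal to, $G$ for every $w\in F$. Both directions of the equivalence will then be obtained by comparing the structure of $\hat{h}$ with that of $\hat{\varphi_X}$ through the combinatorics of parses with respect to the bifix code $X=Z\cap F$.

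For the forward direction, assume the restriction of $\hat{h}$ to some maximal subgroup $H(x)$ of $J(F)$ is surjective. By Theorem~\ref{theoremGene}, $h(\RR_F(w)^*)=G$ for every $w\in F$. To derive (i), first note that $X$ is bifix because $Z$ is, and then argue that $X$ is $F$-maximal of $F$-degree $d$ by exhibiting, for every sufficiently long $u\in F$, exactly $d=\Card(G)$ parses of $u$ with respect to $X$: each parse is indexed by the $h$-image of a suitable prefix, and the surjectivity of $h$ on $\RR_F(w)^*$ guarantees that every element of $G$ actually arises, so that no $F$-parse with respect to $Z$ is ``lost'' when passing to $X$. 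For (ii), the transition monoid of the minimal automaton of $X^*$ admits, via the restriction of the syntactic morphism of $Z^*$, a natural surjective morphism whose restriction to the $F$-minimal $\JJ$-class yields a surjection $G_X(F)\to G$; a rank count, using that both groups act transitively on sets of cardinality $d$, shows this map is an isomorphism. For (iii), I apply Theorem~\ref{theoremPresentationTree}: the group $H(x)$ equals the closure of the subsemigroup generated by a limit return set $\RR$, and then $\hat{\varphi_X}(H(x))$ is the closure of $\varphi_X(\RR^*)$, which under the identification of (ii) is already dense in $G_X(F)$ because $\hat{h}$ sends $H(x)$ onto $G$.

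For the backward direction, assume (i), (ii), and (iii) hold. By (iii), $\hat{\varphi_X}$ is surjective from $H(x)$ onto $G_X(F)$, and by (ii) the latter is isomorphic to $G$. The remaining task is to verify that the composition of $\hat{\varphi_X}|_{H(x)}$ with the isomorphism $G_X(F)\cong G$ from (ii) coincides with $\hat{h}|_{H(x)}$. By continuity it suffices to check this equality on $A^*$, which follows from the fact that $X=Z\cap F$ and $Z^*=h^{-1}(1)$: the action of a word $u\in A^*$ on the states of the minimal automaton of $X^*$ lying in the $F$-minimal $\JJ$-class is, via the chosen identification, the translation by $h(u)$ in $G$.

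The principal obstacle is the subtle interplay between parses of an $F$-word with respect to the smaller code $X=Z\cap F$ and parses with respect to the original group code $Z$: the conditions on the outer components $p$ and $q$ of an $X$-parse are weaker than those for a $Z$-parse, so the $F$-degree of $X$ is not automatically equal to $d$. It is precisely the density of $h(\RR_F(w)^*)$ in $G$, supplied by Theorem~\ref{theoremGene}, that forces the two notions of parse to align on $F$ and makes the morphism $G_X(F)\to G$ well-defined, surjective, and of matching degree.
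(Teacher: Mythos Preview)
Your plan rests on Theorem~\ref{theoremGene} and Theorem~\ref{theoremPresentationTree}, but both of those results assume that $F$ is a \emph{non-periodic bounded} set, whereas Theorem~\ref{newTheorem} only assumes that $F$ is uniformly recurrent. The paper points out explicitly (see Section~\ref{sectionLimitReturn}) that there exist uniformly recurrent sets which are not bounded, so you cannot invoke those theorems here. This is not a detail you can patch later: the entire machinery of limit return sets and of the equality $H(x)=\overline{\RR^*}$ is set up under the boundedness hypothesis, and your argument for all three conditions (i), (ii), (iii) in the forward direction, as well as the converse, goes through it.

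The paper avoids this by arguing directly, without return words. Given a maximal subgroup $H(x)$ on which $\hat h$ is onto, it reads off the $d$ parses of $x$ with respect to $Z$ (hence $X$) straight from the $d$ preimages under $\hat h(x)$, which gives (i). For (ii) and (iii) it looks at the image $K=H(\hat\varphi_X(x))$ inside the $F$-minimal $\JJ$-class $J_X(F)$ of the finite transition monoid, and observes that there is a natural map $\alpha\colon K\to G$ making the triangle with $\hat h$ and $\hat\varphi_X$ commute; surjectivity of $\hat h|_{H(x)}$ then forces both $\alpha$ and $\hat\varphi_X|_{H(x)}$ to be onto, and the same diagram gives the converse. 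If you want to keep your return-word viewpoint, you would need either to add the boundedness hypothesis to the statement (which weakens it) or to prove the forward direction by a parse/diagram argument as the paper does and reserve the limit-return description for intuition.
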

\begin{proof}
Set $\varphi=\varphi_X$ and $M=\varphi(A^*)$. Let $(Q,i,i)$
be the minimal automaton of $Z^*$. Thus $G$ is a transitive
permutation group on the set $Q$ which has $d$ elements
and $h(Z^*)$ is the stabilizer of $i\in Q$.

By \cite[Theorem 4.2.11]{BerstelDeFelicePerrinReutenauerRindone2012}, 
since $F$ is recurrent,
the set $X$ is an $F$-thin $F$-maximal bifix code of $F$-degree at most $d$.
Since $F$ is uniformly recurrent, $X$ is finite.

Let $x\in J(F)$ be such that $H=H(x)$ is a group such that the restriction
of
$\hat{h}$ to $H(x)$ is surjective.

Since $\hat{h}$ maps $H(x)$ onto $G$, the pseudoword $x$
has $d$ parses with respect to $Z$ and thus with respect to $X$.
Indeed, for any $p\in Q$, $\hat{h}(x)$ sends $p$ on some $q\in Q$.
Then $x$ has the
interpretation $(u,v,w)$  with $ph(u)=i$, $i\hat{h}(v)=i$ and $ih(w)=q$.
This implies that $d_X(F)=d$ and proves (i).

It is clear that $\hat{\varphi}(J(F))$ is contained in $J_X(F)$
since $J_X(F)$ contains the image by $\varphi$ of every long enough
word of $F$. Thus $\hat{\varphi}(x)$ is in $J_X(F)\cap\varphi(F)$ and its
$\HH$-class $K$ is a group.

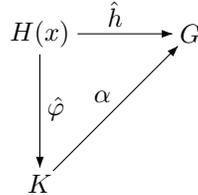
\begin{figure}[hbt]
\gasset{Nadjust=wh,Nframe=n}
\centering

\begin{picture}(20,20)
\node(A)(0,20){$H(x)$}\node(G)(20,20){$G$}
\node(M)(0,0){$K$}

\drawedge(A,G){$\hat{h}$}\drawedge(A,M){$\hat{\varphi}$}
\drawedge(M,G){$\alpha$}
\end{picture}

\caption{The reduction onto $G$.}\label{figureDiagram}
\end{figure}
Let $w$ be a word in $F$ and not a factor of $X$
such that $\varphi(w)\in K$. Let $P$ be the set
of suffixes of $w$ which are proper prefixes of $X$.
Since  $\Card(P)=d_X(F)$,
 $K$ is a permutation group on the set $\{i\cdot p\mid p\in P\}$ which is identified
by an isomorphism $\alpha$
with  a subgroup of $G$.

 If
the map $\hat{h}$ is surjective from $H(x)$ onto $G$, the commutativity
of the diagram in Figure~\ref{figureDiagram} forces $\alpha$
to be surjective. Moreover $\hat{\varphi}$ maps $H(x)$ onto $K$. The converse
is also true.
\end{proof}

In the case where $F$ is a tree set, the hypothesis of
Theorem~\ref{newTheorem} is satisfied by assertion 1
in Theorem~\ref{corollary2}. The conclusion  of
Theorem~\ref{newTheorem} is
implied by assertion 2.
\begin{example}\label{exampleDegree2}
Let $A=\{a,b\}$ and let $Z=A^2$. Let $F$ be the Fibonacci set.
Then $X=\{aa,ab,ba\}$. The group $G_X(F)$ is the cyclic group of
order $2$, in agreement with the fact
that $X$ generates  the kernel of the morphism from $\F(A)$ onto
$\Z/2\Z$ sending $a,b$ to $1$.
The minimal automaton of $X^*$ is shown in Figure~\ref{figureAutomaton}
on the left
and the $0$ minimal ideal of its transition monoid $M$ is represented
on the right.
\begin{figure}[hbt]
\centering
\gasset{Nadjust=wh}
\begin{picture}(80,20)
\put(0,0){
\begin{picture}(20,20)
\node[Nmarks=if,fangle=180](1)(0,10){$1$}\node(2)(20,20){$2$}\node(3)(20,0){$3$}
\drawedge[curvedepth=3](1,2){$a,b$}\drawedge[curvedepth=3,ELside=r](2,1){$a$}
\drawedge[curvedepth=3](1,3){$b$}\drawedge[curvedepth=3](3,1){$a$}
\end{picture}
}
\put(40,10){
$
\def\rb{\hspace{2pt}\raisebox{0.8ex}{*}}\def\vh{\vphantom{\biggl(}}
    \begin{array}%
    {r|@{}l@{}c|@{}l@{}c|}%
    \multicolumn{1}{r}{}&\multicolumn{2}{c}{1,2}&\multicolumn{2}{c}{1,3}\\
    \cline{2-5}
    1/2,3& \vh\rb &a &\vh \rb &ab \\
    \cline{2-5}
    1/2&\vh\rb&ba &\vh&b\\
    \cline{2-5}
    \end{array}
$
}
\end{picture}
\caption{The minimal automaton of $X^*$ and the $F$-minimal $\DD$-class.}
\label{figureAutomaton}
\end{figure}
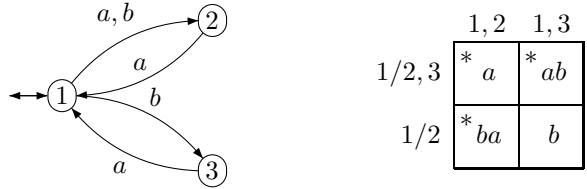
\end{example}
Note that, in the above example, the $F$-minimal $\DD$-class $D$ of $M$
is the image of the $\JJ$-class $J(F)$.
The following example shows that this may be
true although the image of $J(F)$ in the monoid $M$ is strictly 
included in $D$.
\begin{example}\label{exampleMorphismAC7}
Let $F$ be the set of factors of the fixed point of the morphism
$\varphi:a\mapsto ab,b\mapsto a^3b$ (as in Example~\ref{exampleMorphismAC}).
The set $F\cap A^2$ is the same as in Example~\ref{exampleDegree2}
and the $F$-minimal $\DD$-class is also the same. However,
$J(F)$ contains maximal groups
 formed of words of even length and thus its
image in $M$ is aperiodic, that is has trivial subgroups.
\end{example}

We now deduce from Corollary~\ref{corollaryPresentation} the following statement which gives
information on the groups $G_X(F)$ when $X$ is an $F$-maximal bifix
code in a set $F$ which is not a tree set. It would
be interesting to have a direct proof of this statement
which does not use profinite semigroups.

\begin{theorem}\label{theoremGroupCode}
Let $\varphi$ be a primitive non-periodic substitution over the alphabet $A$
and let $F$ be the set of factors of a fixed point of $\varphi$.  Let
$Z$ be a group code of degree $d$ on $A$ and let $h$ be the morphism from $A^*$ onto
the syntactic monoid of $Z^*$. Set $X=Z\cap F$. If $\varphi$ has finite $h$-order, then
$X$ is an $F$-maximal bifix code of $F$-degree $d$ and $G_X(F)$ is isomorphic to $G$.
\end{theorem}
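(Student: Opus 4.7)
The plan is to deduce this theorem essentially as a one-line combination of Corollary~\ref{corollaryPresentation} and Theorem~\ref{newTheorem}. The two hypotheses in the statement---$\varphi$ primitive non-periodic with fixed-point factor set $F$, and $\varphi$ of finite $h$-order---feed respectively into the setup of the two results, so the work is in checking that the pieces line up and that the hypotheses of Theorem~\ref{newTheorem} are met.

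First I would record that $F$ is uniformly recurrent, which is automatic because a fixed point of a primitive substitution is uniformly recurrent; this places us in the setup required by Theorem~\ref{newTheorem}. Next I would apply Corollary~\ref{corollaryPresentation}: the assumption that $\varphi$ has finite $h$-order is exactly equivalent to the restriction of the continuous extension $\hat{h}\colon\widehat{A^*}\to G$ to any maximal subgroup of $J(\varphi)$ being surjective. Since $J(\varphi)$ is the $\JJ$-class of the infinite pseudowords in $\overline{F(\varphi)}=\bar F$---i.e.\ it coincides with $J(F)$ in the sense of Section~\ref{factorialSets}---this is precisely the hypothesis of Theorem~\ref{newTheorem} for the data $(F,Z,X,h,G)$ we are given.

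At that point Theorem~\ref{newTheorem} delivers conclusions (i)--(iii), the first two of which are exactly the two assertions of the theorem: $X=Z\cap F$ is an $F$-maximal bifix code of $F$-degree $d$, and $G_X(F)\cong G$. Conclusion (iii), namely that $\hat{\varphi_X}$ maps each group $\HH$-class of $J(F)$ onto $G_X(F)$, is not required here but comes for free.

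I do not foresee a substantive obstacle. The only point requiring mild care is the identification ``$J(\varphi)=J(F)$'' and the matching of ``maximal subgroup of $J(\varphi)$'' in Corollary~\ref{corollaryPresentation} with ``maximal subgroup of $J(F)$'' in Theorem~\ref{newTheorem}; once this identification is spelled out, the theorem reduces to a direct citation. The genuine mathematical content---the equivalence between finite $h$-order of $\varphi$ and surjectivity of $\hat h$ on maximal subgroups of $J(F)$, and the transfer of this surjectivity to the properties of the bifix code $X=Z\cap F$---is already encapsulated in the two results being invoked.
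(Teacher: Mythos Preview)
Your proposal is correct and follows essentially the same route as the paper: invoke Corollary~\ref{corollaryPresentation} to turn the finite $h$-order hypothesis into surjectivity of $\hat h$ on maximal subgroups of $J(\varphi)=J(F)$, and then read off conclusions (i) and (ii) of Theorem~\ref{newTheorem}. The paper's proof is literally a one-sentence citation of these two results, and your only addition is spelling out the identification $J(\varphi)=J(F)$ and the uniform recurrence of $F$, which is harmless.
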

\begin{proof}
By Corollary~\ref{corollaryPresentation}, the hypothesis of
Theorem~\ref{newTheorem} is satisfied and thus the conclusion using
conditions (i) and (ii).
\end{proof}

\begin{example}\label{exampleMorphismAC8}
Let $F$ and $\varphi$ be as in Example~\ref{exampleMorphismAC2}.
We consider, as in~\cite{AlmeidaCosta2013}, the morphim $h:A^*\rightarrow A_5$ from $A^*$ onto
the alternating group of degree $5$ defined by
$h:a\mapsto (123),b\mapsto (345)$. We have seen in Example~\ref{exampleMorphismAC5}
that $\varphi$ has $h$-order $12$ and thus, by Corollary~\ref{corollaryPresentation},
$\hat{h}$ induces a surjective map from any maximal subgroup of $J(\varphi)$
onto $A_5$. 

Let $Z$ be the bifix code
generating the submonoid stabilizing $1$ and let $X=Z\cap F$.
The $F$-maximal bifix code $X$ has $8$ elements. It is represented in Figure~\ref{figureBifixDegree5}
with the states of the minimal automaton indicated on its prefixes.
\begin{figure}[hbt]
\centering\gasset{Nadjust=wh,AHnb=0}
\begin{picture}(100,40)(0,-10)
\node(1)(0,5){$1$}
\node(a)(10,12){$2$}\node[Nmr=0](b)(10,0){$1$}
\node(aa)(20,17){$3$}\node(ab)(20,5){$4$}
\node[Nmr=0](aaa)(30,22){$1$}\node(aab)(30,14){$5$}\node(aba)(30,5){$6$}
\node(aaba)(40,14){$7$}\node[Nmr=0](abaa)(40,8){$1$}\node(abab)(40,0){$8$}
\node(aabab)(50,14){$9$}\node(ababa)(50,0){$10$}
\node(aababa)(60,14){$11$}\node(ababaa)(60,5){$12$}\node(ababab)(60,-5){$9$}
\node(aababaa)(70,19){$14$}\node(aababab)(70,10){$15$}
\node(ababaaa)(70,5){$16$}\node(abababa)(70,-5){$11$}
\node(aababaaa)(80,19){$17$}\node[Nmr=0](aabababa)(80,10){$1$}
\node(ababaaab)(80,5){$14$}\node(abababaa)(80,0){$14$}\node(abababab)(80,-10){$15$}
\node(aababaaab)(90,19){$15$}\node(ababaaaba)(90,5){$17$}\node(abababaaa)(90,0){$17$}\node[Nmr=0](ababababa)(90,-10){$1$}
\node[Nmr=0](aababaaaba)(100,19){$1$}
\node(ababaaabab)(100,5){$15$}\node(abababaaab)(100,0){$15$}
\node[Nmr=0](ababaaababa)(110,5){$1$}\node[Nmr=0](abababaaaba)(110,0){$1$}

\drawedge(1,a){$a$}\drawedge(1,b){$b$}
\drawedge(a,aa){$a$}\drawedge(a,ab){$b$}
\drawedge(aa,aaa){$a$}\drawedge(aa,aab){$b$}\drawedge(ab,aba){$a$}
\drawedge(aab,aaba){$a$}\drawedge(aba,abaa){$a$}\drawedge(aba,abab){$b$}
\drawedge(aaba,aabab){$b$}\drawedge(abab,ababa){$a$}
\drawedge(aabab,aababa){$a$}\drawedge(ababa,ababaa){$a$}\drawedge(ababa,ababab){$b$}
\drawedge(aababa,aababaa){$a$}\drawedge(aababa,aababab){$b$}
\drawedge(ababaa,ababaaa){$a$}\drawedge(ababab,abababa){$a$}
\drawedge(aababaa,aababaaa){$a$}\drawedge(aababab,aabababa){$a$}
\drawedge(ababaaa,ababaaab){$b$}\drawedge(abababa,abababaa){$a$}\drawedge(abababa,abababab){$b$}
\drawedge(aababaaa,aababaaab){$b$}
\drawedge(ababaaab,ababaaaba){$a$}\drawedge(abababaa,abababaaa){$a$}\drawedge(abababab,ababababa){$a$}
\drawedge(aababaaab,aababaaaba){$a$}
\drawedge(ababaaaba,ababaaabab){$b$}\drawedge(abababaaa,abababaaab){$b$}
\drawedge(ababaaabab,ababaaababa){$a$}\drawedge(abababaaab,abababaaaba){$a$}
\end{picture}
\caption{The bifix code $X$.}\label{figureBifixDegree5}
\end{figure}
In agreement with Theorem~\ref{theoremGroupCode}, the $F$-degree of $X$ is $5$. 
The $F$-minimal $\DD$-class is represented in Figure~\ref{figureFMinimalDClass}.
\begin{figure}[hbt]
\begin{picture}(120,80)(0,-40)
${\footnotesize
\def\rb{\hspace{2pt}\raisebox{0.8ex}{*}}\def\vh{\vphantom{\biggl(}}
    \begin{array}%
    {r|@{}l@{}c|@{}l@{}c|@{}l@{}c|@{}l@{}c|@{}l@{}c|@{}l@{}c|}%
    \multicolumn{1}{r}{}&\multicolumn{2}{c}{1,2,3,16,17}&\multicolumn{2}{c}{1,4,5,14,15}&\multicolumn{2}{c}{1,2,6,7,17}&
\multicolumn{2}{c}{1,4,8,9,15}&\multicolumn{2}{c}{1,2,6,10,11}&\multicolumn{2}{c}{1,2,3,12,14}\\
    \cline{2-13}
    1/2,4/3,6,15/& \vh &a^3 &\vh  &a^3b &\vh&a^3ba&\vh\rb&a^3bab&&&&\\
    8/9             &     &    &     &     &   &     &      &     &&&&\\
    \cline{2-13}
    1/2/11,17/6&\vh&ba^3 &\vh&&\vh\rb&&&&&&&\\
    7,9,10     &   &     &   &&      &&&&&&&\\
    \cline{2-13}
    1/3,6,15/9,14&\vh&aba^3&\vh\rb&&\vh&&&&&&&\\
    5,8/4        &   &     &      &&   &&&&&&&\\
    \cline{2-13}
    1/11,17/7,16&\vh\rb&baba^3&&&&&&bab&&&&\\
    3,6/2       &      &      &&&&&&   &&&&\\ 
    \cline{2-13}
    1/2,4/9,14&&&\vh\rb&&&&&&&&\vh\rb&\\
    3,6,15/5,12              &&&      &&&&&&&&      &\\
    \cline{2-13}
    1/2,4/3,6,15&&&&&&&&&\vh\rb&&&\\
    10/11       &&&&&&&&&      &&&\\
    \cline{2-13}
    \end{array}
}$
\end{picture}
\caption{The $F$-minimal $\DD$-class.}\label{figureFMinimalDClass}
\end{figure}
The word $a^3$ has rank $5$ and $\RR_F(a^3)=\{babaaa,babababaaa\}$.
The corresponding permutations defined on the image $\{1,2,3,16,17\}$ of $a^3$
are respectively
\begin{displaymath}
(1,2,16,3,17)\quad (1,17,16,2,3)
\end{displaymath}
which generate $A_5$.
\end{example}

The next example is from~\cite{Perrin2015}.
\begin{example}\label{exampleMorseTrivialGroup}
Let $F$ be the Thue-Morse set and let $\A$ be the automaton
represented in Figure~\ref{figMorse} on the left.
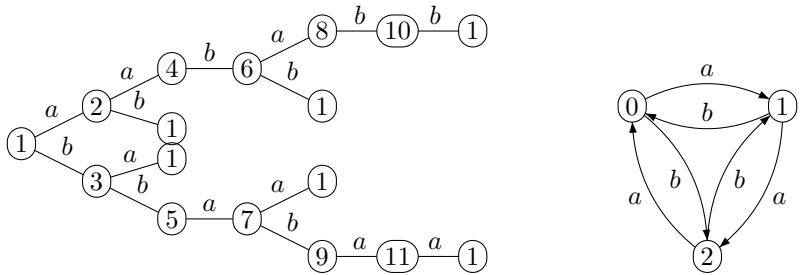
\begin{figure}[hbt]
\centering
\gasset{AHnb=0,Nadjust=wh}
\begin{picture}(120,35)
\put(0,0){
\begin{picture}(60,35)
\node(1)(0,15){$1$}\node(a)(10,20){$2$}\node(b)(10,10){$3$}
\node(aa)(20,25){$4$}\node(ab)(20,17){$1$}
\node(ba)(20,13){$1$}\node(bb)(20,5){$5$}
\node(aab)(30,25){$6$}\node(aaba)(40,30){$8$}\node(aabb)(40,20){$1$}
\node(aabab)(50,30){$10$}\node(aababb)(60,30){$1$}
\node(bba)(30,5){$7$}\node(bbaa)(40,10){$1$}\node(bbab)(40,0){$9$}
\node(bbaba)(50,0){$11$}\node(bbabaa)(60,0){$1$}

\drawedge(1,a){$a$}\drawedge(1,b){$b$}
\drawedge(a,aa){$a$}\drawedge(a,ab){$b$}
\drawedge(b,ba){$a$}\drawedge(b,bb){$b$}
\drawedge(aa,aab){$b$}\drawedge(aab,aaba){$a$}\drawedge(aab,aabb){$b$}
\drawedge(aaba,aabab){$b$}\drawedge(aabab,aababb){$b$}
\drawedge(bb,bba){$a$}\drawedge(bba,bbaa){$a$}\drawedge(bba,bbab){$b$}
\drawedge(bbab,bbaba){$a$}\drawedge(bbaba,bbabaa){$a$}
\end{picture}
}
\put(80,0){
\gasset{AHnb=1}
\begin{picture}(30,25)
\node(0)(0,20){$0$}\node(1)(20,20){$1$}\node(2)(10,0){$2$}

\drawedge[curvedepth=3](0,1){$a$}\drawedge[curvedepth=3](1,2){$a$}
\drawedge[curvedepth=3](2,0){$a$}
\drawedge[curvedepth=3,ELside=r](0,2){$b$}\drawedge[curvedepth=3,ELside=r](2,1){$b$}
\drawedge[curvedepth=3,ELside=r](1,0){$b$}
\end{picture}
}
\end{picture}
\caption{An automaton of $F$-degree $3$ with trivial
  $F$-group}
\label{figMorse}
\end{figure}
The word $aa$ has rank $3$ and image $I=\{1,2,4\}$.
The action on the images accessible from $I$
is given in Figure~\ref{figActionMorse}.
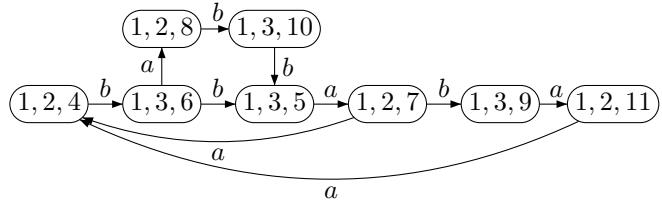
\begin{figure}[hbt]
\centering
\gasset{Nadjust=wh}
\begin{picture}(100,25)
\node(124)(0,10){$1,2,4$}\node(136)(15,10){$1,3,6$}
\node(135)(30,10){$1,3,5$}\node(127)(45,10){$1,2,7$}
\node(139)(60,10){$1,3,9$}\node(1211)(75,10){$1,2,11$}
\node(128)(15,20){$1,2,8$}\node(1310)(30,20){$1,3,10$}

\drawedge(124,136){$b$}\drawedge(136,135){$b$}\drawedge(135,127){$a$}
\drawedge(127,139){$b$}\drawedge(139,1211){$a$}
\drawedge[curvedepth=10](1211,124){$a$}
\drawedge(136,128){$a$}\drawedge(128,1310){$b$}\drawedge(1310,135){$b$}
\drawedge[curvedepth=5](127,124){$a$}
\end{picture}
\caption{The action on the minimal images}\label{figActionMorse}
\end{figure}
All words with image $\{1,2,4\}$ end with $aa$.  The paths
returning for the first time to $\{1,2,4\}$ are labeled by the set
$\RR_F(aa)=\{b^2a^2,bab^2aba^2,bab^2a^2,b^2aba^2\}$. Thus $\rank_\A(F)=3$ by~\cite[Theorem 3.1]{Perrin2015}.
Moreover each of the
words of $\RR_F(a^2)$ defines the trivial permutation on the set $\{1,2,4\}$.
Thus  $G_\A(F)$ is trivial. 

The fact that $d_\A(F)=3$ and that $G_\A(F)$ is trivial can be seen directly
as follows. Consider the group automaton $\B$ represented in Figure~\ref{figMorse} on the right and corresponding to the map sending each word to the
difference
modulo $3$ of the number of occurrences of $a$ and $b$. There is a reduction $\rho$ from $\A$ onto $\B$ such that $1\mapsto 0$,
$2\mapsto 1$, and $4\mapsto 2$. This accounts for the fact that
$d_\A(S)=3$. Moreover, one may verify that
 any return word $x$ to $a^2$ has equal number of $a$ and $b$ (if $x=uaa$ then $aauaa$
is in $F$, which implies that $aua$ and thus $uaa$ have the same number of $a$
and $b$). This implies that the permutation
$\varphi_\B(x)$ is the identity, and therefore also the restriction of
$\varphi_\A(x)$ to $I$. 
\end{example}
\begin{example}\label{exampleThueMorse8}
Consider again the Thue-Morse substitution $\tau$
and the Thue-Morse set $F$ as in Example~\ref{exampleThueMorse4}.
Let $h$ be the morphism
$h:a\mapsto (123),b\mapsto (345)$ from $A^*$ onto the alternating group $A_5$
(already used in Example~\ref{exampleMorphismAC3}). One may verify that $\tau$
has $h$-order $6$ and thus, by Corollary~\ref{corollaryPresentation}, $h$ extends to a surjective continuous morphim from
any maximal subgroup of $J(\varphi)$ onto $A_5$. 

 Let $Z$ be the group code
generating the submonoid stabilizing $1$ and let $X=Z\cap F$.
The $F$-maximal bifix code $X$ is represented in Figure~\ref{figureBifixDegree5Morse}.
We represent in Figure~\ref{figureBifixDegree5Morse} only the nodes corresponding
to right special words, that is, vertices with two sons.
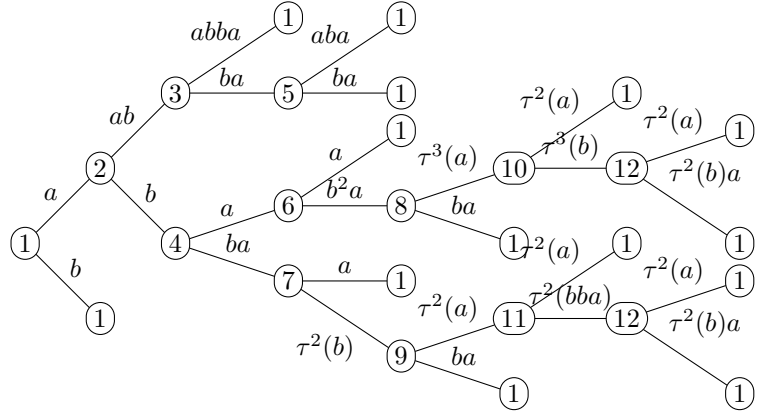
\begin{figure}[hbt]
\centering\gasset{Nadjust=wh,AHnb=0}
\begin{picture}(80,50)(0,-10)
\node(1)(0,10){$1$}
\node(2)(10,20){$2$}\node(b)(10,0){$1$}
\node(3)(20,30){$3$}\node(4)(20,10){$4$}
\node(x1)(35,40){$1$}\node(5)(35,30){$5$}
\node(6)(35,15){$6$}\node(7)(35,5){$7$}
\node(x2)(50,40){$1$}\node(x3)(50,30){$1$}
\node(x4)(50,25){$1$}\node(8)(50,15){$8$}
\node(x9)(50,5){$1$}\node(9)(50,-5){$9$}
\node(10)(65,20){$10$}\node(x8)(65,10){$1$}
\node(11)(65,0){$11$}\node(x13)(65,-10){$1$}
\node(x5)(80,30){$1$}\node(12)(80,20){$12$}
\node(x10)(80,10){$1$}\node(12b)(80,0){$12$}
\node(x6)(95,25){$1$}\node(x7)(95,10){$1$}
\node(x11)(95,5){$1$}\node(x12)(95,-10){$1$}

\drawedge(1,2){$a$}\drawedge(1,b){$b$}
\drawedge(2,3){$ab$}\drawedge(2,4){$b$}
\drawedge(3,x1){$abba$}\drawedge(3,5){$ba$}
\drawedge(4,6){$a$}\drawedge(4,7){$ba$}
\drawedge(5,x2){$aba$}\drawedge(5,x3){$ba$}
\drawedge(6,x4){$a$}\drawedge(6,8){$b^2a$}
\drawedge(7,x9){$a$}\drawedge[ELside=r](7,9){$\tau^2(b)$}
\drawedge(8,10){$\tau^3(a)$}\drawedge(8,x8){$ba$}
\drawedge(9,11){$\tau^2(a)$}\drawedge(9,x13){$ba$}
\drawedge(10,x5){$\tau^2(a)$}\drawedge(10,12){$\tau^3(b)$}
\drawedge(11,x10){$\tau^2(a)$}\drawedge(11,12b){$\tau^2(bba)$}
\drawedge(12,x6){$\tau^2(a)$}\drawedge(12,x7){$\tau^2(b)a$}
\drawedge(12b,x11){$\tau^2(a)$}\drawedge(12b,x12){$\tau^2(b)a$}
\end{picture}
\caption{The bifix code $X$.}\label{figureBifixDegree5Morse}
\end{figure}

The image of $\tau^4(b)$ is $\{1,3,4,9,10\}$ and thus it is minimal.
The action on its image is shown in Figure~\ref{figureMinimalImages}.
The return words to $\tau^4(b)$ are $\tau^4(b),\tau^3(a)$ and
$\tau^5(ab)$. The permutations on the image of $\tau^4(b)$ are the $3$
cycles of length $5$ indicated in Figure~\ref{figureMinimalImages}.
Since they generate the group $A_5$, we have $G_X(F)=A_5$.

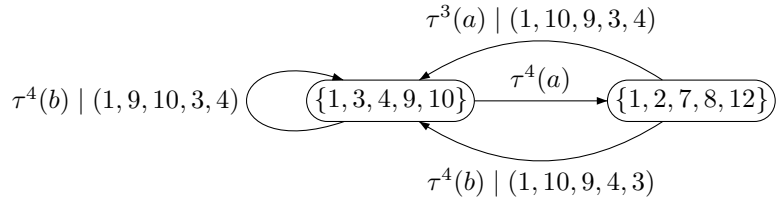
\begin{figure}[hbt]
\centering\gasset{Nadjust=wh}
\begin{picture}(50,20)(-20,0)
\node(1)(0,10){$\{1,3,4,9,10\}$}\node(2)(40,10){$\{1,2,7,8,12\}$}

\drawloop[loopangle=180](1){$\tau^4(b)\mid (1,9,10,3,4)$}
\drawedge(1,2){$\tau^4(a)$}
\drawedge[curvedepth=-8,ELside=r](2,1){$\tau^3(a)\mid (1,10,9,3,4)$}
\drawedge[curvedepth=8](2,1){$\tau^4(b)\mid (1,10,9,4,3)$}
\end{picture}
\caption{The action on the minimal images.}\label{figureMinimalImages}
\end{figure}

\end{example}
\bibliographystyle{plain}
\bibliography{profinite}
\printindex

\end{document}